\documentclass[11pt]{amsart}
\usepackage{amsmath, amssymb,amsthm}
\usepackage[hidelinks]{hyperref}
\usepackage[all]{xy}
\usepackage{enumerate,enumitem}
\usepackage{comment}
\usepackage{MnSymbol} 
\usepackage{mathrsfs}

\textheight=615pt
\textwidth=445pt
\oddsidemargin=.25in
\evensidemargin=.25in
\topmargin=0in

\headheight=.1in
\headsep=.5in
\footskip=.75in

\theoremstyle{plain}
\newtheorem{thm}{Theorem}[section]
\newtheorem{prop}[thm]{Proposition}
\newtheorem{lemma}[thm]{Lemma}
\newtheorem{cor}[thm]{Corollary}
\newtheorem{theoremletter}{Theorem}

\theoremstyle{definition}
\newtheorem{defn}[thm]{Definition}
\theoremstyle{remark}
\newtheorem{rem}[thm]{Remark}

\newcommand{\R}{\mathbb{R}}
\newcommand{\C}{\mathbb{C}}
\newcommand{\Q}{\mathbb{Q}}
\newcommand{\Z}{\mathbb{Z}}

\newcommand{\G}{\mathrm{G}}
\newcommand{\I}{\mathrm{I}}

\newcommand{\cD}{\mathcal{D}}
\newcommand{\cE}{\mathcal{E}}
\newcommand{\cF}{\mathcal{F}}
\newcommand{\cG}{\mathcal{G}}
\newcommand{\cO}{\mathcal{O}}
\newcommand{\cR}{\mathcal{R}}
\newcommand{\cT}{\mathcal{T}}
\newcommand{\cU}{\mathcal{U}}
\newcommand{\cV}{\mathcal{V}}
\newcommand{\cW}{\mathcal{W}}

\newcommand{\gm}{\mathfrak{m}}

\newcommand{\cC}{\mathscr{C}}
\newcommand{\cL}{\mathscr{L}}

\DeclareMathOperator{\ad}{ad} 

\DeclareMathOperator{\cusp}{cusp}
 
\DeclareMathOperator{\End}{End}
\DeclareMathOperator{\eis}{eis}
\DeclareMathOperator{\full}{full}
\DeclareMathOperator{\Frob}{Frob}

\DeclareMathOperator{\Gal}{Gal}
\DeclareMathOperator{\GL}{GL}
\DeclareMathOperator{\rH}{H}

\DeclareMathOperator{\Hom}{Hom}
\DeclareMathOperator{\Ext}{Ext}
\DeclareMathOperator{\nord}{n.ord}
\DeclareMathOperator{\ord}{ord}
\DeclareMathOperator{\red}{red}
\DeclareMathOperator{\rig}{rig}
\DeclareMathOperator{\res}{res}
\DeclareMathOperator{\ps}{ps}
\DeclareMathOperator{\tr}{tr}
\DeclareMathOperator{\univ}{univ}

\begin{document}

\title[weight $1$ Eisenstein points of the eigencurve]{On the failure of Gorensteinness at weight $1$ Eisenstein points of the eigencurve}
\author{Adel Betina, Mladen Dimitrov and Alice Pozzi}

\address{University of Vienna, Faculty of Mathematics,  Oskar-Morgenstern-Platz 1, 1090 Wien, Austria}
\email{adelbetina@gmail.com }

\address{University of Lille, CNRS, UMR 8524 -- Laboratoire Paul Painlev\'e, 59000 Lille, France}
\email{mladen.dimitrov@univ-lille.fr}

\address{Imperial College London, Department of Mathematics,  London SW7 2RH, United Kingdom}
\email{a.pozzi@imperial.ac.uk}

\thanks{The first author's acknowledges  support from the ERC  Horizon 2020 research and innovation programme (grant agreement n$^\circ$682152) and from the EPSRC (grant EP/R006563/1). The second author is partially supported by the ``Agence Nationale de la Recherche'' (grants ANR-11-LABX-0007-01, ANR-16-IDEX-0004 and ANR-18-CE40-0029). The third author has received partial support from 
ERC Consolidator Grant ``Euler systems and the Birch--Swinnerton--Dyer conjecture''. 
}

\begin{abstract}
We prove that the cuspidal  eigencurve $\cC_{\cusp}$ is \'etale over the weight space at any 
classical weight $1$ Eisenstein point $f$ and  meets two Eisenstein components of the eigencurve $\cC$  transversally  at $f$. Further, we prove that the local ring of $\cC$ at $f$ is
Cohen--Macaulay but not Gorenstein and compute the Fourier coefficients of a basis of overconvergent weight $1$  modular forms lying in the same   generalised eigenspace as $f$. In addition, we prove an $R=T$ theorem for the local ring at $f$ of the closed subspace of $\cC$ given by the union of $\cC_{\cusp}$ and one Eisenstein component and 
 prove unconditionally, via a geometric construction of the residue map, that  the corresponding congruence ideal is generated by the Kubota--Leopoldt $p$-adic $L$-function. Finally we obtain a new proof of the Ferrero--Greenberg Theorem and Gross' formula for the derivative 
of the  $p$-adic $L$-function  at the trivial zero.
\end{abstract}

\maketitle

\begin{small}
\tableofcontents
\end{small}

\section*{Introduction}

\addtocontents{toc}{\setcounter{tocdepth}{2}}

Let $p$ be any prime number and let  $N$ be a positive integer relatively prime to $p$. 
Let  $\cC$ be  the  reduced eigencurve of tame level $N$ endowed with a  flat and locally finite morphism 
$\kappa$ to the weight space $\cW$ (see \S\ref{basic-eigencurve}). 
The irreducible components of $\cC$ correspond either to Eisenstein or  to 
cuspidal  $p$-adic families of modular forms,  and the study of how these  meet plays a prominent 
role in the work  \cite{mazur-wiles} of Mazur and Wiles on the Iwasawa Main Conjecture for $\GL_1$ over $\Q$.
Any $p$-stabilisation of a newform of level $N$  has finite slope, thus defines a point in $\cC$ (see \cite[\S1.3]{bellaiche-Inv}).  The point  is  called {\it irregular} if the 
$p$-stabilisation is  unique,  and  {\it  regular} otherwise. The main theorem in \cite{bellaiche-dimitrov} describes the geometry of $\cC$  at all regular cuspidal weight $1$ points  ($\cC$ is smooth at such points and there is a precise condition for  $\kappa$ to be \'etale) and one expects the geometry at
 irregular cuspidal points to be more involved, because $\kappa$ is  never \'etale at such points and there are even examples  where $\cC$ is  not  smooth (see  \cite{dim-ghate}  and \cite{betina-dimitrov}).  
 
 In this paper  we study  the geometry of $\cC$ at classical weight $1$  Eisenstein points. 
Fix  a primitive  odd Dirichlet character $\phi$  of conductor $N$
 and consider the  Eisenstein series 
\begin{align}\label{eq:def-eisenstein}
 E_1(\mathbf{1}, \phi)(z)=  \frac{L(\phi,0)}{2} + \sum_{n \geqslant 1 } q^n\sum_{d \mid n } \phi(d)  \text{,  where }  q=e^{2i\pi z}.
 \end{align}
 It is a newform of level $N$ admitting the $p$-stabilisations 
 \begin{align*}
 E_1(\mathbf{1}, \phi)(z)-\phi(p)E_1(\mathbf{1}, \phi)(pz) \qquad  \text{and}\qquad  E_1(\mathbf{1}, \phi)(z)-E_1(\mathbf{1}, \phi)(pz), 
 \end{align*}
 of $U_p$-eigenvalues $1$ and $\phi(p)$, respectively, belonging to the Eisenstein components $\cE_{\mathbf{1}, \phi}$ and  $\cE_{\phi, \mathbf{1}}$. In particular, these two components
 intersect at weight $1$ if and only if $\phi(p)=1,$ 
 {\it i.e.} when  $ E_1(\mathbf{1}, \phi)$ is irregular. 
 If $\phi(p) \ne 1$, the constant term of  each one of these $p$-stabilisations is non-zero at some cusps in the multiplicative part of the ordinary locus of the modular curve  $X^{\rig}_{\mathrm{Iw}}$ of tame level $N$ and Iwahori level at $p$ (see \S\ref{basic-eigencurve}), hence such forms are not cuspidal-overconvergent and belong to a unique Eisenstein component.
 We thus restrict our attention to an Eisenstein series which is   irregular at $p$ and denote by $f$ its unique $p$-stabilisation.  
In addition to belonging to the  Eisenstein components $\cE_{\mathbf{1}, \phi}$ and  $\cE_{\phi, \mathbf{1}}$, the corresponding point   
also belongs to the cuspidal locus $\cC_{\cusp}$ of $\cC$, because  $f$  vanishes at all cusps of the multiplicative part of the ordinary locus of  $X^{\rig}_{\mathrm{Iw}}$ (see Proposition~\ref{q-expa family}).

Denote by  $\varLambda=\bar\Q_p \lsem X \rsem$ (resp. $\cT$) the completed strict local ring of $\cW$  at $\kappa(f)$ (resp. of $\cC$ at  $f$). The weight map $\kappa$  induces a finite, flat map $\varLambda \to \cT$ and a surjection 
$\varLambda[T_\ell,U_p]_{\ell\nmid Np}\twoheadrightarrow \cT$.

\begin{theoremletter} \label{main-thm} 
\begin{enumerate}
\item The  cuspidal eigencurve $\cC_{\cusp}$ is \'etale over $\cW$ at  $f$. In particular there exists a unique
cuspidal irreducible component $\cF$ of $\cC$ containing $f$. 

\item The  $\varLambda$-algebra  $\cT$ is  Cohen--Macaulay but not  Gorenstein, and is in fact isomorphic to 
\[\varLambda \times_{\bar\Q_p}\varLambda\times_{\bar\Q_p} \varLambda=\left\{(a,b,c)\in \varLambda^3 \mid a(0)=b(0)=c(0)\right\}\]
endowed with  $\varLambda$-algebra  structure via the  diagonal embedding. 

\item The image $\cT'$ of $\varLambda[T_\ell]_{\ell\nmid Np}$ in $\cT$  is a  complete intersection, and is  isomorphic to 
\[
\left\{(a,b,c)\in \varLambda \times_{\bar\Q_p} \varLambda\times_{\bar\Q_p} \varLambda \Big{|} 
\left(\cL(\phi^{-1})+\cL(\phi)\right)a'(0)=\cL(\phi^{-1})b'(0)+\cL(\phi)c'(0)  \right\},
\]
where  $\cL(\phi)$ is the $\cL$-invariant of  $\phi$ defined in \eqref{eq:l-inv}. 
\end{enumerate}
\end{theoremletter} 

The non-smoothness of  $\cC$ at $f$ is related to   the vanishing at $s=0$ of the Kubota--Leopoldt  $p$-adic $L$-function 
$L_p(\phi \omega_p,s)$,  $\omega_p$ being the Teichm\"uller character. 
 Using the relation between  the element $\zeta_\phi(X)\in \varLambda$ defined in \eqref{Kubota--leopoldt} and   the constant term of  $\cE_{\mathbf{1}, \phi}$,  Darmon, Dasgupta and Pollack constructed a
first order cuspidal deformation of $f$  which played a pivotal role in their work \cite{DDP}  on the Gross--Stark conjecture over  totally real number fields.  Their strategy consists in taking a suitable combination of Eisenstein series and, via the action of certain Hecke operators, producing  a cuspidal Hida family that, while not being an eigenform itself, yields a first order eigenform. 
Theorem~\ref{main-thm}, and its expected generalisation to totally real number fields where $p$ is inert (see \cite{betina-dimitrov-shih}),
gives a more precise result as it  implies the uniqueness of the cuspidal deformation at first and in fact at any order.

\begin{rem}
When $\phi$ is  quadratic then $\cF$  admits a familiar description as a  Hida family 
 interpolating  theta series for  the   imaginary quadratic field fixed by $\phi$.    Theorem~\ref{main-thm}(i) then implies that  
 the characteristic power series of the congruence ideal attached to $\cF$ (see \cite[(6.9)]{HT})  does not vanish at $X=0$, as conjectured by Hida and Tilouine \cite[p.~192]{HT}.  This is used in a forthcoming work of Burungale, Skinner and  Tian \cite{BST} on a conjecture of Perrin--Riou regarding the local non-triviality at $p$ of $p$-adic Beilinson-Kato elements attached to  elliptic curves over $\Q$. 
 Theorem~\ref{main-thm}(i) is also used in \cite{shih-wang} to show that 
  the source of the map $\varpi$ in Sharifi's Conjecture \cite[Proposition~5.7]{sharifi}  (see also \cite[(2.3)]{shih-wang})
   is one-dimensional after localising at a prime ideal of the Iwasawa algebra corresponding to a trivial zero of the Kubota--Leopoldt $p$-adic $L$-function.  
 \end{rem}

 Our approach to the geometry of the eigencurve is Galois theoretic, and consists in
proving a modularity theorem for the  ordinary deformations of the Galois representation attached  to $f$. However, the reducibility of the latter causes several issues. First, since the Artin representation $\mathbf{1} \oplus \phi$ attached  to $f$ is  decomposable,   its deformation functor is not representable  in the sense of Mazur \cite{mazur}. Moreover, 
the irregularity assumption  implies that $\mathbf{1} \oplus \phi$ is  trivial on the decomposition group $\G_{\Q_p}$, creating an obstacle to imposing an ordinary deformation condition at $p$. 
In order to circumvent these  difficulties we introduce two  reducible indecomposable representations 
\[ \rho= \begin{pmatrix} \phi& \eta\\ 0 & \mathbf{1} \end{pmatrix}:\G_{\Q}\to \GL_2(\bar\Q_p)\text{  and  } 
\rho'= \begin{pmatrix}  \mathbf{1}& \phi\eta'\\ 0 & \phi \end{pmatrix}:\G_{\Q}\to \GL_2(\bar\Q_p), \]
where $[\eta]$ and $[\eta']$ are bases  of the lines $\rH^1(\Q, \phi)\simeq \Ext^1_{\Q}(\mathbf{1}, \phi)$ and $\rH^1(\Q, \phi^{-1})\simeq \Ext^1_{\Q}( \phi, \mathbf{1})$. The representations $\rho$ and  $\rho'$ are, up to isomorphism, the unique non-decomposable extensions  and  admit  universal ordinary deformation rings  
$\cR^{\ord}_{\rho}$ and  $\cR^{\ord}_{\rho'}$, respectively.
Finally we introduce a universal  ring $\cR_{\cusp}$ classifying pairs of ordinary deformations of $(\rho,\rho')$ sharing the  same traces and  Frobenius action on the unramified  $\G_{\Q_p}$-quotient. 
A substantial  part of this paper is devoted to the computation of their   tangent spaces relying on  
Baker--Brumer's Theorem~\cite{brumer} in Transcendence Theory. The vanishing of the relative tangent space of $\cR_{\cusp}$ allows us to deduce  as in \cite{bellaiche-dimitrov},  that $\cR_{\cusp}$  is unramified over $\varLambda$ and isomorphic to  the completed strict local ring $\cT_{\cusp}$  of $\cC_{\cusp}$ at $f$. 
Whereas the $\cT$-valued two dimensional pseudo-character of $\G_{\Q}$ does not arise from 
an ordinary  representation, because of the non-Gorensteinness of $\cT$, we manage to prove a
modularity result for $\cR^{\ord}_{\rho}$, and  for $\cR^{\ord}_{\rho'}$ (see \S\ref{s:ord-modularity}).  Denote by  $\cT^{\ord}_\rho$ the completed strict local ring at $f$ of the reduced equidimensional closed analytic subspace of $\cC$ given by the union of $\cC_{\cusp}$ and the component of $\cC$ corresponding to $\cE_{\mathbf{1},\phi}$ (see \S\ref{basic-eigencurve}).

\begin{theoremletter}
The $\varLambda$-algebra $\cR^{\ord}_{\rho}$ is a complete  intersection isomorphic to   $\cT^{\ord}_\rho$.
\end{theoremletter}

The Gorenstein property  plays a prominent role in the theory of Hecke algebras, as it guarantees the freeness 
of the module of modular forms over those algebras. Theorem~\ref{main-thm} thus provides a 
testing ground for challenging questions in Iwasawa Theory, such as the construction of $p$-adic $L$-functions in a neighbourhood  of a non-Gorenstein point of $\cC$, and a formulation of a Main Conjecture at such a point.  It further  suggests that this remarkable phenomenon is related to the action of the $U_p$ 
operator, as the $p$-deprived Hecke algebra $\cT'$ is Gorenstein and even a  complete intersection (see Corollary~\ref{generalisedeigenspace}).

Let $J_{\eis} \subset \cT_{\cusp}$ be the Eisenstein ideal associated to $\cE_{\mathbf{1}, \phi}$ (see \S.\ref{subsection: The Eisenstein ideal}). 

\begin{theoremletter}\label{congridealintro} There exists an isomorphism of $\varLambda$-algebras $\cT_{\cusp}/J_{\eis} \xrightarrow{\sim} \varLambda/(\zeta_\phi(X))$. 
\end{theoremletter}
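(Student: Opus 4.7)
The plan is to construct geometrically a surjection $\bar{\mathrm{res}}\colon \cT_{\cusp}\twoheadrightarrow \varLambda/(L_p(\phi\omega_p,s))$ whose kernel is exactly the Eisenstein ideal $J_{\eis}$.

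By Theorem~A(i), $\cT_{\cusp}\cong \varLambda$. The Eisenstein component $\cE_{\mathbbm{1},\phi}$, being parametrised by weight, has completed local ring $\cT_{\eis}$ at $f$ also isomorphic to $\varLambda$. Combining this with Theorem~A(ii) and Theorem~B, the local ring of the union $\cC_{\cusp}\cup\cE_{\mathbbm{1},\phi}$ is identified with the fibre product
\[
\cT_{\rho}^{\ord}\cong \{(a,b)\in \varLambda^2 : a(0)=b(0)\},
\]
the two projections recovering $\cT_{\cusp}$ and $\cT_{\eis}$. By the definition of the Eisenstein ideal, $J_{\eis}$ is the image under the cuspidal projection of $\ker(\cT_{\rho}^{\ord}\twoheadrightarrow \cT_{\eis})$, equivalently the ideal of $\cT_{\cusp}$ generated by the differences $T_\ell|_{\cC_{\cusp}}-a_\ell(\cE_{\mathbbm{1},\phi})$ for $\ell\nmid Np$.

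The key input is that the constant term at the cusp $\infty$ of the universal overconvergent form on $\cE_{\mathbbm{1},\phi}$, viewed as an element of $\cT_{\eis}\cong \varLambda$, equals $\tfrac12 L_p(\phi\omega_p,s)$ up to a unit of $\varLambda^\times$, by the Deligne-Ribet/Serre interpolation of generalised Bernoulli numbers. The geometric residue map is then constructed as follows: using the sheaf of overconvergent modular forms in a strict neighbourhood of $f$ in $\cC$, the constant term at $\infty$ assembles into a $\varLambda$-linear functional on $\cT_{\rho}^{\ord}$ that vanishes on $\cC_{\cusp}$ and coincides on $\cE_{\mathbbm{1},\phi}$ with $\tfrac12 L_p(\phi\omega_p,s)$. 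Composing with the cuspidal projection and reducing modulo $(L_p)$ yields $\bar{\mathrm{res}}$, and a direct unwinding of the fibre product identifies $\ker\bar{\mathrm{res}}$ with $J_{\eis}$, giving the claimed isomorphism.

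The main obstacle is the geometric construction of the residue map \emph{unconditionally}, i.e.\ without presupposing the Ferrero-Washington theorem ($L_p\not\equiv 0$). Because the constant term must be identified with $L_p(\phi\omega_p,s)$ as a specific element of $\varLambda$ and not merely modulo powers of the augmentation ideal, one must interpret the Eisenstein constant term through the overconvergent modular sheaf and its Hecke-equivariance with care; the non-Gorensteinness of $\cT$ enters only at the level of the fibre-product bookkeeping. Once Theorem~C is in place, comparing the principal ideal $(L_p)$ with the Hecke-theoretic computation that $J_{\eis}$ is the maximal ideal $(X)$ of $\varLambda$ (a consequence of the $\cL$-invariant relations in Theorem~A(iii), which force $a_\ell^{\cusp}-a_\ell^{\eis}$ to be a unit multiple of $X$ for appropriate $\ell$) immediately recovers the Ferrero-Greenberg theorem that $L_p(\phi\omega_p,s)$ has a simple zero at $s=0$, and the explicit formula for the Hecke derivative then yields Gross' formula for $L_p'(\phi\omega_p,0)$.
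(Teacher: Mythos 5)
Your overall geometric strategy is the same as the paper's: identify the constant term of $\cE_{\mathbbm{1},\phi}$ with $\tfrac12 L_p(\phi\omega_p,s)$ as an element of $\varLambda$ (not merely up to high-order error), use the residue exact sequence coming from the fact that the ordinary locus is affinoid, and transport the information to the Hecke algebra via duality. However, the crucial step you sketch — that ``the constant term at $\infty$ assembles into a $\varLambda$-linear functional on $\cT_{\rho}^{\ord}$'' — is not well-defined as stated: the constant-term map is a functional on the module $M^{\dag}_{\gm_f}$ of families, not on the Hecke algebra, and the content of the argument lies precisely in how one converts it. The paper does this via the perfect pairing $\cT_{\cusp}\times S^{\dag}_{\gm_f}\to\varLambda$ (Corollary~\ref{perfect-duality}): Proposition~\ref{q-expa family} together with the residue sequence shows that $\res_\varLambda(\cE_{\mathbbm{1},\phi})$ generates $\zeta_\phi\cdot C_\varLambda$, so $(\zeta_\phi)$ is \emph{exactly} the ideal modulo which $\cE_{\mathbbm{1},\phi}$ becomes cuspidal; then $\cE_{\mathbbm{1},\phi}\bmod \zeta_\phi$ lies in $S^{\dag}_{\gm_f}\otimes_\varLambda\varLambda/(\zeta_\phi)\cong\Hom_\varLambda(\cT_{\cusp},\varLambda/(\zeta_\phi))$, and since it is an eigenform with Eisenstein eigenvalues, the corresponding map factors through $\cT_{\cusp}/J_{\eis}$, and the annihilator computation of Lemma~\ref{lem:cong-module} then gives the isomorphism. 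You should supply this duality argument in place of the undefined functional.

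A second, logical issue: you invoke the fibre-product description $\cT_{\rho}^{\ord}\cong\varLambda\times_{\bar\Q_p}\varLambda$ (i.e.\ $e=1$, coming from Theorem~A(ii) and hence from the $\cL$-invariant nonvanishing) \emph{before} establishing the congruence-ideal statement. With $e=1$ in hand, $J_{\eis}=(X)$, so the statement $\cT_{\cusp}/J_{\eis}\cong\varLambda/(L_p)$ collapses to $L_p\sim X$ — i.e.\ it becomes literally equivalent to Ferrero--Greenberg, which you are also trying to deduce. This is not circular, but it muddles the logic and weakens the point: the paper proves Theorem~C without using $e=1$, purely from the residue/duality argument (and explicitly without invoking the etaleness $\cT_{\cusp}\cong\varLambda$), so that Theorem~C plus the $U_p$-derivative computation yields Ferrero--Greenberg as an honest corollary. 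Finally, the remark that ``the non-Gorensteinness of $\cT$ enters only at the level of the fibre-product bookkeeping'' is misleading — one avoids $\cT$ altogether and works on the Gorenstein quotient $\cT^{\ord}_\rho$ (or directly on $\cT_{\cusp}$), since otherwise the duality with the space of families would fail to be perfect.
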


In the absence of a trivial zero ({\it i.e.}  if $\phi(p) \ne 1$), this is a well known result of Mazur-Wiles and Wiles \cite[Theorem~4.1]{wiles-imc} (see also Ohta \cite{ohta-eis} and Emerton \cite{emerton-eis}  when $p\geqslant 5$). 
Our proof uses a geometric residue map from  the space of Hida families onto the ordinary cuspidal group whose  kernel consists of  cuspidal Hida families. 
Its definition taps into  Pilloni's  geometric constructions \cite{pilloni} of $p$-adic families of modular forms, hence differs from Ohta's residue map. The surjectivity of the  residue map is deduced from the fact that the ordinary locus of the modular curve is an affinoid (see Proposition~\ref{residuemapfamily}), while the Kubota--Leopoldt $p$-adic $L$-function appears in 
the constant terms of the Eisenstein family $\cE_{\mathbf{1}, \phi}$ (see Proposition~\ref{q-expa family}). 

Combining Theorems~\ref{main-thm} and \ref{congridealintro} yields a new proof  of the famous result of Ferrero--Greenberg  \cite{ferrero-greenberg} and Gross--Koblitz on  the non-vanishing of  $L'_p(\phi \omega_p,0)$ (see Proposition~\ref{ordL_p}).

The failure of \'etaleness for the eigencurve at $f$, combined with  the perfectness of the duality between $\cT$ and 
the $\varLambda$-module of ordinary families specialising to  $f$ established in \S\ref{sec:duality},  implies
the existence of non-classical forms in the generalised eigenspace associated to $f$. 
Their $q$-expansions  admit an explicit description in terms of $p$-adic logarithms of rational numbers and of $p$-units of the splitting field of $\phi$; this  contrasts with the  cuspidal regular setting  in the work of Darmon, Lauder and Rotger {\cite{DLR3}} where only $\ell$-units for $\ell\ne p$ are involved.

\begin{theoremletter} \label{qexpansionoverc}
Let $M^{\dag}\lsem f \rsem $ (resp. $S^{\dag}\lsem f \rsem$)  be the generalised eigenspace attached to $f$ inside the space of weight $1$ overconvergent  modular forms (resp. cuspforms) of tame level $N$ and central character $\phi$. 
Then  $S^{\dag}\lsem f \rsem=\bar\Q_p f$, while a complement of $S^{\dag}\lsem f \rsem$ in 
$M^{\dag}\lsem f \rsem$ is spanned by 
\begin{align*}
f^\dag_{\phi, \mathbf{1}}& =\sum_{n \geqslant 1 } q^n \sum_{d|n,  \, p\nmid d} \phi(d) 
\left(\ord_p(n) \cL(\phi)+\log_p\left(\tfrac{d^2}{n}\right)\right) \text{ and } \\ 
f^\dag_{\mathbf{1}, \phi}&=(\cL(\phi)+\cL(\phi^{-1}))\frac{L(\phi, 0)}{2}+
\sum_{n \geqslant 1 } q^n \sum_{d|n, \, p\nmid d} \phi(d) 
\left(\ord_p(n) \cL(\phi^{-1})-\log_p\left(\tfrac{d^2}{n}\right)\right),\end{align*}
where $\ord_p$ is the $p$-adic valuation and $\log_p$ is the $p$-adic logarithm normalised by $\log_p(p)=0$. 
\end{theoremletter}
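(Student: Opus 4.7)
The plan is to establish the dimension of $M^{\dag}\lsem f \rsem$ via the duality from \S\ref{sec:duality} and Theorem \ref{main-thm}, and then to realise the claimed forms as explicit linear combinations of derivatives of the three Hida families passing through $f$.

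First, I would use the perfect $\varLambda$-pairing between $\cT$ and the space of $\varLambda$-adic overconvergent ordinary forms specialising to $f$. Combined with the fibre product description $\cT \cong \varLambda \times_{\bar\Q_p} \varLambda \times_{\bar\Q_p} \varLambda$ from Theorem \ref{main-thm}(ii), a direct computation gives $\dim_{\bar\Q_p}(\cT/X\cT) = 3$, hence $\dim_{\bar\Q_p} M^{\dag}\lsem f \rsem = 3$. The analogous duality for cuspforms together with the etaleness of $\cC_{\cusp}$ at $f$ (Theorem \ref{main-thm}(i)) yields $\cT_{\cusp} \cong \varLambda$ and $\dim_{\bar\Q_p} S^{\dag}\lsem f \rsem = 1$, forcing $S^{\dag}\lsem f \rsem = \bar\Q_p f$.

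Next, I would produce concrete elements of $M^{\dag}\lsem f \rsem$ by differentiating each of the three Hida families through $f$, namely the cuspidal family $\cF$ and the two Eisenstein families $\cE_{\mathbbm{1}, \phi}$, $\cE_{\phi, \mathbbm{1}}$. If $\cF(X)$ is a Hida family specialising to $f$ with Hecke eigenvalues $a_T(X)$, then the identity $(T - a_T(0))^2 \cdot \partial_X \cF|_{X=0} = 0$ shows that the derivative at $X=0$ lies in the generalised eigenspace. Together with $f$, the three derivatives span $M^{\dag}\lsem f \rsem$, subject to the single linear relation in $T_\ell$-eigenvalues given by Theorem \ref{main-thm}(iii). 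The Fourier expansions of $\cE_{\mathbbm{1}, \phi}(X)$ and $\cE_{\phi, \mathbbm{1}}(X)$ are explicit: the $n$-th Fourier coefficient has the form $\sum_{d \mid n, p \nmid d} \phi(d) \langle d \rangle^X$ (respectively the analogous expression with the role of $d$ and $n/d$ interchanged), while the constant terms involve $L_p(\phi \omega_p, s)$. Differentiating at $X=0$ produces the $\log_p(d)$ contributions to the Fourier coefficients, and the terms involving $\ord_p(n) \cL(\phi)$ and $\ord_p(n) \cL(\phi^{-1})$ arise via the iteration $a'_{pn}(0) = \alpha'(0) \, a_n(0) + a'_n(0)$ whenever the $U_p$-eigenvalue $\alpha(X)$ on a family has nonzero derivative at $X=0$, with this derivative being precisely the corresponding $\cL$-invariant by Theorem \ref{main-thm}(iii). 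The constant term $(\cL(\phi)+\cL(\phi^{-1})) L(\phi,0)/2$ in $f^{\dag}_{\mathbbm{1}, \phi}$ comes from differentiating the constant term of $\cE_{\mathbbm{1},\phi}$ and invoking Gross' formula (Prop.~\ref{ordL_p}).

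The main obstacle is the bookkeeping in this last identification: one must track simultaneously the derivatives of the $U_p$- and $T_\ell$-eigenvalues on all three families and the derivative of the $p$-adic $L$-function, and check that the precise $\bar\Q_p$-linear combinations of the three Hida family derivatives dictated by Theorem \ref{main-thm}(iii) give exactly the $q$-expansions $f^{\dag}_{\phi, \mathbbm{1}}$ and $f^{\dag}_{\mathbbm{1}, \phi}$ stated above. Once the two forms are constructed, linear independence with $f$ follows by inspecting the constant terms and the $\ord_p(n) \cL(\cdot)$ contributions at a prime power $n=p^k$, which are governed by distinct $\cL$-invariants.
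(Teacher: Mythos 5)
Your overall strategy matches the paper's: establish $\dim M^{\dag}\lsem f\rsem = 3$ via the perfect pairing and the fibre-product description of $\cT$, get $S^{\dag}\lsem f\rsem = \bar\Q_p f$ from etaleness of $\cC_{\cusp}$, then build a complement by differentiating the three $\varLambda$-adic families through $f$ and computing the $q$-expansions using derivatives of Hecke eigenvalues plus the Hecke multiplicativity relations. The paper packages the "derivatives" as $E_{\mathbbm{1},\phi} = (\cE_{\mathbbm{1},\phi} - \cF)/X$ and $E_{\phi,\mathbbm{1}} = (\cE_{\phi,\mathbbm{1}} - \cF)/X$ (Proposition \ref{basishidafamilies}), which are manifestly elements of the free $\varLambda$-module $M^{\dag}_{\gm_f}$, so their weight-one specialisations are honest overconvergent forms. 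Your phrasing in terms of the individual derivatives $\partial_X\cF|_{X=0}$, $\partial_X\cE_{\mathbbm{1},\phi}|_{X=0}$, $\partial_X\cE_{\phi,\mathbbm{1}}|_{X=0}$ is slightly looser, since a single such $q$-expansion is not a priori a form of weight one; only the differences, i.e.\ specialisations of $(\cE_{\mathbbm{1},\phi}-\cF)/X$ etc., manifestly are. In practice your normalised linear combinations coincide with the paper's $f^{\dag}_{\mathbbm{1},\phi}$ and $f^{\dag}_{\phi,\mathbbm{1}}$, so this does not create a real gap, but the cleaner formulation is via the differences.

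The one genuinely different (and problematic) step is the constant term. You propose to get $a_0(f^{\dag}_{\mathbbm{1},\phi})$ by differentiating $a_0(\cE_{\mathbbm{1},\phi}) = \zeta_\phi/2$ and then "invoking Gross' formula (Prop.~\ref{ordL_p})." Two issues. First, the label is wrong: Prop.~\ref{ordL_p} is the Ferrero--Greenberg theorem (simple vanishing of $\zeta_\phi$ at $0$), not Gross' formula; Gross' formula is Cor.~\ref{Grossstarkcor}. Second, and more importantly, in this paper Gross' formula is a \emph{consequence} of Theorem~\ref{qexpansionoverc}: Cor.~\ref{Grossstarkcor} is deduced from the constant term computation. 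Using Gross' formula as input would short-circuit the paper's advertised new proof of that formula. The paper instead computes $a_0(f^{\dag}_{\mathbbm{1},\phi})$ by a self-contained duality argument: it observes that the classical (unstabilised) form $E_1(\mathbbm{1},\phi)$ lies in $M^{\dag}\lsem f\rsem$, checks that its $q$-expansion coefficients $a_n$ for $n \geqslant 1$ agree with those of $(f^{\dag}_{\mathbbm{1},\phi}+f^{\dag}_{\phi,\mathbbm{1}})/(\cL(\phi)+\cL(\phi^{-1})) + f$, and then uses the perfect pairing of Proposition~\ref{perfect-duality-totale} (which is determined by the $a_1(T_n \cdot \relbar)$, $n\geqslant 1$) to conclude that the two forms coincide, hence so do their $a_0$. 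This extracts the constant term without any input from $p$-adic $L$-function theory and is what permits the subsequent unconditional derivation of Gross' formula. Your proposal would prove the statement of Theorem~\ref{qexpansionoverc} assuming Gross' formula as an external result, but you should note the circularity and replace that step by the duality argument if you want the chain of implications leading to Cor.~\ref{Grossstarkcor} to remain a new proof.
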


Determining the coefficients of the classical (non $p$-stabilised) form  $E_1(\mathbf{1}, \phi) \in M^{\dag}\lsem f \rsem $ in the basis $\{f,f^\dag_{\phi, \mathbf{1}}, f^\dag_{\mathbf{1}, \phi} \}$ yields a new proof (see Corollary~\ref{Grossstarkcor}) of  Gross' formula  \cite{gross} for the derivative at a trivial zero: 
\[L'_p(\phi \omega_p, 0)= \cL(\phi)L(\phi, 0).\]  

For ``tame''  analogues of these results the  interested reader is referred to Remark~\ref{tame} where  we illustrate a 
rather striking analogy with a phenomenon arising in Mazur's Eisenstein ideal setting  \cite{mazur-eis}, as well as to 
the recent work of P.~Wake \cite{wake} where such phenomena  are related to the notion of ``extra reducibility".

{\it Acknolwedgements.} 
    We are mostly indebted to   D.~Benois and H.~Darmon for numerous stimulating discussions that helped this project  emerge.  We would also like to thank J.~Bella{\"{\i}}che, T.~Berger, A.~Burungale, A.~Lauder,   E.~Lecouturier,  P. ~Kassaei, V.~Rotger, S.-C.~Shih, P.~Wake and C.~Wang-Erickson for their interest and  helpful comments.  Finally, we would like to thank the anonymous referee  for his or her careful review of the manuscript and for all the remarks and suggestions which helped us improve the quality of the exposition.

\section{Ordinary Galois deformations}\label{deformation}\
For a  perfect field $L$ we denote  $\G_L=\Gal(\bar{L}/L)$ its absolute Galois group.  Given a prime number $\ell$, 
we  fix  an embedding $\iota_\ell: \bar{\Q} \hookrightarrow \bar\Q_\ell$ which determines an embedding $\G_{\Q_\ell}\hookrightarrow \G_{\Q}$, and denote by $\I_{\Q_\ell}$ the inertia subgroup at $\ell$ and by $\Frob_\ell\in\G_{\Q_\ell} $ an arithmetic Frobenius. We also fix  an embedding $\iota_\infty: \bar{\Q} \hookrightarrow \C$
which  determines a complex conjugation $\tau\in \G_{\Q}$. 
 All Galois representations are with coefficients in $\bar\Q_p$ unless stated otherwise.

\subsection{A canonical reducible non-split  Galois representation attached to \texorpdfstring{$\phi$}{}}\label{def-eta}
Consider the  unique element $\eta_{\mathbf{1}}\in \rH^1(\Q, \bar\Q_p) $ whose restriction  to 
the image of  $\rH^1(\Q_p,\bar\Q_p)$ in $\rH^1(\I_{\Q_p}, \bar\Q_p)$
corresponds via  Local Class Field Theory to the $p$-adic logarithm $\log_p$. By Global Class Field Theory one has 
 $\eta_{\mathbf{1}}(\Frob_\ell) =-\log_p(\ell)$ for all primes $\ell\neq p$.
As $\phi$ is odd, one has $\dim_{\bar\Q_p} \rH^1(\Q, \phi)=1$ (see for example \cite[(8)]{bellaiche-dimitrov}). 
Noting that the restriction map $\rH^1(\Q, \phi)\to  \rH^1(\I_{\Q_p}, \bar\Q_p )$ is injective, we 
let $[\eta]\in \rH^1(\Q, \phi) $ be the unique element mapping to  $\log_p$.

Given any  cocycle $\eta'': \G_{\Q}\to \phi$ representing  a non-trivial element  $[\eta'']\in \rH^1(\Q, \phi)$,  the $\G_{\Q}$-representations  $\left( \begin{smallmatrix} \phi& \eta\\ 0 & \mathbf{1} \end{smallmatrix}\right)$ and
$\left( \begin{smallmatrix} \phi& \eta''\\ 0 & \mathbf{1} \end{smallmatrix}\right)$ are always conjugated by an upper-triangular matrix which is unipotent if and only if $[\eta]=[\eta'']$. 
Since $\phi-\mathbf{1}$ is a basis of the coboundaries and $\phi_{|\G_{\Q_p}}=\mathbf{1}$, the restriction  $\eta_{|\G_{\Q_p}}$
only depends on $[\eta]$.  Moreover, as $\phi(\tau)=-1$, there exists a unique 
cocycle $\eta$  representing the cohomology class $[\eta]$  such that $\eta(\tau)=0$. 
We let $\rho=\left( \begin{smallmatrix} \phi& \eta\\ 0 & \mathbf{1} \end{smallmatrix}\right):\G_\Q\to \GL_2(\bar\Q_p)$.

\subsection{Ordinary deformations of \texorpdfstring{$\rho$}{}}\label{ord-def}

Let $\mathscr{A}$ be the category of  Artinian local $\bar\Q_p$-algebras $A$ with 
maximal ideal $\gm_A$ and residue field $\bar{\Q}_ p$, where the morphisms are local homomorphisms of 
$\bar\Q_p$-algebras inducing identity on the residue field (note that 
$A/\gm_A=\bar{\Q}_ p$ canonically as  $\bar\Q_p$-algebras).

Consider the functor $\cD^{\univ}_{\rho}$ assigning to $A\in  \mathscr{A}$ the set of lifts  $\rho_A:\G_{\Q}\to \GL_2(A)$ of $\rho$  
modulo strict equivalence ({\it i.e.} conjugation by an element of $1+\mathrm{M}_2(\gm_A)$). 
Since $\phi$ and $[\eta]$ are both non-trivial, the centraliser of the image of $\rho$ consists only of scalar matrices, 
hence $\cD^{\univ}_{\rho}$  is pro-representable by a complete Noetherian local $\bar\Q_p$-algebra
$\cR^{\univ}_{\rho}$, called the universal deformation ring (see  \cite{mazur}).

Denote by $V_{\rho}=\bar\Q_p^2$ the representation space of $\rho$ endowed with its canonical basis $(e_1,e_2)$. There exists a unique $\G_{\Q_p}$-stable (in fact, it is $\G_{\Q}$-stable) filtration with unramified quotient
 \begin{align}\label{filtration-rho}
0 \to F_{\rho}=\bar\Q_p e_1\to V_{\rho}\to V_{\rho}/F_{\rho} \to 0.
\end{align}

\begin{defn}\label{def-ord-functor}
The functor $\cD^{\ord}_{\rho}$   assigns to $A\in  \mathscr{A}$ the set of tuples $(\rho_A,F_A)$, where
\begin{enumerate}
\item $\rho_A:\G_{\Q}\to \GL_2(A)$ is a continuous representation, such that $\rho_A\mod \gm_A = \rho$, and 
\item $F_A \subset A^2$ is a free direct factor  over $A$ of rank $1$  which is $\G_{\Q_p}$-stable and 
such that $\G_{\Q_p}$ acts  on $A^2/ F_A$ by an unramified character, denoted $\chi_A$, 
 \end{enumerate}
modulo strict equivalence relation  $[(\rho_A,F_A)]=[(P\rho_AP^{-1},P\cdot F_A)]$ for $P\in 1+\mathrm{M}_2(\gm_A)$. 
\end{defn}

As the restriction of $[\eta]$ to  $\I_{\Q_p}$ is non-trivial, 
it follows from  Schlessinger's criterion (see \cite[Corollary 6.6]{gouvea})  that $\cD^{\ord}_{\rho}$ is pro-representable  
by a quotient  of $\cR^{\univ}_{\rho}$. We will now provide an explicit description of the ideal  defining that quotient. 

 Choose a lift  $\rho_{\univ}=\left(\begin{smallmatrix} a&b\\ c& d\end{smallmatrix}\right): \G_\Q\to \GL_2(\cR^{\univ}_{\rho})$ representing the universal deformation of $\rho$ and define 
 $\cR^{\ord}_{\rho}$ as the quotient of  $\cR^{\univ}_{\rho} \lsem Y\rsem$ by the ideal
\[\left (d(h)-1-b(h)Y, c(g)+(d(g)-a(g))Y-b(g)Y^2; h\in \I_{\Q_p}, g\in\G_{\Q_p}\right).\]
Choosing any $h_0\in  \I_{\Q_p}$ such that $\eta(h_0)\neq 0$ 
(so that  $b(h_0)\in (\cR^{\univ}_{\rho})^{\times}$), the linear relation  $d(h_0)-1-b(h_0)Y=0$ shows that the natural composed map 
$\cR^{\univ}_{\rho}\to \cR^{\ord}_{\rho}$ is surjective.  By construction, the push-forward of $\rho_{\univ}$ along that surjection together with the line
having basis  $e_1+Y e_2$,   yield a point of 
$\cD^{\ord}_{\rho}(\cR^{\ord}_{\rho})$. Conversely, any 
point of $\cD^{\ord}_{\rho}(A)$ is represented by a push-forward  $\rho_A$ of $\rho_{\univ}$ along a (unique) homomorphism 
$\varphi_A: \cR^{\univ}_{\rho}\to A$,  and an ordinary line $F_A\subset A^2$. 
The latter has  a basis  $e_1+y e_2$ with $y\in \gm_A$, because $F_A \otimes_{A} \bar\Q_p = F_{\rho}$. 
Let $\widetilde\varphi_A: \cR^{\univ}_{\rho}\lsem Y\rsem \to A$ be the homomorphism extending $\varphi_A$ and sending $Y$ to $y$.  As
 \begin{align}\label{Y-line}
\begin{pmatrix}
1&0\\ -y& 1\end{pmatrix} \begin{pmatrix} a&b\\ c& d\end{pmatrix}\begin{pmatrix} 1&0\\ y& 1\end{pmatrix}=
\begin{pmatrix} a+by & b \\ c+(d-a)y-by^2 & d-by\end{pmatrix}
\end{align}
is the conjugation of $\rho_A$ in an ordinary basis,  it follows that $\widetilde\varphi_A$ factors through $\cR^{\ord}_{\rho}$, hence 
 $\varphi_A$ factors  through $\cR^{\ord}_{\rho}$ as well. Therefore $\cR^{\ord}_{\rho}$  represents $\cD^{\ord}_{\rho}$,  in particular the kernel of the natural surjection $\cR^{\univ}_{\rho}\to \cR^{\ord}_{\rho}$
 is independent of the particular choice of $\rho_{\univ}$.   
It follows  that any  tuple $(\rho_A,F_A)$ in $\cD^{\ord}_{\rho}(A)$ is characterised  by $\rho_A$ alone, {\it i.e.}  when  the ordinary filtration of $\rho_A$ exists, then it is unique. For this reason, and as the unramified character $\chi_A$  plays an important role, we  will sometimes denote  a point in $\cD^{\ord}_{\rho}(A)$ by  $(\rho_A,\chi_A)$.

One can define the nearly-ordinary deformation  functor $\cD^{\nord}_{\rho}$  by using the same definition 
as for $\cD^{\ord}_{\rho}$, but without imposing  the $\G_{\Q_p}$-quotient  to be unramified. An argument similar to the one presented above 
shows that   $\cD^{\nord}_{\rho}$ is  pro-representable by a quotient $\cR^{\nord}_{\rho}$ of $\cR^{\univ}_{\rho}\lsem Y \rsem$, and  $\cR^{\nord}_{\rho}$ is generated over $\mathrm{Im}(\cR^{\univ}_{\rho} \to \cR^{\nord}_\rho)$ by a root of 
the polynomial $b(h_0)Y^2+(a(h_0)-d(h_0))Y-c(h_0)$, or equivalently a root of 
$U^2-\tr(\rho_{\univ})(h_0)U+\det(\rho_{\univ})(h_0)$. 
Using Theorem~\ref{main-thm}, one can see that $\cR^{\nord}_\rho$ is indeed quadratic over 
$\mathrm{Im}(\cR^{\univ}_{\rho} \to \cR^{\nord}_\rho)$.

Finally let  $\cD^{\ord}_{\rho,0}$ be the sub-functor of $\cD^{\ord}_{\rho}$  given by the deformations with fixed determinant equal to $\phi$. Since $\varLambda$ is the universal deformation ring of $\phi$ (see \cite[\S6]{bellaiche-dimitrov}) the natural transformation 
$\rho_A\mapsto \det(\rho_A)$ endows $\cR^{\ord}_{\rho}$ with a natural structure of a $\varLambda$-algebra and
$\cD^{\ord}_{\rho,0}$ is pro-representable by $\cR^{\ord}_{\rho,0}=\cR^{\ord}_{\rho}/\gm_{\varLambda}\cR^{\ord}_{\rho}$.

\subsection{Reducible deformations of \texorpdfstring{$\rho$}{}}\label{red-def}

\begin{defn} 
Let  $\cD^{\red}_{\rho}$ be the subfunctor of $\cD^{\ord}_{\rho}$ consisting of $\G_{\Q}$-reducible deformations. 
\end{defn}

\begin{lemma}
The functor $\cD^{\red}_{\rho}$ is pro-representable by a quotient $\cR^{\red}_{\rho}$ of $\cR^{\ord}_{\rho}$. 
\end{lemma}
\begin{proof}  Choose a lift $\left(\begin{smallmatrix} a&b\\ c& d\end{smallmatrix}\right): \G_\Q\to \GL_2(\cR^{\ord}_{\rho})$ representing  the universal ordinary deformation   sending  the complex conjugation $\tau$  to $\left( \begin{smallmatrix}  -1& 0\\  0 & 1 \end{smallmatrix} \right)$. Applying \eqref{Y-line}  to a $\G_{\Q}$-stable line with  basis $e_1+ y e_2$, one finds that
 $c+(d-a)y-by^2=0$. Evaluating at  $\tau\in \G_{\Q}$ yields $y=0$ and  shows that
 \[\cR^{\red}_{\rho}=\cR^{\ord}_{\rho}/(c(g); g\in \G_{\Q}).\qedhere\]
 \end{proof}
 
While  $\rho$ admits a $\G_\Q$-quotient which is unramified at $p$ (see \eqref{filtration-rho}),  
this is not necessarily  true for all its ordinary, reducible lifts. To account for this discrepancy, we introduce the following functor. 

\begin{defn} \label{defn:eis}
Let $\cD^{\eis}_{\rho}$ be the subfunctor of $\cD^{\red}_{\rho}$ consisting of deformations which are 
reducible and ordinary for the same filtration, {\it i.e.} admit a rank $1$  $\G_{\Q}$-quotient which is unramified at $p$. 
\end{defn}

\begin{lemma}\label{idealJ=C}
The functor $\cD^{\eis}_{\rho}$ is pro-representable by  $\cR^{\eis}_{\rho}=\cR^{\ord}_{\rho}/(C(g); g\in \G_{\Q})$, 
where $\left(\begin{smallmatrix} A& B\\ C&D\end{smallmatrix}\right): \G_\Q\to \GL_2(\cR^{\ord}_{\rho})$  is a lift representing the universal ordinary deformation  in an ordinary basis. 
 
\end{lemma}

Finally,  let  $\cD^{\red}_{\rho,0}$ and $\cD^{\eis}_{\rho,0}$ be the sub-functors of  $\cD^{\red}_{\rho}$ and $\cD^{\eis}_{\rho}$, respectively,  classifying  the deformations having fixed determinant equal to $\phi$. By the discussion in \S\ref{ord-def} they are
pro-representable by  $\cR^{\red}_{\rho,0}=\cR^{\red}_{\rho}/\gm_{\varLambda}\cR^{\red}_{\rho}$ and 
$\cR^{\eis}_{\rho,0}=\cR^{\eis}_{\rho}/\gm_{\varLambda}\cR^{\eis}_{\rho}$,  respectively. 
We will  see in \S\ref{tg dimension all}  that $\cD^{\eis}_{\rho}$ and  $\cD^{\red}_{\rho}$ differ.

\subsection{Cuspidal deformations of \texorpdfstring{$\rho$}{}}
We will exploit the interchangeability of the characters $\mathbf{1}$ and $\phi$ used in the definition of 
$\cD^{\ord}_\rho$  to define another deformation functor denoted  $\cD_{\cusp}$ (together with its relative version $\cD_{\cusp}^0$),   and show that it is  pro-representable by a universal deformation ring  $\cR_{\cusp}$. We will later show that $\cR_{\cusp}$ is isomorphic to the local ring of the cuspidal eigencurve at $f$, thus justifying the notation.

Recall that in \S\ref{def-eta} we fixed a basis  $[\eta] \in \rH^1(\Q, \phi)$ and constructed  a  
 representation $\rho= \left(\begin{smallmatrix} \phi & \eta \\ 0 &\mathbf{1} \end{smallmatrix}\right)$. 
Since  $\dim_{\bar\Q_p} \rH^1(\Q, \phi^{-1})=1$ we can perform the following  analogous construction.
Fix a basis  $[\eta'] \in \rH^1(\Q, \phi^{-1})$ such that $[\eta']_{\mid \I_{\Q_p}} $ corresponds via Local Class Field Theory to the $p$-adic logarithm and let $\eta'$ be the unique  representative  such that $\eta'(\tau)=0$. 
Let   $\rho'=\left(\begin{smallmatrix}\mathbf{1} & \phi \eta' \\0 &\phi\end{smallmatrix}\right):\G_\Q\to \GL_2(\bar\Q_p)$ and 
consider, as in Definition~\ref{def-ord-functor}, the  functor   $\cD^{\ord}_{\rho'}$ which is analogously pro-representable  by a universal deformation algebra  $\cR^{\ord}_{\rho'}$.

\begin{defn}
Let $\cD_{\cusp}$ be the functor assigning to  $A \in \mathscr{A}$ the set of equivalence classes of pairs $((\rho_A, \chi_A),(\rho'_A, \chi'_A))$ in $\cD^{\ord}_{\rho}(A)\times \cD^{\ord}_{\rho'} (A)$ such that 
\begin{enumerate}
\item  $\tr(\rho_A)=\tr(\rho'_A )$,  $\det(\rho_A)=\det(\rho'_A )$, and 
\item  $\chi_A(\Frob_p)= \chi'_A(\Frob_p)$. 
\end{enumerate}
\end{defn}

Put  $\rho_{\cR}= \rho_{\cR^{\ord}_{\rho}}$,   $\chi_{\cR}= \chi_{\cR^{\ord}_{\rho}}$, and 
analogously $\rho'_{\cR}= \rho_{\cR^{\ord}_{\rho'}}$,   $\chi'_{\cR}= \chi_{\cR^{\ord}_{\rho'}}$. 
By definition the functor $\cD_{\cusp}$ is pro-representable by the quotient $\cR_{\cusp}$
 of $ \cR^{\ord}_{\rho} \widehat{\otimes}_{\varLambda} \cR^{\ord}_{\rho'}$ by the  ideal 
\[ \left(\tr (\rho_{\cR})(g) \otimes 1 - 1\otimes \tr(\rho'_{\cR})(g),
\chi_{\cR}(\Frob_p)\otimes 1 - 1\otimes \chi'_{\cR}(\Frob_p); g\in \G_{\Q}
\right). \]

\begin{lemma}\label{lem-surj}
The natural homomorphisms $\cR^{\ord}_{\rho} \to \cR_{\cusp}$ and $\cR^{\ord}_{\rho'} \to \cR_{\cusp}$ are surjective. 
\end{lemma}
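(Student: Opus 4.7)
The plan is to apply the Yoneda-type criterion: a continuous morphism of complete Noetherian local $\bar\Q_p$-algebras with residue field $\bar\Q_p$ is surjective if and only if the corresponding natural transformation of representable functors is injective on $A$-points for every $A \in \gC$. Applied to the first map, this reduces the surjectivity claim to showing that the forgetful transformation $\cD_{\cusp}(A) \to \cD^{\ord}_\rho(A)$, $(\rho_A, \rho'_A) \mapsto \rho_A$, is injective for each $A$: given $\rho_A \in \cD^{\ord}_\rho(A)$, any compatible ordinary deformation $\rho'_A$ of $\rho'$ (matching trace, determinant, and unramified Frobenius eigenvalue on the quotient) is uniquely determined up to strict equivalence. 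The second surjection follows by a symmetric argument.

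The main tool is complex conjugation $\tau \in \G_{\Q}$. Since $\rho'(\tau) = \mathrm{diag}(1,-1)$ (using $\eta'(\tau) = 0$) has distinct eigenvalues $\pm 1$, a Hensel-style lifting of idempotents places $\rho'_A(\tau)$ in the diagonal form $\mathrm{diag}(1,-1)$ within any strict equivalence class, leaving only the residual diagonal freedom $\mathrm{diag}(u,1)$ with $u \in 1+\gm_A$. In this $\tau$-basis, writing $\rho'_A(g) = \left(\begin{smallmatrix}\alpha(g)&\beta(g)\\\gamma(g)&\delta(g)\end{smallmatrix}\right)$, the relation $\rho'_A(g\tau) = \rho'_A(g) \cdot \mathrm{diag}(1,-1)$ and its analogue for $gh\tau$ give
\[2\alpha(g) = \tr\rho'_A(g) + \tr\rho'_A(g\tau), \qquad 2\delta(g) = \tr\rho'_A(g) - \tr\rho'_A(g\tau),\]
\[2\beta(g)\gamma(h) = \tr\rho'_A(gh) + \tr\rho'_A(gh\tau) - 2\alpha(g)\alpha(h),\]
and the trace-matching condition with $\rho_A$ immediately pins down $\alpha$, $\delta$ and the products $\beta(g)\gamma(h)$ as elements of the image of $\cR^{\ord}_\rho$.

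To recover individual off-diagonal entries, given a second compatible deformation $\rho''_A$ with entries $\tilde\alpha,\tilde\beta,\tilde\gamma,\tilde\delta$, I would pick $g_0 \in \I_{\Q_p}$ with $\eta'(g_0) \neq 0$ so that $\beta(g_0) \in A^\times$; conjugating $\rho''_A$ by $\mathrm{diag}(\tilde\beta(g_0)/\beta(g_0), 1) \in 1+M_2(\gm_A)$ (which preserves $\rho''_A(\tau) = \mathrm{diag}(1,-1)$) arranges $\tilde\beta(g_0) = \beta(g_0)$. From $\beta(g_0)\gamma(h) = \tilde\beta(g_0)\tilde\gamma(h)$ and the fact that $\beta(g_0)$ is a unit, one forces $\gamma = \tilde\gamma$; the $(2,2)$-entry of the cocycle relation $\delta(gh) = \gamma(g)\beta(h) + \delta(g)\delta(h)$, applied to both representations, then yields $(\beta(h)-\tilde\beta(h))\gamma(g) = 0$ for all $g,h$, giving $\beta = \tilde\beta$ as soon as the reducibility ideal $\langle \gamma(g) : g \in \G_{\Q}\rangle$ contains a non-zero-divisor. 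The delicate step I anticipate is the reducible stratum, where $\gamma \equiv 0$ in the chosen basis and the product relations degenerate: here one invokes the one-dimensionality of $\rH^1(\Q,\phi^{-1})$ (a consequence of $\phi$ being odd) to rigidify the compatible reducible extension class, showing that the residual class $[\phi\eta']$ admits no inequivalent lift once the ordinary filtration data inherited from $\rho_A$ is fixed.
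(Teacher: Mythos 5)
Your Yoneda reduction is valid (and is in fact equivalent to checking injectivity only on $\bar\Q_p[\epsilon]$-points, i.e.\ surjectivity on cotangent spaces), and your overall strategy — show that, given $\rho_A$, a compatible $\rho'_A$ is unique up to strict equivalence — is a genuine alternative to the paper's argument. The paper instead invokes a black-box result (Kisin, \emph{Cor.~1.4.4(ii)}, which rests on Bella\"iche--Chenevier/Nyssen/Rouquier) that the pseudo-deformation ring $\cR^{\ps}$ surjects onto $\cR^{\univ}_\rho$ because $\rho$ is a non-split extension of two distinct characters; from this $\gm_{\cR^{\ord}_\rho}$ is generated by the trace functions, and the trace-matching relation $\tr\rho_{\cR}=\tr\rho'_{\cR}$ in $\cR_{\cusp}$ then gives $\gm_{\cR^{\ord}_\rho}\cR_{\cusp}=\gm_{\cR_{\cusp}}$, i.e.\ unramifiedness, hence surjectivity. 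Your argument is essentially an attempt to reprove the Kisin input from scratch, which is legitimate and illuminating; the preliminary steps ($\tau$-diagonalization by Hensel, the identities $2\alpha=\tr(\cdot)+\tr(\cdot\tau)$, $2\delta=\tr(\cdot)-\tr(\cdot\tau)$, $2\beta(g)\gamma(h)=\tr(gh)+\tr(gh\tau)-2\alpha(g)\alpha(h)$, the normalization $\tilde\beta(g_0)=\beta(g_0)$ using that $\beta(g_0)\in A^\times$, and the conclusion $\gamma=\tilde\gamma$) are all correct and precisely the pseudo-representation mechanics underlying the cited result.

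The gap is the last step, and it is not a corner case — it is the whole content. Your ``generic'' argument (recovering $\beta$ from $\gamma$ whenever the reducibility ideal $\langle\gamma(g)\rangle$ contains a non-zero-divisor) never applies in the one situation you actually need, namely $A=\bar\Q_p[\epsilon]$: there $\gm_A^2=0$, while both $\mu:=\beta-\tilde\beta$ and $\gamma$ lie in $\gm_A$, so the relation $\mu(g)\gamma(h)=0$ is vacuous and carries no information (in fact for $A=\bar\Q_p[\epsilon]$ every ordinary deformation is reducible by Proposition~\ref{lemmatd}, so $\gamma$ vanishes outright). Your gesture at ``one-dimensionality of $\rH^1(\Q,\phi^{-1})$'' is the right ingredient, but as stated it is not a proof: over a general $A$ the relevant object is the $A$-module $\rH^1(G_\Q^{Np},\gm_A\otimes\psi)$ for $\psi$ the appropriate deformation of $\phi^{-1}$, and one-dimensionality over $\bar\Q_p$ controls this only after an Iwasawa-theoretic freeness statement such as Proposition~\ref{def-red-rho}, which appears later in the paper; invoking it here would invert the logical order. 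Moreover the explanation ``the ordinary filtration data inherited from $\rho_A$'' is not what rigidifies the lift — $\rho_A$ and $\rho'_A$ have different underlying modules and filtrations — it is the $\tau$-normalization together with the $g_0$-normalization that do.

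What would complete the argument (and only for the $A=\bar\Q_p[\epsilon]$ case, which suffices): after the normalizations, $\mu(gh)=\alpha(g)\mu(h)+\mu(g)\delta(h)$ shows that $\mu/\delta$ is a cocycle in $Z^1(\Q,\gm_A\otimes\phi^{-1})=\epsilon\cdot Z^1(\Q,\phi^{-1})$. The $\tau$-diagonalization forces $\mu(\tau)=0$; since the only coboundary $g\mapsto(\phi^{-1}(g)-1)x$ vanishing at $\tau$ is the zero one (because $\phi^{-1}(\tau)=-1$), the cocycle $\mu/\epsilon$ is a scalar multiple of $\eta'$ with no coboundary component. The normalization $\mu(g_0)=0$ together with $\eta'(g_0)\ne 0$ kills the remaining scalar. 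Thus $\mu=0$. Spelling this out — especially the role of the two \emph{independent} vanishing conditions at $\tau$ and $g_0$ against the two-dimensional space $Z^1(\Q,\phi^{-1})=\rH^1\oplus B^1$ — is exactly the missing content, and is what the paper imports wholesale through the Kisin reference.
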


\begin{proof}
Let $\cR^{\ps}$ be the universal ring pro-representing deformations of the pseudo-character $\phi +\mathbf{1}$ (see \cite[Lemma 1.4.2]{kisin}). Since $\tr(\rho_{\univ})$ is a pseudo-character lifting $\phi +\mathbf{1}$, the universal property gives a 
homomorphism  $\cR^{\ps} \to \cR^{\univ}_\rho$  which is surjective by \cite[Corollary~1.4.4(ii)]{kisin}. 
Composing with the surjection  $\cR^{\univ}_\rho\twoheadrightarrow \cR^{\ord}_\rho$ 
yields a natural surjection $\cR^{\ps} \twoheadrightarrow \cR^{\ord}_\rho$. It follows that  $\gm_{\cR^{\ord}_{\rho}}$  is generated by the values of $\tr (\rho_{\cR})-\phi-\mathbf{1}$ and similarly for $\gm_{\cR^{\ord}_{\rho'}}$. Using this and the fact that 
$(\tr(\rho_{\cR}) -\phi-\mathbf{1})\otimes 1$ and  $1\otimes (\tr(\rho'_{\cR})-\phi-\mathbf{1})$ have the same image under   $ \cR^{\ord}_{\rho} \widehat{\otimes}_{\varLambda} \cR^{\ord}_{\rho'}  \twoheadrightarrow  \cR_{\cusp}$, one can show that 
$\gm_{\cR^{\ord}_{\rho}} \cR_{\cusp} = \gm_{ \cR_{\cusp}}$. In other terms,  the natural homomorphisms  $\cR^{\ord}_{\rho} \to \cR_{\cusp}$ and $\cR^{\ord}_{\rho'} \to \cR_{\cusp}$ are unramified morphisms of complete local Noetherian rings having the same residue field,  hence they are surjective.
\end{proof}

Let $\cD_{\cusp}^0$  be the subfunctor of $\cD_{\cusp}$ consisting  of deformations with fixed determinant equal to $\phi$. 
It is pro-representable by $\cR_{\cusp}^0=\cR_{\cusp}/\gm_{\varLambda}\cR_{\cusp}$.

\section{Tangent spaces}\label{sec:tangent}

 In this section we interpret the tangent spaces of the functors introduced in \S\ref{deformation}
  using  Galois cohomology and   compute their dimensions. We will discover that all infinitesimal reducible deformations  of $\rho$ are necessarily ordinary.

\subsection{Tangent spaces for nearly ordinary deformations}\label{tg dimension all}
 
 Let $\bar\Q_p[\epsilon]$ denote the $\bar\Q_p$-algebra of dual numbers.
Recall that there is a  natural isomorphism: 
\begin{align}
 \rH^1(\Q, \ad(\rho)) \xrightarrow{\sim} \cD^{\univ}_{\rho}(\bar\Q_p[\epsilon])=t^{\univ}_{\rho}, \,\,[\left( \begin{smallmatrix} a& b\\ c & d \end{smallmatrix}\right)] \mapsto [\rho_\epsilon], \text{ where } \rho_\epsilon=(1+\epsilon \left( \begin{smallmatrix} a& b\\ c & d \end{smallmatrix}\right)) \rho,
\end{align}
identifying   $\rH^1(\Q, \ad^0(\rho))$  with $t^{\univ}_{\rho,0}$, where $\ad(\rho)$ (resp. $\ad^0(\rho)$) is the adjoint representation 
of $\rho$ (resp. the sub-representation on trace $0$ elements $\End^0_{\bar\Q_p}(V_{\rho})$). Hence the tangent spaces 
\begin{align}
t^{\ord}_{\rho}=\cD^{\ord}_{\rho}(\bar\Q_p[\epsilon]), \
t^{\red}_{\rho}=\cD^{\red}_{\rho}(\bar\Q_p[\epsilon]),  \text{ and } 
t^{\eis}_{\rho}=\cD^{\eis}_{\rho}(\bar\Q_p[\epsilon]).
\end{align}
of the functors defined in \S\ref{ord-def}  and \S\ref{red-def} are  naturally isomorphic to subspaces of $\rH^1(\Q, \ad(\rho))$ that we will now determine precisely. 

Let  $W_{\rho}$ be the kernel of the natural homomorphism $\End_{\bar\Q_p}(V_{\rho}) \to \Hom_{\bar\Q_p}(F_{\rho}, V_{\rho}/F_{\rho})$ of $\G_\Q$-representations arising from  \eqref{filtration-rho} and   let  $W_\rho^0=W_\rho\cap \End^0_{\bar\Q_p}(V_{\rho})$.
Let $W'_{\rho}$ be the kernel of the natural homomorphism $W_{\rho}\to \Hom_{\bar\Q_p}(V_{\rho}/F_{\rho}, V_{\rho}/F_{\rho})$
and   let  $W'_\rho{}^0=W'_\rho\cap \End^0_{\bar\Q_p}(V_{\rho})$.

The basis $(e_1,e_2)$ of $V_{\rho}$ in which $\rho=\left( \begin{smallmatrix} \phi& \eta\\ 0 & \mathbf{1} \end{smallmatrix}\right)$
yields an identification   $\End_{\bar\Q_p}(V_{\rho})=\mathrm{M}_2(\bar\Q_p)$ under which $W_\rho$ (resp. $W'_\rho$) corresponds to  the subspace of the upper triangular matrices (resp.  matrices of the form $\left( \begin{smallmatrix} a& b\\ 0 & 0 \end{smallmatrix}\right)$).

\begin{prop} \label{lemmatd}
One has  $t^{\nord}_{\rho} \simeq \rH^1(\Q,{W_\rho})\oplus \bar\Q_p$,  $t^{\eis}_{\rho}\simeq \rH^1(\Q,{W'_\rho})$,  
$t^{\red}_{\rho}=t^{\ord}_{\rho}\simeq \rH^1(\Q,{W_\rho})$. Moreover 
$t^{\nord}_{\rho,0} \simeq \rH^1(\Q,{W^0_\rho})\oplus \bar\Q_p$ and 
$t^{\red}_{\rho,0}=t^{\ord}_{\rho,0}\simeq \rH^1(\Q,{W^0_\rho})$. Finally $t^{\eis}_{\rho,0}=t^{\eis}_{\rho}\cap t^{\ord}_{\rho,0}=\{0\}$.  
\end{prop}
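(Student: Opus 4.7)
The plan is to identify each tangent space as an appropriate Galois cohomology group via the canonical bijection $[X]\leftrightarrow \rho_{\epsilon}=(1+\epsilon X)\rho$ between $\rH^1(\Q,\ad(\rho))$ and $t^{\univ}_{\rho}$, and then to track each deformation condition through standard long exact sequences.

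For $t^{\nord}_{\rho}$, I parameterize the $\G_{\Q_p}$-stable line deforming $V'_{\rho}=\bar\Q_p e_1$ by a slope $Y\in\bar\Q_p$, with basis $e_1+\epsilon Y e_2$. A direct matrix computation using $\rho_{|\G_{\Q_p}}=\bigl(\begin{smallmatrix} 1&\eta\\ 0&1\end{smallmatrix}\bigr)$ shows that the stability is equivalent to the $(2,1)$-entry of $X$ vanishing on $\G_{\Q_p}$, and that any coboundary $\partial Q$ automatically has zero $(2,1)$-entry on $\G_{\Q_p}$, so this is a condition on the class $[X]$. The short exact sequence
\[0\to W_\rho\to \ad(\rho)\to \phi^{-1}\to 0\]
combined with the injectivity of $\rH^1(\Q,\phi^{-1})\hookrightarrow \rH^1(\G_{\Q_p},\phi^{-1})$ (which holds since $\rH^1(\Q,\phi^{-1})$ is one-dimensional with basis $[\eta']$ having non-trivial restriction to $\I_{\Q_p}$) then forces such a class to lie in the image of $\rH^1(\Q,W_\rho)\hookrightarrow \rH^1(\Q,\ad(\rho))$. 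The slope $Y$ contributes a free $\bar\Q_p$ direction, since its only remaining equivalence comes from scalar $Q$'s which act trivially on $Y$, yielding a split extension $t^{\nord}_{\rho}\simeq \rH^1(\Q,W_\rho)\oplus\bar\Q_p$.

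For $t^{\ord}_{\rho}$, the extra unramifiedness of the quotient character translates to $d_{|\I_{\Q_p}}=Y\cdot \eta_{|\I_{\Q_p}}$ for the $(2,2)$-entry $d$ of $X$, in the same parameterization. Since $\eta_{|\I_{\Q_p}}$ spans the one-dimensional space $\Hom(\I_{\Q_p},\bar\Q_p)$, this equation uniquely determines $Y$ from $X$, so that $t^{\ord}_{\rho}$ is the graph of $X\mapsto Y$ and thus $t^{\ord}_{\rho}\simeq \rH^1(\Q,W_\rho)$. The inclusion $t^{\red}_{\rho}\subseteq t^{\ord}_{\rho}$ is functorial, while every class in $\rH^1(\Q,W_\rho)$ admits an upper-triangular representative so that $\rho_{\epsilon}$ is reducible; hence $t^{\red}_{\rho}=t^{\ord}_{\rho}$. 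For $t^{\eis}_{\rho}$, the quotient character is now a global hom $d:\G_{\Q}\to \bar\Q_p$ and the condition $d_{|\I_{\Q_p}}=0$ forces $d=0$ globally, since $\rH^1(\Q,\bar\Q_p)$ is one-dimensional and its unique nontrivial class $\eta_{\mathbbm{1}}$ is ramified at $p$. The long exact sequence of $0\to W'_\rho\to W_\rho\to \mathbbm{1}\to 0$ then identifies $t^{\eis}_{\rho}$ with $\rH^1(\Q,W'_\rho)$.

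The fixed-determinant statements are obtained by repeating these arguments with $\ad(\rho), W_\rho, W'_\rho$ replaced by their traceless counterparts $\ad^0(\rho), W^0_\rho, W'_\rho{}^0$. The delicate point is $t^{\eis}_{\rho,0}=\{0\}$: a class in $t^{\eis}_{\rho}\cap t^{\ord}_{\rho,0}$ must satisfy both $d=0$ and $a+d=0$, hence $a=0$, so it lies in the image of $\rH^1(\Q,W'_\rho{}^0)\simeq \rH^1(\Q,\phi)$ inside $\rH^1(\Q,W^0_\rho)$. I would show this natural map is zero via the long exact sequence of $0\to W'_\rho{}^0\to W^0_\rho\to \mathbbm{1}\to 0$: one first checks $\rH^0(\Q,W^0_\rho)=0$ (a nontrivial invariant would express $\eta$ as a coboundary in $\rH^1(\Q,\phi)$, contradicting $[\eta]\neq 0$); the connecting map $\bar\Q_p=\rH^0(\Q,\mathbbm{1})\hookrightarrow \rH^1(\Q,W'_\rho{}^0)=\bar\Q_p\cdot[\eta]$ is therefore injective between one-dimensional spaces and hence an isomorphism, so the subsequent map $\rH^1(\Q,W'_\rho{}^0)\to \rH^1(\Q,W^0_\rho)$ is zero, giving $t^{\eis}_{\rho,0}=\{0\}$.
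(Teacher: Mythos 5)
Your argument follows the paper's strategy quite closely: identify $t^{\univ}_{\rho}$ with $\rH^1(\Q,\ad\rho)$, track the $(2,1)$-entry $c$ through the exact sequence $0\to W_\rho\to\ad\rho\to\phi^{-1}\to 0$ together with the injectivity of the restriction $\rH^1(\Q,\phi^{-1})\to\rH^1(\Q_p,\bar\Q_p)$, parametrize the $\G_{\Q_p}$-stable line by a slope, and reduce $t^{\eis}_{\rho}$ and $t^{\eis}_{\rho,0}$ to the long exact sequence of $0\to W'_\rho\to W_\rho\to\mathbbm{1}\to 0$ (and its traceless analogue). The overall structure and conclusions are right. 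Two points, however, are less precise than they should be.

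First, in the $t^{\ord}_{\rho}$ step you justify the existence and uniqueness of $Y$ by asserting that $\eta|_{\I_{\Q_p}}$ spans the one-dimensional space $\Hom(\I_{\Q_p},\bar\Q_p)$. That space is in fact not one-dimensional (wild inertia contributes infinitely many continuous $\bar\Q_p$-valued homomorphisms), so the claim as stated would not pin down $Y$. What saves the argument — and what the paper actually uses — is that once $c|_{\G_{\Q_p}}=0$, the $(2,2)$-entry $d|_{\G_{\Q_p}}$ is a genuine class in $\rH^1(\Q_p,\bar\Q_p)$; hence $d|_{\I_{\Q_p}}$ lies in $\mathrm{Im}\bigl(\rH^1(\Q_p,\bar\Q_p)\to\rH^1(\I_{\Q_p},\bar\Q_p)\bigr)$, which \emph{is} one-dimensional and spanned by $\eta|_{\I_{\Q_p}}$. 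You should replace the appeal to $\Hom(\I_{\Q_p},\bar\Q_p)$ by this.

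Second, for $t^{\eis}_{\rho}\simeq\rH^1(\Q,W'_\rho)$ you identify $t^{\eis}_\rho$ with $\ker\bigl(\rH^1(\Q,W_\rho)\to\rH^1(\Q,\mathbbm{1})\bigr)=\mathrm{Im}\bigl(\rH^1(\Q,W'_\rho)\to\rH^1(\Q,W_\rho)\bigr)$, but you do not remark that this image is isomorphic to $\rH^1(\Q,W'_\rho)$, i.e.\ that $\rH^1(\Q,W'_\rho)\to\rH^1(\Q,W_\rho)$ is injective. This is equivalent to $\rH^0(\Q,W_\rho)\twoheadrightarrow\rH^0(\Q,\mathbbm{1})$, which does hold (scalar matrices surject onto the diagonal quotient), and you do carry out the analogous $\rH^0$-check in the fixed-determinant case; for consistency you should check it here as well, as the paper does.
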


 \begin{proof}  
  An infinitesimal deformation $[\rho_\epsilon]$  of $\rho$ can always be   represented by a lift 
 \begin{align}\label{rho-epsilon}
 \rho_\epsilon=\left(1+\epsilon \left(\begin{matrix}a & b \\ c & d \end{matrix}\right) \right)\rho=
\left( \begin{matrix} \phi(1+\epsilon a) & \epsilon b +\eta(1+\epsilon a)\\ \epsilon\phi c & 1+\epsilon d+\epsilon\eta c \end{matrix}\right), \end{align}
such that  $\rho_\epsilon(\tau)=\left(\begin{smallmatrix}-1 & 0 \\ 0 & 1 \end{smallmatrix}\right)$,   where $ \left(\begin{smallmatrix}a & b \\ c & d \end{smallmatrix}\right):\G_{\Q} \to  \mathrm{M}_2(\bar\Q_p)$ is a cocycle (we recall that $\mathrm{M}_2(\bar\Q_p)$ is endowed with the adjoint action of $\rho$). 
 As changing the lift  amounts to changing the cocycle 
$ \left(\begin{smallmatrix}a & b \\ c & d \end{smallmatrix}\right) $ by a coboundary, one sees from 
\begin{align}\label{rho-adjoint}
 \rho \left(\begin{matrix}a & b \\ c & d \end{matrix}\right)\rho^{-1}=
\left( \begin{matrix} a+ c\eta\phi^{-1}&  b\phi +(d-a)\eta- c\eta^2\phi^{-1}\\ c\phi^{-1} &  d - c\eta\phi^{-1}  \end{matrix}\right).
 \end{align}
that the cocycle $c:\G_\Q\to \bar\Q_p(\phi^{-1}) $ is changed by a coboundary as well, hence $[c]\in \rH^1(\Q, \phi^{-1})$ is uniquely determined 
by $[\rho_\epsilon]\in t^{\univ}_{\rho}$. As $\rH^0(\Q,\phi^{-1})=\{0\}$, the exact sequence of $\G_{\Q}$-modules
\[ 0 \to W_\rho \to \ad \rho \to \phi^{-1} \to 0,\]
where the  map $\ad \rho\to\phi^{-1}$   is given by  $\left[\left(\begin{smallmatrix}a & b \\ c & d \end{smallmatrix}\right) \right]\mapsto[c]$,  yields an exact sequence in cohomology
\begin{align}\label{exact-adjoint}
0 \to \rH^1(\Q,W_\rho) \to \rH^1(\Q,\ad \rho) \to \rH^1(\Q,\phi^{-1})\to \rH^2(\Q,W_\rho).
\end{align}
 
 As $\rho_{|\G_{\Q_p}}$ is indecomposable, any $\rho_\epsilon(\G_{\Q_p})$-stable   $\bar\Q_p[\epsilon]$-line 
 in $\bar\Q_p[\epsilon]^2$ has basis $e_1+ \epsilon \mu \cdot e_2$  for some $\mu\in \bar\Q_p$. By \eqref{rho-epsilon}, for $L=\Q$ or $\Q_p$ one has 
\[\bar\Q_p[\epsilon](e_1+ \epsilon \mu \cdot e_2) \text{ is } 
\rho_\epsilon(\G_L)\text{-stable} \iff  c(g)= \mu(1-\phi^{-1}(g)) \text{ for all } g\in \G_L.\]

Noting  the restriction map $\rH^1(\Q, \phi^{-1})\to \rH^1(\Q_p, \bar\Q_p)$ is injective, 
one deduces that: 
\begin{align}\label{crit-red}  
\rho_\epsilon  \text{ is } \G_{\Q} \text{-reducible} \iff  \rho_\epsilon  \text{  is nearly-ordinary} \iff [c]=0 
\overset{\eqref{exact-adjoint}}{\iff}  [\rho_\epsilon]\in \rH^1(\Q,W_\rho). 
\end{align}
However, as  $\phi(\tau)\neq1$, the $\G_{\Q}$-stable line is unique (when exists), while 
$\phi_{|\G_{\Q_p}}=\mathbf{1}$ implies that if there exists a $\G_{\Q_p}$-stable line then they all are. 
In particular $t^{\nord}_{\rho} \simeq \rH^1(\Q,{W_\rho})\oplus \bar\Q_p$.

To prove that $t^{\red}_{\rho}=t^{\ord}_{\rho}\simeq \rH^1(\Q,{W_\rho})$ it suffices  to show that
any nearly-ordinary $\rho_\epsilon$ (in particular any reducible $\rho_\epsilon$) is in fact ordinary, {\it i.e.} 
there exists a $\G_{\Q_p}$-stable line  $F_{\rho_\epsilon}= \bar\Q_p[\epsilon](e_1+ \epsilon \mu \cdot e_2)$
such that $\I_{\Q_p}$ acts trivially on $(\bar\Q_p[\epsilon])^2/F_{\rho_\epsilon}$. 
As  $c(\tau)=0$, the condition $[c]=0$ implies that $c=0$. It then follows  from   \eqref{Y-line} and \eqref{rho-epsilon} that 
\begin{align}\label{eq:ord-line}  
\rho_{\epsilon| \G_{\Q_p}}=
\left( \begin{matrix} 1 & 0\\ \epsilon \mu  & 1 \end{matrix}\right)
\left( \begin{matrix} 1+\epsilon (a+\mu\eta ) & \eta+ \epsilon(b +\eta a)\\  0 & 1+\epsilon (d-\mu \eta)  \end{matrix}\right)
\left( \begin{matrix} 1 & 0\\ -\epsilon \mu  & 1 \end{matrix}\right), 
\end{align} 
hence  $\I_{\Q_p}$ acts trivially on 
$(\bar\Q_p[\epsilon])^2/F_{\rho_\epsilon}$ if and only if $d=\mu \eta$ on $\I_{\Q_p}$. 
As  $d_{|\G_{\Q_p}}\in \rH^1(\Q_p, \bar\Q_p)$ and  $\eta_{|\I_{\Q_p}}$ is a basis of the image of the restriction map  $ \rH^1(\Q_p, \bar\Q_p)\to \rH^1(\I_{\Q_p}, \bar\Q_p)$, there exists a unique $\mu\in \bar\Q_p$ such that  $d_{|\I_{\Q_p}}=\mu\eta_{|\I_{\Q_p}}$. 

Since $W_\rho\simeq W^0_\rho\oplus \mathbf{1}$,  it is clear that $t^{\nord}_{\rho,0} \simeq \rH^1(\Q,{W^0_\rho})\oplus \bar\Q_p$ and  $t^{\red}_{\rho,0}=t^{\ord}_{\rho,0}\simeq \rH^1(\Q,{W^0_\rho})$.

By definition   $[\rho_\epsilon]\in t^{\eis}_{\rho}$ if and only if $d\in \rH^1(\Q,\bar\Q_p)$ is unramified, {\it i.e. } $d=0$. Hence
\begin{align}\label{eq:t-eis}
\begin{array}{lclcl}
t^{\eis}_{\rho}    & \simeq & \ker \left(\rH^1(\Q,W_\rho) \to \rH^1(\Q,\bar\Q_p)\right) \text{ and } \\
t^{\eis}_{\rho,0}    & \simeq &    \ker \left(\rH^1(\Q,W^0_\rho) \to \rH^1(\Q,\bar\Q_p)\right), 
\end{array}
\end{align}
 where the maps come from  the natural homomorphism $W_{\rho}\to \Hom_{\bar\Q_p}(V_{\rho}/F_{\rho}, V_{\rho}/F_{\rho})=\bar\Q_p$ sending $\left[\left(\begin{smallmatrix}a & b \\ 0 & d \end{smallmatrix}\right) \right]$ to $[d]$. 
As $\rH^0(\Q,\rho)=\{0\}$,  there are  exact  sequences in cohomology
\begin{align}
 \rH^0(\Q,W_\rho)  \xrightarrow{\sim} &  \rH^0(\Q,\bar\Q_p) \to  \rH^1(\Q,W'_\rho) \to \rH^1(\Q,W_\rho) \to  \rH^1(\Q,W_\rho/W'_\rho),   \nonumber \\
 & \rH^0(\Q,\bar\Q_p) \xrightarrow{\sim} \rH^1(\Q,\phi) \to \rH^1(\Q,W^0_\rho) \to \rH^1(\Q,W^0_\rho/W'_\rho{}^0).
\label{exact-W} 
\end{align}
Here we have used that $W'_\rho\simeq \rho$ and $W'_\rho{}^0\simeq \phi$, because 
 by  \eqref{rho-adjoint}  one has  $\rho \left(\begin{smallmatrix}a & b \\ 0 & 0 \end{smallmatrix}\right) \rho^{-1}=  \left(\begin{smallmatrix}a & b\phi-a\eta \\ 0 & 0 \end{smallmatrix}\right) $. It follows then  from \eqref{eq:t-eis} that  
$t^{\eis}_{\rho}=\rH^1(\Q,{W'_\rho})$ and   $t^{\eis}_{\rho,0}=\{0\}$.  
 \end{proof}

To determine the dimensions of these cohomology groups we will  need the following lemma.

\begin{lemma} One has $\dim_{\bar\Q_p} \rH^1(\Q, \rho)=1$ and $\dim_{\bar{\Q}_p} \rH^2(\Q,\rho)=0$.
\end{lemma}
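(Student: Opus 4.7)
My plan is to extract both dimensions from the long exact sequence in Galois cohomology attached to the tautological short exact sequence
\[
0 \to \phi \to \rho \to \mathbbm{1} \to 0
\]
of $\G_{\Q,S}$-modules (where $S=\{\ell \mid Np\}\cup\{\infty\}$, so that all representations in sight are unramified outside $S$). The inputs I will rely on are the following standard dimension counts:
\begin{itemize}
\item $\rH^0(\Q,\phi)=0$ since $\phi$ is nontrivial, and $\rH^0(\Q,\rho)=0$ since $\rho$ is non-split with a nontrivial subrepresentation $\phi$;
\item $\dim_{\bar\Q_p}\rH^1(\Q,\phi)=1$ (already used in \S\ref{def-eta}) and, by the global Euler--Poincar\'e formula applied to the odd character $\phi$, $\rH^2(\Q,\phi)=0$;
\item $\dim_{\bar\Q_p}\rH^1(\Q,\mathbbm{1})=1$ (this is Leopoldt's conjecture for $\Q$, which is classical) and $\rH^2(\Q,\mathbbm{1})=0$ by Euler--Poincar\'e applied to the trivial representation, whose $\tau$-minus part is zero.
\end{itemize}

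Next I would feed these into the long exact sequence
\[
0\to \rH^0(\Q,\mathbbm{1}) \xrightarrow{\partial_0} \rH^1(\Q,\phi) \to \rH^1(\Q,\rho) \to \rH^1(\Q,\mathbbm{1}) \xrightarrow{\partial_1} \rH^2(\Q,\phi) \to \rH^2(\Q,\rho) \to \rH^2(\Q,\mathbbm{1}) \to 0.
\]
The connecting map $\partial_0:\bar\Q_p\to \rH^1(\Q,\phi)$ is cup product with the extension class of $\rho$ in $\Ext^1_{\G_\Q}(\mathbbm{1},\phi)=\rH^1(\Q,\phi)$, which by construction is the nonzero class $[\eta]$. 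Since both source and target are one-dimensional, $\partial_0$ is an isomorphism, so $\rH^1(\Q,\phi)\to \rH^1(\Q,\rho)$ vanishes. This identifies $\rH^1(\Q,\rho)$ with a subspace of the one-dimensional space $\rH^1(\Q,\mathbbm{1})$; but $\rH^2(\Q,\phi)=0$ forces the map to $\rH^1(\Q,\mathbbm{1})$ to be surjective, yielding $\dim_{\bar\Q_p}\rH^1(\Q,\rho)=1$. The tail of the sequence reads $0\to\rH^2(\Q,\rho)\to \rH^2(\Q,\mathbbm{1})=0$, giving $\rH^2(\Q,\rho)=0$.

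The only delicate input is the pair of equalities $\dim\rH^1(\Q,\mathbbm{1})=1$ and $\rH^2(\Q,\mathbbm{1})=0$; once Leopoldt for $\Q$ is invoked the rest is a dimension chase. Everything else is already available from \S\ref{def-eta} and the global Euler--Poincar\'e formula for $\G_{\Q,S}$-cohomology of finite-dimensional $\bar\Q_p$-representations, applied using the fact that $\phi(\tau)=-1$ while $\mathbbm{1}(\tau)=1$.
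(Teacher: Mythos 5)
Your proof is correct and follows essentially the same route as the paper: both rest on the long exact sequence attached to $0\to\phi\to\rho\to\mathbbm{1}\to 0$ (equivalently, the paper's sequence \eqref{exact-W} for $W^0_\rho\simeq\rho$), the fact that the connecting map $\rH^0(\Q,\mathbbm 1)\to\rH^1(\Q,\phi)$ is cup product with the nonzero extension class $[\eta]$ and hence an isomorphism, and the global Euler--Poincar\'e formula to kill $\rH^2(\Q,\phi)$. The only cosmetic divergence is that the paper obtains $\rH^2(\Q,\rho)=0$ by a second direct Euler--Poincar\'e computation for $\rho$ itself, while you read it off from the tail of the long exact sequence together with $\rH^2(\Q,\mathbbm 1)=0$; these are equivalent.
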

\begin{proof} The global Euler characteristic formula yields: 
\[\dim \rH^2(\Q,\phi)=\dim \rH^1(\Q,\phi)-\dim \rH^0(\Q,\phi)+ \dim \rH^0(\R,\phi)-\dim(\phi)=1-0+0-1=0.\]
Since $W'_\rho\simeq \rho$ the exact sequence \eqref{exact-W} implies that  $\rH^1(\Q, \rho)\simeq \rH^1(\Q, \bar\Q_p)$ is 
$1$-dimensional. Another application of   Euler's global characteristic formula yields  $\rH^2(\Q,\rho)=\{0\}$.
\end{proof}
Since $W^0_\rho\simeq \rho \simeq W'_\rho$ it follows  then from Proposition~\ref{lemmatd} that 
\begin{align} \label{dim t_R}
\dim t^{\red}_{\rho}=\dim t^{\ord}_{\rho}=2,  \,\,
\dim t^{\eis}_{\rho}=\dim t^{\red}_{\rho,0}=\dim t^{\ord}_{\rho,0}=1 \text{  and }  t^{\eis}_{\rho,0}=\{0\}. 
\end{align}

\begin{rem}  By the proof of Proposition~\ref{lemmatd}, a deformation  
$[\rho_{\epsilon}]\in t^{\red}_{\rho}\setminus t^{\eis}_{\rho}$ is represented by a lift 
$\rho_{\epsilon}=\left( \begin{smallmatrix} a & b\\ 0 & d \end{smallmatrix} \right)$ 
such that   $d$ is  ramified at $p$, and yet $\rho_{\epsilon}$ admits a $\G_{\Q_p}$-filtration with 
unramified  quotient. The non-uniqueness of the $\G_{\Q_p}$-stable line is due to the fact that $\phi$ is trivial on $\G_{\Q_p}$. 
\end{rem}

\subsection{An application of  Baker--Brumer's Theorem} 
The fixed field $H$ of $\ker(\phi)\subset \G_\Q$ is a totally imaginary cyclic extension of $\Q$ of degree $2r\geqslant 2$, in which $p$ splits completely. 
The embedding $\iota_p: \bar{\Q} \hookrightarrow \bar\Q_p$ determines  a place  $v_0$ of $H$ and 
an  embedding $\G_{\Q_p}=\G_{H_{v_0}}\subset \G_H$ yielding a canonical restriction map 
\begin{align} \label{res-iota}
\res_p: \rH^1(H, \bar\Q_p) \to  \rH^1(\Q_p, \bar\Q_p).
\end{align}
Fixing a generator $\sigma$  of  $G=\Gal(H/\Q)$ allows us to number the places in $H$ above $p$
as $v_i=v_0 \circ \sigma^i$, $0\leqslant i \leqslant 2r-1$. 
 Let $\cO_H$ (resp. $\cO_{v_i}$) be the ring of integers of $H$ (resp. $H_{v_i}$). 

Recall the standard choice of $p$-adic logarithm  $\log_p$ sending $p$ to $0$. 
Denoting  $\ord_p:\Q_p^\times\to \Z$ the valuation, we consider the $\bar\Q_p$-linear maps 
\begin{align*}
\log_{v_0} : \cO_H[\tfrac{1}{p}]^\times\otimes \bar\Q_p&\longrightarrow \bar\Q_p & \ord_{v_0} : \cO_H[\tfrac{1}{p}]^\times\otimes \bar\Q_p& \longrightarrow \bar\Q_p\\
u \otimes x&\mapsto \log_p (\iota_p(u)) x &u \otimes x& \mapsto \ord_p (\iota_p(u)) x
\end{align*}

Given any   {\it odd} character $\psi$ of $G$, the $\psi^{-1}$-eigenspace of  $\cO_H[\tfrac{1}{p}]^\times\otimes \bar\Q_p$
is a line, and we let   $u_\psi$  be a basis. Note that 
$\ord_{v_0}(u_\psi)\neq 0$ since otherwise, by $\psi^{-1}$-equivariance,  one would have $\ord_{v_i}(u_\psi)=\ord_{v_0}(\sigma^{-i}(u_\psi))= 0$
for all $0\leqslant i \leqslant 2r-1$, which is impossible since the $\psi^{-1}$-eigenspace of  $\cO_H^\times\otimes \bar\Q_p$ is zero. 
 Following \cite[(7)]{DDP} we define the  $\cL$-invariant of  $\psi$ as 
 \begin{align}\label{eq:l-inv}
\cL(\psi):=-\frac{\log_{v_0}(u_{\psi})}{\ord_{v_0}(u_\psi)}\in \bar\Q_p.
\end{align}

As well known, $\rH^1(\Q, \psi)$ is a line isomorphic to  the $\psi^{-1}$-eigenspace of $\rH^1(H, \bar\Q_p)$. Fix  $[\eta_\psi]\in \rH^1(\Q, \psi)$ whose restriction  to $\I_{\Q_p}$ corresponds   to $\log_p$ (as for $\eta_1$ defined in \S\ref{def-eta}). Then 
\[ L_{\bar\Q} := \bar\Q \eta_1 \oplus \bigoplus_{ \psi \text{  odd } }  \bar\Q \eta_\psi \]
is a  $\bar\Q$-linear subspace of   $L_{\bar{\Q}} \otimes_{\bar{\Q}} \bar\Q_p=\rH^1(H, \bar\Q_p)$.

\begin{prop} \label{L-invariant}
The element  $\eta_\psi-\eta_{\mathbf{1}}$ is unramified at $p$ and $(\eta_\psi-\eta_{\mathbf{1}})(\Frob_p)=\cL(\psi^{-1})$.
\end{prop}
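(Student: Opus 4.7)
The strategy is to restrict everything to $\G_H$ (on which $\psi$ becomes trivial so that $\eta_\psi$ and $\eta_{\mathbbm{1}}$ are both honest continuous homomorphisms to $\bar\Q_p$) and then to translate via global class field theory into the language of idele class characters, where the product formula does the work.

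Part (i) is essentially a tautology built into the definitions: by construction both $\eta_\psi|_{\I_{v_0}}$ and $\eta_{\mathbbm{1}}|_{\I_{v_0}}$ correspond via local class field theory to $\log_p$, so their difference vanishes on $\I_{v_0} = \I_{\Q_p}$. This in particular guarantees that the quantity $\alpha := (\eta_\psi - \eta_{\mathbbm{1}})(\Frob_p)$ is well defined, independent of the choice of Frobenius lift.

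For Part (ii), let $\tilde\eta_\psi, \tilde\eta_{\mathbbm{1}} : \mathbb{A}_H^\times/H^\times \to \bar\Q_p$ be the idele class characters corresponding to $\eta_\psi|_{\G_H}$ and $\eta_{\mathbbm{1}}|_{\G_H}$. Because $\eta_{\mathbbm{1}}$ pulls back from a $\G_\Q$-character, $\tilde\eta_{\mathbbm{1}}$ factors through $N_{H/\Q}$, and since $p$ splits completely in $H$ its restriction to $H_{v_i}^\times = \Q_p^\times$ is just $\log_p$ (with $\log_p(p)=0$) at every $v_i$. On the other hand, $\tilde\eta_\psi$ lies in the $\psi^{-1}$-eigenspace for the $G$-action, which forces
\[
\tilde\eta_\psi|_{H_{v_i}^\times}(x) = \psi^{-i}(\sigma)\bigl(\log_p(x) + \alpha\, v_p(x)\bigr)
\]
under the identification $H_{v_i} = \Q_p$. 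Apply the product formula $\sum_v (\tilde\eta_\psi - \tilde\eta_{\mathbbm{1}})(u_{\psi^{-1},v}) = 0$ to the generator $u_{\psi^{-1}}$ of the $\psi$-eigenspace of $\cO_H[\tfrac{1}{p}]^\times \otimes \bar\Q_p$. Archimedean terms vanish for continuity reasons; finite $v \nmid p$ contribute zero since $u_{\psi^{-1},v}\in\cO_v^\times$ and both $\tilde\eta_\psi$, $\tilde\eta_{\mathbbm{1}}$ are unramified there. At $v \mid p$, combining the equivariance identities $\log_{v_i}(u_{\psi^{-1}}) = \psi^i(\sigma)\log_{v_0}(u_{\psi^{-1}})$ and $\ord_{v_i}(u_{\psi^{-1}}) = \psi^i(\sigma)\ord_{v_0}(u_{\psi^{-1}})$ with the formula above, the factors $\psi^{\pm i}(\sigma)$ cancel and one computes that the $\tilde\eta_\psi$-contribution collapses to $2r\bigl[\log_{v_0}(u_{\psi^{-1}}) + \alpha\,\ord_{v_0}(u_{\psi^{-1}})\bigr]$, whereas the $\tilde\eta_{\mathbbm{1}}$-contribution becomes $\bigl(\sum_i \psi^i(\sigma)\bigr)\log_{v_0}(u_{\psi^{-1}}) = 0$ because $\psi \ne \mathbbm{1}$. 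Using that $\ord_{v_0}(u_{\psi^{-1}}) \ne 0$, this yields $\alpha = -\log_{v_0}(u_{\psi^{-1}})/\ord_{v_0}(u_{\psi^{-1}}) = \cL(\psi^{-1})$.

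The main obstacle I anticipate is justifying that $\tilde\eta_\psi$ is unramified at all finite $v \nmid p$, so that those terms in the product formula really do drop out. For $\eta_{\mathbbm{1}}$ this is built into the definition, but for $\eta_\psi$ the cohomology group $\rH^1(\Q,\psi)$ allows ramification at primes dividing the conductor of $\psi$, and one must verify that the distinguished class whose inertia at $p$ realises $\log_p$ in fact descends to $\rH^1(\G_{H,\{p,\infty\}}, \bar\Q_p)^{\psi^{-1}}$. This is where the Baker--Brumer theorem (and hence the title of the section) comes in: it underwrites the injectivity/linear-independence statements for $p$-adic logarithms of $p$-units of $H$ that are needed both to characterise $\eta_\psi$ uniquely by its inertia at $v_0$ and to know that the $\cL$-invariant in the denominator is nonzero.
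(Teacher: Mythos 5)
Your argument is essentially the paper's: in both cases the whole content is the global reciprocity law applied to the $\psi$-eigencomponent of the $p$-units of $H$, combined with the $\psi^{-1}$-equivariance of $\eta_\psi$ at the places above $p$. The paper packages the reciprocity law as the exact sequence \eqref{restr hs} and evaluates it on a distinguished $p$-unit $x_0$ of valuation $e$ at $v_0$ and $0$ elsewhere, then passes to $u_{\psi^{-1}}=\sum_i\sigma^i(x_0)\otimes\psi(\sigma)^{-i}$; you evaluate the product formula directly on the eigenvector $u_{\psi^{-1}}$. The computation is the same.

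One correction on your closing remark: the unramifiedness of $\eta_\psi$ at the finite places $v\nmid p$ does not rest on Baker--Brumer. It is automatic, since a continuous homomorphism from $\cO_v^\times$ (a finite group times a pro-$\ell$ group with $\ell\neq p$) into the torsion-free, $p$-divisible target $\bar\Q_p$ must vanish. Likewise the non-vanishing of $\ord_{v_0}(u_\psi)$ is elementary: it follows from the vanishing of the odd eigenspaces of $\cO_H^\times\otimes\bar\Q_p$, as the paper points out when introducing $\cL(\psi)$. Baker--Brumer is invoked only in Proposition \ref{independence log}, where one needs the $\bar\Q$-linear independence of the $\cL(\psi)$'s (equivalently the injectivity of $\res_p$ on $L_{\bar\Q}$); it plays no role in the present proposition.
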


\begin{proof} There is an exact sequence of  $\bar\Q_p[G]$-modules  
\begin{align}
\label{restr hs}
0\to \mathrm{Hom}(\G_H, \bar\Q_p) \to  \bigoplus_{i=0}^{2r-1} \mathrm{Hom}(H_{v_i}^\times, \bar\Q_p)\to \mathrm{Hom}(\cO_H[\tfrac{1}{p}]^\times, \bar\Q_p). 
\end{align}
where $\xi: \G_H \to \bar\Q_p$  is sent to the collection of   maps $\xi_i: H_{v_i}^\times \to \bar\Q_p$, $0\leqslant i \leqslant 2r-1$, 
defined by taking the restriction  to $H_{v_i}^\times  \subset \widehat{ H_{v_i}^\times} \simeq \G_{H_{v_i}}^\mathrm{ab}$. 
Then  $ (\eta_\psi-\eta_\mathbf{1})(\Frob_p)=(\eta_{\psi,0}-\eta_{\mathbf{1},0})(\varpi_0)$, 
where $\varpi_0$ denotes a uniformiser of $H_{v_0}$. Denoting by  $e$  the exponent of the Hilbert class group of $H$, there exists
 $x_{0}\in \cO_H[\tfrac{1}{p}]^\times$ whose valuation at $v_0$ is $e$, while it is  $0$ at all other finite places of $H$. 
We can write  $x_0=\varpi_0^e y$ with $y \in \cO_{v_0}^\times$ and we have  
\[
(\eta_{\psi,0}-\eta_{\mathbf{1},0})(x_0)=(\eta_{\psi,0}-\eta_{\mathbf{1},0})(\varpi_0^e y)=e\cdot(\eta_{\psi,0}-\eta_{\mathbf{1},0})(\varpi_0)=
e\cdot(\eta_\psi-\eta_\mathbf{1})(\Frob_p).
\]
Since $x_0\in \cO_H[\tfrac{1}{p}]^\times $ and $\eta_\psi-\eta_\mathbf{1}\in \mathrm{Hom}(\G_H, \bar\Q_p)$ 
the exact  sequence \eqref{restr hs} implies that 
\begin{align}
\label{eta components}
(\eta_{\psi,0}-\eta_{\mathbf{1},0})(x_0)=-\sum_{i=1}^{2r-1} (\eta_{\psi,i}-\eta_{\mathbf{1},i})(x_0).
\end{align}
Since by definition  $\eta_\psi$  belongs to the $\psi^{-1}$-eigenspace for the $G$-action, it is entirely determined by $\eta_{\psi,0}$. 
More precisely one has 
$\eta_{\psi,i}=\psi(\sigma)^{-i} (\eta_{\psi,0} \circ \sigma^i)$ for all $0\leqslant i \leqslant 2r-1$. 
Combining this with \eqref{eta components}  and observing that $\sigma^i(x_0)\in \cO_{v_0}^\times$
 for every $1\leqslant i \leqslant 2r-1$, we obtain
 \begin{align}\label{eq:eta-psi}
 \begin{split}
&e\cdot(\eta_\psi-\eta_{\mathbf{1} })(\Frob_{v_0})=(\eta_{\psi,0}-\eta_{\mathbf{1},0})(x_0)=-\sum_{i=1}^{2r-1}\ (\eta_{\psi,i}-\eta_{\mathbf{1},i})(x_0)=\\
&=-\sum_{i=1}^{2r-1}(\psi(\sigma)^{-i} \eta_{\psi,0}  (\sigma^{i}(x_0))-\eta_{\mathbf{1},0} ( \sigma^{i}(x_0)))
=-\sum_{i=0}^{2r-1} (\psi(\sigma)^{-i}-1)\log_p  (\iota_p(\sigma^{i}(x_0) ))
\end{split}
\end{align}
because the restrictions of $\eta_\psi$ and of $\eta_{\mathbf{1}}$ to $\I_{H_{v_0}}=\I_{\Q_p}$ are given by $\log_p$. Observe first that 
\[\sum_{i=0}^{2r-1} \log_p  (\iota_p(\sigma^{i}(x_0) )= \log_p  (\iota_p(\mathrm{N}_{H/\Q}(x_0) )\in 
\log_p  (\iota_p(\pm p^\Z))=\{0\},\]
and
$\sum_{i=0}^{2r-1} \psi(\sigma)^{-i} \log_p  (\iota_p(\sigma^{i}(x_0)))=\log_{v_0}(u_{\psi^{-1}})$, where
$u_{\psi^{-1}}=\sum_{i=0}^{2r-1}  \sigma^{i}(x_0)\otimes \psi(\sigma)^{-i} $. As 
$u_{\psi^{-1}}$   belongs to the
 $\psi$-eigenspace of  $\cO_H[\tfrac{1}{p}]^\times\otimes \bar\Q_p$, and as $\ord_{v_0}(u_{\psi^{-1}})=\ord_{v_0}(x_0\otimes 1)= e$, one has 
  $\log_{v_0}(u_{\psi^{-1}})=-e\cdot \cL(\psi^{-1})$   by definition \eqref{eq:l-inv}. Combining this with \eqref{eq:eta-psi} yields the claim. 
\end{proof}

\begin{prop}  \label{independence log}\
\begin{enumerate}
\item \label{independence alpha psi}The  $\cL(\psi)$ are linearly independent over $\bar\Q$, as $\psi$ runs over all odd characters of $G$. 
 \item \label{injective bar Q}The  restriction to  $L_{\bar{\Q}}$ of the map  $\res_p$ defined in \eqref{res-iota}  is injective. 
 \end{enumerate}
 \end{prop}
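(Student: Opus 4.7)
The argument rests on the Baker--Brumer theorem asserting that $p$-adic logarithms of algebraic numbers that are $\Q$-linearly independent remain linearly independent over $\bar\Q$. For part (i), I would make the basis $u_\psi$ of the $\psi^{-1}$-eigenspace explicit: taking $u_\psi=\sum_{i=0}^{2r-1}\psi(\sigma)^i\,\sigma^i(x_0)\otimes 1$ (with $x_0\in\cO_H[\tfrac{1}{p}]^\times$ as in the proof of Proposition~\ref{L-invariant}), one gets $\ord_{v_0}(u_\psi)=e$ because only the $i=0$ term contributes, while
\[
\cL(\psi)\;=\;-\tfrac{1}{e}\sum_{i=0}^{2r-1}\psi(\sigma)^i\log_p\bigl(\iota_p(\sigma^i(x_0))\bigr).
\]
A $\bar\Q$-linear relation $\sum_{\psi\text{ odd}}a_\psi\cL(\psi)=0$ thus becomes a $\bar\Q$-linear relation $\sum_{i=0}^{2r-1}b_i\log_p(\iota_p(\sigma^i(x_0)))=0$ with $b_i=\sum_\psi a_\psi\psi(\sigma)^i\in\bar\Q$, and Baker--Brumer reduces the analysis to finding the $\Q$-linear relations among the $2r$ logarithms.

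To compute the latter I would exploit that $\ker(\log_p)\cap\bar\Q_p^\times$ consists of roots of unity times rational powers of $p$: $\Z$-relations among $\log_p(\iota_p(\sigma^i(x_0)))$ correspond to multiplicative identities $\prod_i\sigma^i(x_0)^{n_i}\in\cO_H[\tfrac{1}{p}]^\times$ of that shape. Comparing valuations at each of the $2r$ primes of $H$ above $p$ immediately forces all $n_i$ to coincide, so the $\Z$-relation lattice is rank one, generated by the norm identity $N_{H/\Q}(x_0)=\pm p^e$, {\it i.e.}\ $\sum_i\log_p(\iota_p(\sigma^i(x_0)))=0$. Baker--Brumer then upgrades this: $b_i$ must be constant in $i$, so $\sum_\psi a_\psi\psi=b\cdot\mathbbm{1}$ as functions on $G=\langle\sigma\rangle$. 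Linear independence of the distinct characters $\{\mathbbm{1}\}\cup\{\psi\text{ odd}\}$ of $G$ finally gives $a_\psi=0$ for every odd $\psi$.

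For part (ii), I would take $\alpha=\sum_\psi a_\psi\eta_\psi\in L_{\bar\Q}$ in $\ker(\res_p)$ (with $\psi$ odd or trivial). Restricting to inertia $\I_{\Q_p}$ and using that each $\eta_\psi$ pulls back to $\log_p$ yields $(\sum_\psi a_\psi)\log_p=0$, hence $\sum_\psi a_\psi=0$; this allows the rewriting $\alpha=\sum_{\psi\neq\mathbbm{1}}a_\psi(\eta_\psi-\eta_\mathbbm{1})$, whose restriction to $\G_{\Q_p}$ is unramified by Proposition~\ref{L-invariant} with Frobenius value $\sum_{\psi\neq\mathbbm{1}}a_\psi\cL(\psi^{-1})$. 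This value must vanish, so part (i) forces $a_\psi=0$ for every odd $\psi$, and then $a_\mathbbm{1}=0$. The main obstacle is the Baker--Brumer step of (i), specifically the valuation-theoretic computation pinning down the $\Q$-relation space among the $p$-adic logarithms; once that is in place, the rest is formal character theory combined with a Frobenius evaluation.
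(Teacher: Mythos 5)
Your proposal is correct and follows essentially the same path as the paper's proof: the same choice of $u_\psi$ built from $x_0$, the same reduction to $\Q$-linear independence of the $p$-adic logarithms $\log_p(\iota_p(\sigma^i(x_0)))$ via valuation comparisons at the places above $p$, Baker--Brumer to pass to $\bar\Q$-coefficients, and the same restriction-to-inertia plus Frobenius-evaluation argument for (ii). The only cosmetic difference is that you conclude (i) by invoking linear independence of distinct characters of the cyclic group $G$, whereas the paper extracts an invertible Vandermonde submatrix; both encode the same fact.
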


\begin{proof}
\eqref{independence alpha psi}
 Suppose that $\sum_{\psi}m_{\psi} \cL(\psi)=0$ for some  $m_{\psi}\in \bar\Q$. As in the proof of Proposition~\ref{L-invariant}, we denote by $e$  the exponent of the Hilbert class group of $H$, and fix an element 
 $x_{0}\in \cO_H[\tfrac{1}{p}]^\times$ with valuation $e$ at $v_0$ and  $0$ at  all other finite places  of $H$. It follows that 
 \[
 \sum_{\psi \text{ odd}} m_{\psi} \sum_{i=0}^{2r-1}\ (\psi(\sigma)^i-1)\log_p  (\iota_p(\sigma^{i}(x_0) ))=0.
 \]
Since the $i=0$ summand vanishes, letting $m_\mathbf{1}=-\sum_{\psi \text{ odd}} m_{\psi}$ the formula can be written as 
\[
 \sum_{i=1}^{2r-1} \log_p  (\iota_p(\sigma^{i}(x_0) )) \left (\sum_{\psi \text{ odd or } \psi =\mathbf{1}} m_{\psi} \psi(\sigma)^i\right )=0
\]
We claim that the values $\{\log_p  (\iota_p(\sigma^{i}(x_0) ))\}_{1\leqslant i \leqslant 2r-1}$ are linearly independent over $\Q$. To see this suppose 
$\log_p(\iota_p(x))=0$ for some  element $x=\prod_{1\leqslant i \leqslant 2r-1} \sigma^i(x_0)^{n_i}$ with $n_i\in \Z$. 
As  $\iota_p(x)\in \bar\Z_p^\times$  this implies that  $x$ is a root of unity in $H$,  leading to $n_i=\ord_{v_i}(x)=0$ for all $1\leqslant i \leqslant 2r-1$. 
 By  Baker--Brumer's Theorem~\cite{brumer}, the elements $\{\log_p  (\iota_p(\sigma^{i}(x_0) ))\}_{1\leqslant i \leqslant 2r-1}$ are therefore  linearly   independent over $\bar\Q$, leading to  
 \begin{align}\label{odd char}
\sum_{\psi \text { odd or } \psi=1}m_\psi \psi(\sigma)^i   =0
\end{align}
for any $1\leqslant i \leqslant 2r-1$. Moreover, as $m_\mathbf{1}=-\sum_{\psi \text{ odd}} m_{\psi}$,  \eqref{odd char}   holds for $i=0$ as well.

 Let $\psi_1, \psi_2, \dots \psi_{r}$ be a numbering of the odd characters of $G$. The condition \eqref{odd char} can be rewritten as $(m_\mathbf{1}, m_{\psi_1}, \dots, m_{\psi_{r}})\cdot M=(0, 0, \dots,0)$, where 
 \[
M=\left (\begin{matrix}
1&1 & 1&\dots &1\\
1 & \psi_1(\sigma) & \psi_1(\sigma)^2 & \dots & \psi_1(\sigma)^{2r-1}  \\
\dots & \dots &\dots & \dots & \dots \\
1 & \psi_r(\sigma)& \psi_r(\sigma)^2 & \dots & \psi_r(\sigma)^{2r-1}  
\end{matrix}
\right )
\]
As  $2r \geqslant r+1$,  $M$ contains as a sub-matrix the Vandermonde matrix  of 
$( 1 , \psi_1(\sigma),  \dots, \psi_{r}(\sigma))$ which as well-known is invertible, implying that 
$m_\psi=0$ for every $\psi$.

\eqref{injective bar Q} It suffices to notice that the kernel of the restriction map $L_{\bar\Q_p} \to \rH^1(\I_{\Q_p}, \bar\Q_p)$ is spanned by $\{(\eta_{\mathbf{1}}-\eta_\psi)\}_{\psi \text{ odd}}$. Combining  Proposition~\ref{L-invariant} with \eqref{independence alpha psi} yields
  the desired result. 
 \end{proof}

\subsection{The  tangent space for cuspidal deformations}

Let $t_{\cusp}=\cD_{\cusp}(\bar\Q_p[\epsilon])$ and $t_{\cusp}^0=\cD_{\cusp}^0(\bar\Q_p[\epsilon])$ be the tangent space and the relative tangent space to the functor $\cD_{\cusp}$.

\begin{prop}\label{cuspidal tangent}
We have  $\dim_{\bar\Q_p} t_{\cusp}=1$  and  $\dim_{\bar\Q_p} t_{\cusp}^0=0$. 
\end{prop}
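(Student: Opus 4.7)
The plan is to realise $t_{\cusp}$ as a subspace of $t^{\ord}_{\rho}\oplus t^{\ord}_{\rho'}$ cut out by matching conditions, express everything in cohomological coordinates, and invoke the transcendence input of Proposition \ref{independence log}. By Lemma \ref{lem-surj} and the definition of $\cR_{\cusp}$, the tangent space $t_{\cusp}$ is identified with the set of pairs in $t^{\ord}_{\rho}\oplus t^{\ord}_{\rho'}$ whose traces, determinants, and Frobenius eigenvalues on the unramified quotient at $p$ coincide. By Proposition \ref{lemmatd} one has $t^{\ord}_{\rho}\simeq \rH^1(\Q,W_\rho)$, and the short exact sequence $0\to \phi\to W_\rho\to \mathbbm{1}^2\to 0$ of $\G_\Q$-modules (read off from \eqref{rho-adjoint} restricted to upper-triangular matrices), combined with $\rH^2(\Q,\phi)=\{0\}$ from the excerpt, yields an isomorphism $\rH^1(\Q,W_\rho)\simeq \rH^1(\Q,\bar\Q_p)^2$ sending a cocycle $X=\left(\begin{smallmatrix}a & b\\ 0 & d\end{smallmatrix}\right)$ to $([a],[d])$. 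Since $\rH^1(\Q,\bar\Q_p)=\bar\Q_p\cdot[\eta_\mathbbm{1}]$ is one-dimensional, writing $a=\alpha\eta_\mathbbm{1}$ and $d=\delta\eta_\mathbbm{1}$ gives coordinates $(\alpha,\delta)\in\bar\Q_p^2$ on $t^{\ord}_{\rho}$, and symmetrically $(\alpha',\delta')$ on $t^{\ord}_{\rho'}$.

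A direct matrix computation gives $\tr((1+\epsilon X)\rho)=\tr(\rho)+\epsilon(\alpha\phi+\delta)\eta_\mathbbm{1}$ and $\tr((1+\epsilon X')\rho')=\tr(\rho')+\epsilon(\alpha'+\delta'\phi)\eta_\mathbbm{1}$. The functions $\eta_\mathbbm{1}$ and $\phi\eta_\mathbbm{1}$ on $\G_\Q$ are $\bar\Q_p$-linearly independent because $\phi$ takes distinct values on various $\Frob_\ell$ at which $\eta_\mathbbm{1}(\Frob_\ell)=\log_p(\ell)\neq 0$; hence condition (i) on traces forces $\alpha=\delta'$ and $\delta=\alpha'$, and the determinant condition $\alpha+\delta=\alpha'+\delta'$ is then automatic. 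For condition (ii), the proof of Proposition \ref{lemmatd} identifies the unramified quotient character of $\rho_\epsilon$ as $\chi_\epsilon(g)=1+\epsilon(d(g)-y\eta(g))$ with $y$ uniquely determined by $d|_{\I_{\Q_p}}=y\cdot\eta|_{\I_{\Q_p}}$; since both sides are scalar multiples of $\log_p$, one finds $y=\delta$. Substituting $d=\delta\eta_\mathbbm{1}$ and applying Proposition \ref{L-invariant} yields $\chi_\epsilon(\Frob_p)=1-\epsilon\delta\cdot(\eta-\eta_\mathbbm{1})(\Frob_p)=1-\epsilon\delta\cL(\phi^{-1})$, and the analogous computation for $\rho'$ (whose restriction to $\G_{\Q_p}$ has the same shape as that of $\rho$ because $\phi|_{\G_{\Q_p}}=1$) gives $\chi'_\epsilon(\Frob_p)=1-\epsilon\delta'\cL(\phi)$. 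Thus condition (ii) becomes $\delta\cL(\phi^{-1})=\alpha\cL(\phi)$ after substituting $\delta'=\alpha$.

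The tangent space $t_{\cusp}$ is therefore cut out inside $\bar\Q_p^4$ by the three linear equations $\alpha=\delta'$, $\delta=\alpha'$, and $\delta\cL(\phi^{-1})=\alpha\cL(\phi)$. By Proposition \ref{independence log}(i) both $\cL(\phi)$ and $\cL(\phi^{-1})$ are nonzero, so the Frobenius equation is nontrivial and independent of the first two, giving $\dim t_{\cusp}=1$. Imposing additionally the fixed-determinant condition $\alpha+\delta=0$ (which automatically yields $\alpha'+\delta'=0$), the Frobenius equation becomes $\alpha(\cL(\phi)+\cL(\phi^{-1}))=0$; the $\bar\Q$-linear independence of $\cL(\phi)$ and $\cL(\phi^{-1})$ from Proposition \ref{independence log}(i) (or their common nonvanishing when $\phi=\phi^{-1}$ is quadratic) forces $\cL(\phi)+\cL(\phi^{-1})\neq 0$, whence $\alpha=\delta=0$ and $t_{\cusp}^0=\{0\}$. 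The main obstacle is bookkeeping: correctly translating the extrinsic conditions defining $\cD_{\cusp}$ into linear equations in the intrinsic cohomological coordinates and then applying the transcendence input of Baker--Brumer through the explicit Frobenius evaluation in Proposition \ref{L-invariant}.
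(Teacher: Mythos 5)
Your proposal is correct and follows essentially the same route as the paper's proof: identify $t^{\ord}_{\rho}\simeq \rH^1(\Q,W_\rho)$ via Proposition \ref{lemmatd}, coordinatize by the diagonal cocycle entries (your $(\alpha,\delta)$ match the paper's $(\lambda,\mu)$), translate conditions (i) and (ii) of $\cD_{\cusp}$ into the linear system $\alpha=\delta'$, $\delta=\alpha'$, $\delta\cL(\phi^{-1})=\alpha\cL(\phi)$ using Proposition \ref{L-invariant}, and conclude from the nonvanishing and $\bar\Q$-linear independence of the $\cL$-invariants in Proposition \ref{independence log}. The only cosmetic difference is that the paper obtains coordinates on $t^{\ord}_{\rho'}$ by twisting $\rho'$ by $\phi^{-1}$ to reduce to the $\rho$-computation, whereas you work directly with $\rho'$, and your derivation of the cohomological coordinates via the exact sequence $0\to\phi\to W_\rho\to\mathbbm{1}^2\to 0$ is a minor repackaging of Remark \ref{rem-basis}.
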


\begin{proof} By definition 
\[t_{\cusp}=\cD_{\cusp}(\bar\Q_p[\epsilon])=\left\{(\rho_\epsilon, \rho'_\epsilon) \in t^{\ord}_{\rho} \times t^{\ord}_{\rho'} \mid
\tr(\rho_\epsilon)= \tr(\rho'_\epsilon)\text{ and } \chi_\epsilon(\Frob_p)=\chi'_\epsilon(\Frob_p)\right\}.\]
 By the proof of Proposition~\ref{lemmatd} an element $[\rho_\epsilon]\in t^{\ord}_{\rho}\simeq t^{\red}_{\rho}\simeq\rH^1(\Q, W_\rho)$ can be written as: 
\[ \rho_\epsilon =\left(1+\epsilon 
\begin{pmatrix} a & b\\ 0 & d \end{pmatrix} \right )\rho=\begin{pmatrix}
\phi(1+\epsilon a) & \eta (1+\epsilon a)+b \epsilon\\ 0 & 1+\epsilon d \end{pmatrix}  \]
for $ \left [ \begin{smallmatrix} a & b\\ 0 & d  \end{smallmatrix} \right ] \in \rH^1(\Q, W_\rho). 
$
In particular $a, d \in \rH^1(\Q, \bar\Q_p)$. Recall the generator $\eta_\mathbf{1}$ of $\rH^1(\Q, \bar\Q_p)$ whose restriction at $\I_{\Q_p}$ is $\log_p$. 
Writing
$a=\lambda \eta_\mathbf{1}$,  $ d=\mu  \eta_ \mathbf{1} $ with $\lambda, \mu \in \bar\Q_p$ yields: 
 \begin{align}\label{cusp-trace}
\tr(\rho_\epsilon)=1+\phi+\epsilon( \lambda \phi+ \mu) \eta_\mathbf{1}
\text{ and }  \det(\rho_\epsilon)=\phi(1+\epsilon( \lambda + \mu) \eta_\mathbf{1}). 
\end{align}
By \eqref{eq:ord-line},  the ordinary filtration $F_{\rho_\epsilon}$ has basis $e_1+\epsilon\mu e_2$ and 
$\rho_\epsilon(\G_{\Q_p})$ acts on the quotient by the character $ \chi_\epsilon=1+\epsilon(d-\mu \eta)$. 
  It follows from  Proposition~\ref{L-invariant} that 
 \begin{align}\label{U_p equation1}  
 \chi_\epsilon(\Frob_p)=1+\epsilon \mu (\eta_\mathbf{1}-\eta)(\Frob_p)=1-\mu \cL(\phi^{-1}) \epsilon.
\end{align}

Since $\rho'=\left( \begin{smallmatrix} \mathbf{1} & \phi \eta'\\ 0 & \phi  \end{smallmatrix}\right)
=\left( \begin{smallmatrix} \phi^{-1} &  \eta'\\ 0 & \mathbf{1}   \end{smallmatrix}\right)\otimes \phi$ one can  describe  
$t^{\ord}_{\rho'}\simeq t^{\ord}_{\rho'\otimes\phi^{-1} }$ by simply replacing $\phi$ by $\phi^{-1}$ and $\eta$ by $\eta'$ in 
the above description of $t^{\ord}_{\rho}$. One then finds that: 
  \begin{align}\label{U_p equation2}
\begin{split}
\tr(\rho'_\epsilon)=\phi(1+\phi^{-1}+\epsilon( \lambda' \phi^{-1}+ \mu') \eta_\mathbf{1})=
1+\phi+ \epsilon(\lambda' + \mu'\phi) \eta_\mathbf{1}, \\  
 \chi'_\epsilon(\Frob_p)
=1+\epsilon \mu' (\eta_\mathbf{1}-\eta')(\Frob_p)=1- \mu' \cL(\phi) \epsilon. 
\end{split}
\end{align}
From \eqref{cusp-trace}, \eqref{U_p equation1} and \eqref{U_p equation2} one sees that 
  \begin{align} \label{system}
  (\rho_\epsilon, \rho'_\epsilon)\in t_{\cusp}\iff \lambda=\mu',\, \mu=\lambda' \text{  and }  \mu \cL(\phi^{-1}) =\lambda \cL(\phi).
\end{align}
By Proposition~\ref{independence log},
$\cL(\phi) $ and $\cL(\phi^{-1})$ are both non-zero, hence $\dim t_{\cusp}=1$. 

To compute the relative tangent space $t_{\cusp}^0$ it suffices to add to  \eqref{system} the condition $\det \rho_\epsilon=\phi$, which is equivalent to 
$\lambda+\mu=0$. By Proposition~\ref{independence log}, $\cL(\phi) $ and $\cL(\phi^{-1})$ are linearly independent over $\bar\Q$ if 
 $\phi$ is not quadratic, while when $\phi$ quadratic one has $\cL(\phi)=\cL(\phi^{-1}) \neq 0$. 
In either case  $\cL(\phi)  +\cL(\phi^{-1})\neq 0$, hence   the equation $\lambda+\mu=0$ is linearly independent from  \eqref{system}, and  $\dim_{\bar\Q_p}t_{\cusp}^0=0$.
 \end{proof}

\begin{cor}
We have $t^{\ord}_{\rho}=t_{\cusp} \oplus t^{\eis}_{\rho}$. 
\end{cor}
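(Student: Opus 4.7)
The plan is to reduce the statement to a dimension count combined with a trivial-intersection argument, using the explicit parametrisations of tangent spaces already obtained. First, by Lemma \ref{lem-surj} the natural homomorphism $\cR^{\ord}_{\rho}\twoheadrightarrow \cR_{\cusp}$ is surjective, hence the induced map on tangent spaces $t_{\cusp}\hookrightarrow t^{\ord}_{\rho}$ is injective, and one may regard $t_{\cusp}$ as a subspace of $t^{\ord}_{\rho}$. From \eqref{dim t_R} one has $\dim_{\bar\Q_p} t^{\ord}_{\rho}=2$ and $\dim_{\bar\Q_p} t^{\eis}_{\rho}=1$, while Proposition \ref{cuspidal tangent} gives $\dim_{\bar\Q_p} t_{\cusp}=1$. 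Thus it suffices to prove that $t_{\cusp}\cap t^{\eis}_{\rho}=\{0\}$ inside $t^{\ord}_{\rho}$.

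To verify this, I will use the parametrisations that appear in the proof of Proposition \ref{cuspidal tangent}. Recall that every class in $t^{\ord}_{\rho}\simeq \rH^1(\Q,W_\rho)$ is represented, in the basis $(e^{-}_\epsilon,e^{+}_\epsilon)$ from Remark \ref{rem-basis}, by a cocycle $\left(\begin{smallmatrix} a & b\\ 0 & d\end{smallmatrix}\right)$ with $a=\lambda\eta_{\mathbbm{1}}$ and $d=\mu\eta_{\mathbbm{1}}$ for some $(\lambda,\mu)\in\bar\Q_p^2$. The image of $t_{\cusp}$ in $t^{\ord}_{\rho}$ is cut out, by equation \eqref{system} in the proof of Proposition \ref{cuspidal tangent}, by the single linear relation
\[
\mu\,\cL(\phi^{-1})=\lambda\,\cL(\phi).
\]
On the other hand, by Remark \ref{rem-basis}, $t^{\eis}_{\rho}$ corresponds precisely to those classes for which $d=0$, i.e.\ $\mu=0$.

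Suppose a class belongs to both subspaces. Then $\mu=0$ and therefore $\lambda\,\cL(\phi)=0$. By Proposition \ref{independence log}\eqref{independence alpha psi} the $\cL$-invariant $\cL(\phi)$ is non-zero, whence $\lambda=0$ and the class is trivial. This gives $t_{\cusp}\cap t^{\eis}_{\rho}=\{0\}$, and combining with the dimension count $\dim t_{\cusp}+\dim t^{\eis}_{\rho}=1+1=2=\dim t^{\ord}_{\rho}$ yields the desired direct sum decomposition. The only conceptual ingredient is the non-vanishing of $\cL(\phi)$, which is the key point where Baker--Brumer transcendence enters.
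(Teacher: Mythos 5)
Your proof is correct and follows essentially the same approach as the paper's: both identify $t^{\ord}_{\rho}$ with the $(\lambda,\mu)$-plane via the parametrisation from Proposition~\ref{cuspidal tangent}, characterise $t_{\cusp}$ by $\mu\,\cL(\phi^{-1})=\lambda\,\cL(\phi)$ and $t^{\eis}_{\rho}$ by $\mu=0$ via Remark~\ref{rem-basis}, and invoke $\cL(\phi)\neq 0$ from Proposition~\ref{independence log} to see these are distinct lines in a $2$-dimensional space. Your appeal to Lemma~\ref{lem-surj} to justify the embedding $t_{\cusp}\hookrightarrow t^{\ord}_{\rho}$ is a small explicit addition that the paper leaves implicit, but the substance of the argument is identical.
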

\begin{proof} As in the proof of Proposition~\ref{cuspidal tangent} one can use 
$\lambda$ and $\mu$ as coordinates on $t^{\ord}_{\rho}$ and by \eqref{system} the equation defining $t_{\cusp}$ is  $\mu \cL(\phi^{-1}) =\lambda \cL(\phi)$. On the other hand  by  \eqref{eq:t-eis} the equation defining $t^{\eis}$ is  $\mu=0$, and we 
 have seen that $\cL(\phi)\neq 0$.  
\end{proof}

\subsection{Iwasawa cohomology}\label{iwasawa-coh}
Let $\varepsilon_p:\G_\Q  \twoheadrightarrow \Gal(\Q(\mu_{p^{\infty}})/\Q) = \Z_p^\times$ be the $p$-adic 
 cyclotomic character and let  $\omega_p:\G_\Q\twoheadrightarrow \Gal(\Q(\mu_{2p})/\Q) = (\Z/2p\Z)^\times\to  \Z_p^\times$ be the $p$-adic  Teichm\"uller character. Let $\nu=2$ if $p=2$ and $\nu=1$ otherwise.  
 The cyclotomic $\Z_p$-extension  $\Q_\infty$ of $\Q$ is the fixed field of 
 \[\varepsilon_p\omega_p^{-1}:\G_\Q\to 1+p^\nu\Z_p.\] 
The universal cyclotomic character   $\chi_{p}: \G_\Q\to \varLambda^\times$ is obtained by composing 
$\varepsilon_p\omega_p^{-1}$ with 
\begin{align}\label{eq:univ-cyc}
1+p^\nu\Z_p \to \Z_p\lsem 1+p^\nu\Z_p\rsem^\times \xrightarrow{\sim}\Z_p\lsem X \rsem^\times\hookrightarrow \varLambda^\times,
 \end{align}
where the isomorphism in the middle sends  $1+p^\nu$ to $1+X$. 
 One has: 
\begin {align} \label{eq:univ-cyc-modX2}
 \chi_{p} \equiv1 - \frac{\eta_\mathbf{1}}{\log_p(1+p^\nu)}X\pmod{X^2}  , 
 \end{align}
where  $\eta_\mathbf{1}:\G_\Q\to \bar\Q_p$ is the cyclotomic homomorphism defined in \S\ref{def-eta}
sending $ \Frob_\ell$ to $-\log_p(\ell)$ for all $\ell\neq p$. It follows that 
 \begin{align}\label{derivative-chi-cyc}
 \left.\tfrac{d}{dX}\right|_{X=0}\chi_{p}= - \tfrac{\eta_\mathbf{1}}{\log_p(1+p^\nu)}.
 \end{align}

Using Class Field Theory  one can show that 
 $\Phi=\phi \chi_{p}$ is the  deformation of $\phi$ to its universal deformation ring $\varLambda$ (see \cite[\S6]{bellaiche-dimitrov}).  For
 $n\in \Z_{\geqslant 1}$,  we let $\Phi_n=\Phi\mod{X^n}$.

We will prove the existence of a non-torsion cohomology class in the Iwasawa cohomology group $\rH^1(\G_{\Q}^{Np},\Phi)$, where $\G_{\Q}^{Np}$ is the Galois group of the maximal extension of $\Q$ unramified outside  $Np$ and $\infty$.  This will be  used in \S\ref{sec:Eisenstein} to show that there exists a surjection $\cR^{\eis}_\rho \twoheadrightarrow \varLambda$. 
 
\begin{prop} \label{def-red-rho}
\begin{enumerate}
\item 
One has $\rH^2(\G_{\Q}^{Np}, \Phi^{\pm 1})=0$. Moreover $\rH^1(\G_{\Q}^{Np}, \Phi^{\pm 1})$ is a free $\varLambda$-module of rank $1$ and  
for all $n\in \Z_{\geqslant 1}$   the natural homomorphism is an isomorphism: 
\[ \rH^1(\G_{\Q}^{Np}, \Phi^{\pm 1})\otimes_\varLambda \varLambda/(X^n)
\xrightarrow{\sim} \rH^1(\G_{\Q}^{Np}, \Phi_n^{\pm 1}).\]
\item  For every $n\geqslant 1$,  the natural restriction map is injective:
\[ \rH^1(\G_{\Q}^{Np}, \Phi_n^{\pm 1}) \to \rH^1(\Q_p, \Phi_n^{\pm 1}).\]
\end{enumerate}
\end{prop}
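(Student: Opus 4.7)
The plan is to prove both parts simultaneously by induction on $n$, using the two short exact sequences of $\G_\Q$-modules
\begin{align*}
(A_n) & \quad 0 \to \phi^{\pm 1} \xrightarrow{X^{n-1}} \Phi_n^{\pm} \to \Phi_{n-1}^{\pm} \to 0, \\
(B_n) & \quad 0 \to \Phi_{n-1}^{\pm} \xrightarrow{X} \Phi_n^{\pm} \to \phi^{\pm 1} \to 0,
\end{align*}
together with the base-case data $\rH^0(G_\Q^{Np}, \phi^{\pm 1}) = 0$ (by oddness of $\phi$), $\dim_{\bar\Q_p} \rH^1(G_\Q^{Np}, \phi^{\pm 1}) = 1$ generated by $[\eta^{\pm}]$ (\S\ref{def-eta}), and $\rH^2(G_\Q^{Np}, \phi^{\pm 1}) = 0$ (global Euler characteristic).

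For part (i), the long exact sequence associated to $(A_n)$ together with the inductive hypothesis at level $n-1$ gives $\rH^0(G_\Q^{Np}, \Phi_n^{\pm}) = \rH^2(G_\Q^{Np}, \Phi_n^{\pm}) = 0$ and $\dim \rH^1(G_\Q^{Np}, \Phi_n^{\pm}) = n$. The long exact sequence of $(B_n)$ then yields a short exact sequence
\[0 \to \rH^1(G_\Q^{Np}, \Phi_{n-1}^{\pm}) \xrightarrow{X} \rH^1(G_\Q^{Np}, \Phi_n^{\pm}) \to \rH^1(G_\Q^{Np}, \phi^{\pm 1}) \to 0,\]
identifying $X \cdot \rH^1(G_\Q^{Np}, \Phi_n^{\pm})$ with the kernel of reduction mod $X$. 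Since the cokernel is one-dimensional, Nakayama forces $\rH^1(G_\Q^{Np}, \Phi_n^{\pm})$ to be cyclic over $\varLambda/(X^n)$, and the dimension count upgrades this to freeness of rank $1$. Passing to the inverse limit along the (surjective) transition maps yields $\rH^1(G_\Q^{Np}, \Phi^{\pm}) \simeq \varLambda$ and $\rH^2(G_\Q^{Np}, \Phi^{\pm}) = 0$; the base-change isomorphism follows from the long exact sequence of $0 \to \Phi^{\pm} \xrightarrow{X^n} \Phi^{\pm} \to \Phi_n^{\pm} \to 0$ using $\rH^2(G_\Q^{Np}, \Phi^{\pm}) = 0$.

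For part (ii), the base case $n = 1$ is immediate from the construction of $[\eta^{\pm}]$ in \S\ref{def-eta}: its restriction to $\rH^1(\I_{\Q_p}, \bar\Q_p)$ is $\log_p \neq 0$. For the inductive step, suppose $[c] \in \rH^1(G_\Q^{Np}, \Phi_n^{\pm})$ has trivial restriction to $\rH^1(\Q_p, \Phi_n^{\pm})$. Its image in $\rH^1(G_\Q^{Np}, \Phi_{n-1}^{\pm})$ then also has trivial restriction, so vanishes by induction, and the short exact sequence from $(A_n)$ in global cohomology writes $[c] = \iota_*[c']$ for a unique $[c'] \in \rH^1(G_\Q^{Np}, \phi^{\pm 1})$. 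Commutativity with restriction to $\G_{\Q_p}$ forces $[c']_{|\G_{\Q_p}}$ into the kernel of $\iota_{p, *} : \rH^1(\Q_p, \phi^{\pm 1}) \to \rH^1(\Q_p, \Phi_n^{\pm})$, which by the local long exact sequence of $(A_n)$ equals the image of the connecting map $\delta_p : \rH^0(\Q_p, \Phi_{n-1}^{\pm}) \to \rH^1(\Q_p, \phi^{\pm 1})$.

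The main obstacle is thus to show $[c'] = 0$. Computing, $\rH^0(\Q_p, \Phi_{n-1}^{\pm})$ is one-dimensional spanned by $\overline{X^{n-2}}$ (using that $\phi$ is trivial on $\G_{\Q_p}$), and applying \eqref{eq:univ-cyc-modX2} yields
\[\delta_p(\overline{X^{n-2}})(g) \equiv X^{n-1} \cdot \eta_\mathbbm{1}(g)/\log_p(1+p^\nu) \pmod{X^n}.\]
After identifying $X^{n-1}\varLambda/(X^n)$ with $\phi^{\pm 1}$, the image of $\delta_p$ is the line in $\rH^1(\Q_p, \phi^{\pm 1}) = \rH^1(\Q_p, \bar\Q_p)$ spanned by $[\eta_\mathbbm{1}]_{|\G_{\Q_p}}$. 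On the other hand, $[\eta^{\pm}]_{|\G_{\Q_p}}$ spans the image of the global restriction map, and by Proposition \ref{L-invariant} differs from $[\eta_\mathbbm{1}]_{|\G_{\Q_p}}$ by an unramified class whose Frobenius eigenvalue $\cL(\phi^{\mp 1})$ is non-zero by Proposition \ref{independence log}. The two classes are therefore linearly independent in $\rH^1(\Q_p, \bar\Q_p)$, forcing $[c'] = 0$ and completing the induction.
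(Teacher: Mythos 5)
Your proof is correct, and both parts take a genuinely different route from the paper, so it is worth comparing them. For part (ii), the paper works exclusively with the short exact sequence $(B_n)$: $0 \to \Phi_{n-1}^{\pm} \xrightarrow{X} \Phi_n^{\pm} \to \phi^{\pm 1} \to 0$, builds the $2\times 4$ commutative diagram \eqref{eq:les-Phi-nloc}, and reduces injectivity of $\res_n$ to showing $\mathrm{Im}(\res_{n-1}) \cap \mathrm{Im}(\delta_{n-1}) = \{0\}$. To control this intersection it needs the freeness of $\rH^1(G_\Q^{Np}, \Phi^{\pm})$ from part (i) (so that the $X$-torsion of $\rH^1(G_\Q^{Np}, \Phi_{n-1}^{\pm})$ is the line through $X^{n-2}\eta_{\Phi^\pm}$), and it splits into a case $n\geqslant 3$ (automatic, because a torsion element of $\mathrm{Im}(\res_{n-1})$ dies after reduction mod $X$ while elements of $\mathrm{Im}(\delta_{n-1})$ do not) and a base case $n=2$ (linear independence of $\res(\eta_{\phi^\pm})$ and $\res(\eta_\mathbbm{1})$). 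You instead run the induction along the \emph{other} sequence $(A_n)$: $0\to\phi^{\pm 1}\xrightarrow{X^{n-1}}\Phi_n^\pm\to\Phi_{n-1}^\pm\to 0$. This kills the image of a putative kernel class in $\rH^1(G_\Q^{Np},\Phi_{n-1}^\pm)$ by induction, funnels the class into the one-dimensional $\rH^1(G_\Q^{Np},\phi^{\pm 1})$, and compares the image of the local connecting map $\delta_p$ (which you correctly compute via \eqref{eq:univ-cyc-modX2} to be the line through $\res(\eta_\mathbbm{1})$) with the line through $\res(\eta_{\phi^\pm})$. The payoff is a uniform argument with no case split and no explicit appeal to the $\varLambda$-module structure of $\rH^1(G_\Q^{Np},\Phi^\pm)$ from part (i); the two approaches nevertheless bottom out at the same transcendence input, namely Proposition \ref{L-invariant} plus $\cL(\phi^{\mp 1})\neq 0$ from Proposition \ref{independence log}. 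For part (i), your combination of $(A_n)$ for the dimension count and $(B_n)$ plus Nakayama for cyclicity gives the same answer as the paper; the one small gap is the phrase ``passing to the inverse limit,'' which implicitly asserts $\rH^1(G_\Q^{Np},\Phi^\pm)\cong\varprojlim_n\rH^1(G_\Q^{Np},\Phi_n^\pm)$ --- this is a continuity-of-cohomology statement that needs a reference (the paper sidesteps it by invoking \cite[Prop.3.5.1.3, Prop.4.2.3]{Nek} directly for finite generation over $\varLambda$ and the long exact sequence of $0\to\Phi^\pm\xrightarrow{X^n}\Phi^\pm\to\Phi_n^\pm\to 0$).
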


\begin{proof}
(i) The short exact sequence of $\bar\Q_p[\G_{\Q}^{Np}]$-modules
 \[
0 \to \Phi_{n-1}^{\pm 1} \xrightarrow{\cdot  X}  \Phi_n^{\pm 1} \to \phi^{\pm 1} \to 0 
\]
yields  for $i\geqslant 1$ a long exact sequence in cohomology 
\begin{align}\label{eq:les-Phi-n}
 \rH^{i-1}(\G_{\Q}^{Np}, \phi^{\pm 1})
 \to \rH^i(\G_{\Q}^{Np}, \Phi_{n-1}^{\pm 1}) \xrightarrow{\cdot  X}  \rH^i(\G_{\Q}^{Np}, \Phi_n^{\pm 1}) \to  \rH^i(\G_{\Q}^{Np}, \phi^{\pm 1}).
\end{align}
As $\rH^2(\G_{\Q}^{Np},\phi^{\pm 1})=0$  by the global Euler characteristic formula, one
obtains   $\rH^2(\G_{\Q}^{Np}, \Phi_n^{\pm 1})=\{0\}$ by induction on $n$  (noting that $\Phi_1=\phi$).  
Similarly, as $\dim\rH^1(\Q,\phi^{\pm 1})=1$ and $\rH^0(\Q,\phi^{\pm 1})=\{0\}$, one deduces that  $\dim_{\bar\Q_p} \rH^1(\G_{\Q}^{Np}, \Phi_n^{\pm 1})=n$ for all $n\in \Z_{\geqslant 1}$. 

The short exact sequence of $\varLambda[\G_{\Q}]$-modules
\begin{align*}
0 \to \Phi^{\pm 1} \xrightarrow{\cdot X^n} \Phi^{\pm 1}\to \Phi_n^{\pm 1} \to 0 
\end{align*}
yields a long  exact sequence of $\varLambda$-modules in cohomology  (see \cite[Proposition~3.5.1.3]{nekovar-book}): 
\begin{align}\label{les-Lambda}
0 \to \rH^1(\G_{\Q}^{Np}, \Phi^{\pm 1}) \xrightarrow{\cdot X^n} \rH^1(\G_{\Q}^{Np}, \Phi^{\pm 1}) \to  \rH^1(\G_{\Q}^{Np}, \Phi_n^{\pm 1}) 
 \to \rH^2(\G_{\Q}^{Np}, \Phi^{\pm 1}) \xrightarrow{\cdot X^n} \rH^2(\G_{\Q}^{Np}, \Phi^{\pm 1}) \to 0
\end{align}
According to \cite[Proposition~4.2.3]{nekovar-book},  $\rH^i(\G_{\Q}^{Np}, \Phi^{\pm 1})$ is a  $\varLambda$-module of finite type for $i \in \{0,1,2\}$. Therefore, Nakayama's lemma applied to \eqref{les-Lambda} for $n=1$
implies that $\rH^2(\G_{\Q}^{Np}, \Phi^{\pm 1})=\{0\}$ while $\rH^1(\G_{\Q}^{Np}, \Phi^{\pm 1})$ is a cyclic $\varLambda$-module. 
Moreover \eqref{les-Lambda} for  an arbitrary $n$ yields 
\[\rH^1(\G_{\Q}^{Np}, \Phi^{\pm 1})\otimes_\varLambda \varLambda/(X^n) \simeq \rH^1(\G_{\Q}^{Np}, \Phi_n^{\pm 1})\] 
which has dimension $n$. Hence  $\rH^1(\G_{\Q}^{Np}, \Phi^{\pm 1})$ is a free $\varLambda$-module of rank $1$.

(ii) 
Using (i), the long exact sequence  \eqref{eq:les-Phi-n} yields a commutative diagram with exact rows: 
\begin{align}\label{eq:les-Phi-nloc} 
\xymatrix{ 
0 \ar[r] &   \rH^1(\G_{\Q}^{Np}, \Phi_{n-1}^{\pm 1}) \ar^{\cdot  X}[r] \ar_{\res_{n-1}}[d] &
\rH^1(\G_{\Q}^{Np}, \Phi_n^{\pm 1})  \ar@{->>}[r]\ar_{\res_n}[d]  & \rH^1(\G_{\Q}^{Np}, \phi^{\pm 1}) \ar_{\res}[d] \\
\rH^{0}(\G_{\Q_p}, \mathbf{1}) \ar^{\delta_{n-1}}[r]  & \rH^1(\G_{\Q_p}, \Phi_{n-1}^{\pm 1}) \ar^{\cdot  X}[r]   & \rH^1(\G_{\Q_p}, \Phi_n^{\pm 1})  \ar@{->>}[r] & \rH^1(\G_{\Q_p}, \mathbf{1})}
\end{align}
where the vertical arrows are the restriction maps. 
 Note that   the image of  the connecting homomorphism $\delta_{n-1}$
 is generated by the cohomology class of the cocycle $g \mapsto \frac{\chi_{p}^{\pm 1}(g)-1}{X}\pmod{X^{n-1}}$ which belongs to the $X$-torsion, because  $g \mapsto \chi_{p}^{\pm 1}(g)-1$ is a coboundary. 

We argue by induction on $n$. The  injectivity of  $\res=\res_1$  follows from Class Field Theory.  
Suppose that $\res_{n-1}$ is injective for some $n\geqslant 2$. It suffices then to show that
 \[ \mathrm{Im}(\res_{n-1}) \cap \mathrm{Im}(\delta_{n-1})=\{0\}. \] 
Let $[\eta_{\Phi^{\pm 1}}]$ be a generator of $\rH^1(\G_{\Q}^{Np}, \Phi^{\pm 1})$ as a free rank one $\varLambda$ module, and let  $[\eta_{\Phi^{\pm 1}_n}]$ denote its image in $\rH^1(\G_{\Q}^{Np}, \Phi_n^{\pm 1})$. 
Since  $\mathrm{Im}(\delta_{n-1})$ is $X$-torsion and $\res_{n-1}$ is injective,  an element of the above intersection is a scalar multiple of  $\res_{n-1}(X^{n-2} [\eta_{\Phi^{\pm 1}_{n-1}}])$. Moreover, as any non-trivial element of $\mathrm{Im}(\delta_{n-1})$ remains 
non-trivial when letting $X=0$, the above intersection automatically vanishes for $n\geqslant 3$. Finally for $n=2$, in virtue of   \eqref{eq:univ-cyc-modX2} one has to show that 
 $\res([\eta_{\phi^{\pm 1}}])$ and $\res(\eta_\mathbf{1}) $
generate distinct lines in $ \rH^1(\G_{\Q_p}, \mathbf{1})$ which follows from the fact that 
they have the same restriction to the inertia group, while taking different values on the Frobenius by 
Proposition~\ref{L-invariant}.  \qedhere
 \end{proof}

\section{\texorpdfstring{$p$}{}-adic families containing \texorpdfstring{$f$}{}}\label{sec:eigencurve}

In this section, we show that the completed strict local ring $\cT_{\cusp}$ of the cuspidal eigencurve $\cC_{\cusp}$ at  $f$ is isomorphic to the deformation ring $\cR_{\cusp}$ and conclude that the cuspidal eigencurve is \'etale over the weight space at $f$.

\subsection{Some basic facts on  the eigencurve}\label{basic-eigencurve}
Let $X/ \Z_p$ be the proper smooth modular curve of level (see \cite[Chap.~IV]{deligne-rapoport})
\[\Gamma=\begin{cases} \Gamma_1(N), &  \text{ if } N\geqslant 4,\\ 
\Gamma(3), &  \text{ if } N=3. \end{cases} \] 
 Let  $E \to X$ be the generalised elliptic curve endowed with the identity section $e:X \to E$, $\omega=e^*(\Omega_{E/X})$ be the conormal sheaf and $X^{\rig}$ be the rigid analytic space attached to the generic fibre of $X$ (note that by properness $X^{\rig}(\bar\Q_p)=X(\bar\Q_p)$). The analytification of $\omega$  is an invertible sheaf on $X^{\rig}$ and will be denoted  again by $\omega$.
 
 For $v\in \Q_{\geqslant 0}$ let $X(v)$ denote the open locus of $X^{\rig}$ where the truncated valuation of the Hasse invariant is at most $v$ (see \cite[\S3.1]{pilloni}); in particular  $X(0)$  is the ordinary locus. 
 We recall that  the weight space $\cW$  is  the rigid analytic space over $\Q_p$ such that  
\[\cW(\C_p)=\Hom_{\mathrm{cont}}(\Z_p^{\times}\times (\Z /N \Z)^{\times}, \C_p^\times).\]

For $\cU$  a connected open admissible affinoid of $\cW$ we let 
\[ \kappa_\cU: \Z_p^{\times}\times (\Z/N\Z)^\times \to \cO(\cU)^{\times} \]
 be the universal character, 
$\omega_\cU$ be the invertible sheaf on $X(v) \times \cU$  constructed in \cite[\S5.1]{pilloni}  under the assumption that either $v=0$,  or  that both $v>0$ and  $\cU$ are  sufficiently small.   
 By construction, for any  weight $k\in\Z_{\geqslant 1} \cap \cU $, 
 the sheaf $\omega_\cU$ specialises to  the invertible sheaf  $\omega^{\otimes k}$ on $X(v)$ (see \cite[Proposition~3.3]{pilloni}).

Consider the invertible sheaf $\omega_{\cU}(-D_\cU)$, where $D_\cU$ denotes the cuspidal divisor of $X(v)\times \cU$.
Note that $D_\cU$ does not depend on $v$ as the cusps of  $X$ all belong to  $X(0)$.
 For $N \geqslant 4$,  the space of families of overconvergent forms, resp. cuspforms, having  slope at most $s\in \Q_{\geqslant 0}$ is defined as: 
\[ M^{\dag,\leqslant s}_\cU= \varinjlim_{v>0} \rH^0(X(v) \times \cU,\omega_\cU)^{\leqslant s}\text{, resp.   }  
S^{\dag,\leqslant s}_\cU= \varinjlim_{v>0} \rH^0(X(v) \times \cU,\omega_{\cU}(-D_\cU))^{\leqslant s}.\]
The  space $M^{\dag,\leqslant s}_\cU$, resp.  $S^{\dag,\leqslant s}_\cU$,  is a locally free module of finite type   over the Banach algebra $\cO(\cU)$, and is contained  in the  space of families of $p$-adic forms, resp. cuspforms:
\[M_\cU=\rH^0( X(0) \times \cU,\omega_\cU)\text{, resp.   }   S_\cU=\rH^0( X(0) \times \cU,\omega_{\cU}(-D_\cU)).\]
For   $N=3$  we  define $M_\cU$ as the $(\Gamma_1(3)/\Gamma)$-invariants in $\rH^0( X(0) \times \cU,\omega_\cU)$ 
and we  similarly define $S_\cU$,  $M^{\dag,\leqslant s}_\cU$ and $S^{\dag,\leqslant s}_\cU$. 
We let $M^{\ord}_\cU= M^{\dag,\leqslant 0}_\cU$ denote the space of ordinary  families and by 
$S^{\ord}_\cU= S^{\dag,\leqslant 0}_\cU$ its cuspidal subspace. The notation is justified by the fact that any  $p$-adic form of slope $0$ is necessarily overconvergent (see  \cite[Proposition~6.2]{pilloni}).  

By construction  the eigencurve $\cC$ (resp. the cuspidal eigencurve $\cC_{\cusp}$) is a rigid analytic space  over $\Q_p$ admissibly covered  by  the affinoids attached to the  $\cO(\cU)$-algebras generated by the Hecke operators $T_\ell, \ell\nmid Np$ and $U_p$ acting on 
$\End_{\cO(\cU)}(M^{\dag,\leqslant s}_\cU)$ (resp. $\End_{\cO(\cU)}(S^{\dag,\leqslant s}_\cU)$), where both $s\in \Q_{\geqslant 0}$  and the open admissible affinoid $\cU\subset \cW$  vary. 
Thus, we obtain a closed immersion $\cC_{\cusp} \hookrightarrow \cC$ of  rigid curves and $\cC$ is endowed with a weight map $\kappa: \cC \to \cW$ which is  locally finite and flat. Moreover $\cC$  is reduced as the Hecke operators  act semi-simply on the generalised  eigenspace of any classical eigenform  which is regular at $p$ and has non-critical slope (see  \cite[Proposition~3.9]{chenevier-dmj}).  Similarly, one can obtain $\cC_{\rho'}$ (resp. $\cC_\rho$) by considering the $\cO(\cU)$-banach Hecke modules generated by $S^{\dag,\leqslant s}_\cU$ and the Eisenstein family $\cE_{\phi,\mathbf{1}}$ (resp. $\cE_{\mathbf{1},\phi}$).

There exists a ring homomorphism $\Z[T_{\ell},U_p]_{\ell\nmid Np} \to \cO_{\cC}(\cC)$ allowing one to  see $T_{\ell}$ and $U_p$ as    analytic functions on $\cC$ bounded by $1$. For any $k\in\cW$, the   points of $\kappa^{-1}(k)\subset \cC$ are in 
  bijection with the set of  systems of eigenvalues for   $\{T_\ell,U_p ; \ell \nmid Np\}$  acting on the space of  finite slope overconvergent  eigenforms of weight $k$ and tame level dividing $N$.

The locus of $\cC$ (resp. $\cC_{\cusp}$) where $|U_p|_p=1$ is open and closed in $\cC$ (resp. $\cC_{\cusp}$), and is called the ordinary locus. As the classical weights are Zariski dense in $\cW$, it follows from 
the classicality criterion for overconvergent forms (see \cite{coleman-ocmf}) 
that the classical points are Zariski dense in $\cC$. Moreover it follows from \cite[Corollary~2.6]{bellaiche-Inv}
that $\cC_{\cusp}$ has a Zariski dense set of points corresponding to classical cuspforms of weight at least $2$.

Since $\G_{\Q}$ is compact and  $\cO_{\cC}(\cC)$ is a reduced ring, there exists  (see \cite[\S7]{chenevier}) a unique continuous two-dimensional pseudo-character 
 \begin{align}\label{pseudo}
 \G_{\Q} \to \cO_{\cC}(\cC)
 \end{align}
 sending  $\Frob_\ell$ to $T_\ell$ for all $\ell \nmid Np$, and whose specialisation at  any  classical  point $g\in\cC(\bar\Q_p)$
 equals the  trace of the  Galois representation $\rho_g:\G_\Q \to \GL_2(\bar\Q_p)$ 
constructed by  Eichler-Shimura, Deligne and Deligne-Serre.

Let $X_{\mathrm{Iw}}/\Z_p$ be the proper flat semi-stable modular curve of level  $ \Gamma \cap \Gamma_0(p)$
 endowed with a  canonical   morphism $X_{\mathrm{Iw}} \to X$ obtained by forgetting the Iwahori $\Gamma_0(p)$-level structure
(see \cite[pp.101, 144]{deligne-rapoport}).  The theory of the canonical subgroup yields, for   $v> 0$ sufficiently small, a section 
of the latter morphism
 \begin{align}\label{can-section}
X(v)\to X^\times_{\mathrm{Iw}}(v),
\end{align}
where $X^\times_{\mathrm{Iw}}(v)$ is a neighbourhood of the connected (multiplicative) component  of the  ordinary locus in 
$X^{\rig}_{\mathrm{Iw}}$, containing the cusp $\infty$.  
 Given a classical modular form of level $\Gamma\cap \Gamma_0(p)$, 
  the pullback along this section of its restriction to  $X^\times_{\mathrm{Iw}}(v)$  yields an
overconvergent modular form having the same $q$-expansion at $\infty$. In particular,  classical eigenforms  of level 
$\Gamma_1(N)\cap \Gamma_0(p)$ give rise to points in $\cC$. Furthermore, if the classical  eigenform
vanishes at  all cusps of $X^\times_{\mathrm{Iw}}(v)$, {\it i.e.},  at all cusps lying in the $\Gamma_0(p)$-orbit of $\infty$, then the 
corresponding point belongs to  $\cC_{\cusp}$.

\subsection{Evaluation of ordinary families at cusps}\label{Ressection}
The connected  components of the cuspidal divisor $D_\cU$  of $ X(0) \times \cU$
are indexed by the finite  set $\Gamma\backslash \mathbb{P}^1(\Q)$ thus  is   fibered  over 
 $\Gamma_1(N)\backslash \mathbb{P}^1(\Q)$.

\begin{prop}\label{residuemapfamily}
Evaluation at  the cusps gives the following exact sequence  of $\cO(\cU)$-modules:
\begin{align}\label{eval-at-cups}
0 \to   S_\cU \to  M_\cU \xrightarrow{\res_\cU} \prod_{[\delta]\in \Gamma_1(N)\backslash \mathbb{P}^1(\Q)} \cO(\cU) \to 0.
\end{align}
\end{prop}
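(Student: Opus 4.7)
The exact sequence of sheaves
\[
0 \to \omega_{\cU}(-D) \to \omega_{\cU} \to \omega_{\cU}\otimes \cO_D \to 0
\]
on $X^{\ord}\times \cU$ (with $D$ the cuspidal divisor) induces, upon taking global sections, a four term exact sequence
\[
0 \to S_{\cU} \to M_{\cU} \xrightarrow{\res_\cU} \rH^0(X^{\ord}\times \cU, \omega_{\cU}\otimes\cO_D) \to \rH^1(X^{\ord}\times \cU, \omega_{\cU}(-D)).
\]
The proposition therefore reduces to two facts: a canonical identification of the third term with $\prod_{\delta\in D}\cO(\cU)$ and the vanishing of the fourth term.

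The plan for identifying $\rH^0(X^{\ord}\times \cU, \omega_{\cU}\otimes\cO_D)$ is to exhibit a canonical trivialisation of $\omega_{\cU}$ in a formal neighbourhood of each cusp. At the cusp $\infty$, the Tate curve degeneration yields the canonical differential $dq/q$ trivialising $\omega$, and by construction of Pilloni \cite{Pilloni} the universal sheaf $\omega_\cU$ is canonically generated by $(dq/q)^{\kappa_\cU}$ near $\infty$; the same holds at any other cusp $\delta\in D$ by translating. Since $D$ is a disjoint union of $\cO(\cU)$-points of $X^{\ord}\times \cU$, the restriction $\omega_\cU\otimes \cO_D$ is canonically isomorphic to $\bigoplus_{\delta\in D} \cO(\cU)$, giving the desired identification of the third term with $\prod_{\delta\in D}\cO(\cU)$.

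The surjectivity of $\res_\cU$ is the substantive step, and its proof rests on the classical fact that the ordinary locus $X^{\ord}$ of $X^{\rig}$ is an affinoid rigid space (as mentioned in the introduction). Since $\cU$ is an affinoid and the product of affinoids is affinoid, $X^{\ord}\times \cU$ is itself affinoid. The sheaf $\omega_{\cU}(-D)$ is coherent (indeed invertible) on this affinoid, so by Kiehl's theorem on coherent cohomology of affinoid rigid spaces (the rigid analytic analogue of Tate acyclicity/Cartan B) one has $\rH^1(X^{\ord}\times \cU, \omega_{\cU}(-D))=0$. This yields the surjectivity of $\res_\cU$, and combined with the previous paragraph gives the claimed exact sequence.

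The main obstacle in the argument is the affinoid property of $X^{\ord}$, which underlies the vanishing of $\rH^1$; everything else is essentially formal. Note that this approach crucially uses $v=0$: over a strict neighbourhood $X(v)$ with $v>0$ the space is no longer affinoid in the same direct way, and one would need a different argument, but the statement concerns $p$-adic (not overconvergent) families.
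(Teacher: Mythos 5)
Your proposal is correct and takes essentially the same route as the paper: both pass to the sheaf exact sequence $0 \to \omega_\cU(-D) \to \omega_\cU \to \omega_\cU\otimes\cO_D \to 0$ on $X^{\ord}\times\cU$ and then invoke Kiehl/Tate vanishing of $\rH^1$ for the coherent (invertible) sheaf $\omega_\cU(-D)$ on the affinoid $X^{\ord}\times\cU$. The only difference is that you spell out the trivialisation of $\omega_\cU$ along $D$ via $dq/q$, which the paper leaves implicit in the identification $\rH^0(D\times\cU,\omega_\cU)\simeq\prod_{\delta\in D}\cO(\cU)$, and you omit the paper's remark about taking $\Gamma_1(3)/\Gamma(3)$-invariants when $N=3$.
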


\begin{proof}We have an exact sequence of sheaves on $ X(0) \times \cU$:
\begin{align}\label{sheafexact}
0 \to \omega_{\cU}(-D_\cU) \to \omega_\cU \to \omega_\cU/\omega_{\cU}(-D_\cU) \to 0
\end{align}
where  the support of quotient sheaf $\omega_\cU/\omega_{\cU}(-D_\cU)$ is $D_\cU$.  Hence 
\[\rH^0( X(0) \times \cU,\omega_\cU/\omega_{\cU}(-D_\cU))=\rH^0(D_\cU,\omega_\cU/\omega_{\cU}(-D_\cU))
=\rH^0(D_\cU,\omega_\cU) =\prod_{[\delta]\in \Gamma\backslash \mathbb{P}^1(\Q)} \cO(\cU).\]

Since $ X(0) \times \cU$ is an affinoid and $\omega_{\cU}(-D_\cU)$ is a coherent sheaf (even invertible), one has
 \[\rH^1( X(0) \times \cU,\omega_{\cU}(-D_\cU))=0.\] 
 Applying the   functor global sections $\rH^0( X(0) \times \cU,\relbar) $ to \eqref{sheafexact}, 
 and further taking $(\Gamma_1(N)/\Gamma)$-invariants,   yields the desired result. 
\end{proof}

We recall that $M^{\ord}_\cU= e^{\ord}(M_\cU)$ and $S^{\ord}_\cU= e^{\ord}(S_\cU)$, where 
$e^{\ord}=\lim\limits_{n\to\infty}U_p^{n!}$ denotes Hida's  ordinary idempotent. Applying this idempotent to 
\eqref{eval-at-cups} yields the following result.

\begin{cor}\label{residuemorphismord} There exists a direct factor $C_\cU$ of $\displaystyle \prod_{[\delta]\in \Gamma_1(N)\backslash \mathbb{P}^1(\Q)} \cO(\cU)$  and 
an exact sequence  of $\cO(\cU)$-modules:
\begin{align}\label{eval-at-cups2}
0 \to   S^{\ord}_\cU \to  M^{\ord}_\cU \xrightarrow{\res_\cU} C_\cU \to 0.
\end{align}
\end{cor}

\begin{rem} Under the assumption that $p \geqslant 5$, Ohta gave in \cite{ohta-ES,ohta-eis} a  different description of  the residue map on the ordinary part  and proved its surjectivity. 
\end{rem}

\subsection{Construction of an irreducible deformation of \texorpdfstring{$\rho$}{}}
Consider the $p$-stabilised Eisenstein series $f$ as in the introduction. 
By   \cite[Proposition~1.3]{DLR1} (or Proposition~\ref{q-expa family} further below) 
the weight $1$ eigenform $f$ vanishes at  all cusps of $X_{\mathrm{Iw}}$  lying in the $\Gamma_0(p)$-orbit of $\infty$, hence 
as explained  above defines a point $f$ in $\cC_{\cusp}$, which lies in  the ordinary locus as $U_p(f)=1$. 
Recall that $\cT$ (resp. $\cT_{\cusp}$) denotes the 
 completed strict local ring   of $\cC$ (resp. $\cC_{\cusp}$) at $f$ and $\gm_\cT$ (resp. $\gm_{\cT_{\cusp}}$)  its maximal ideal. 

\begin{prop} \label{rhott} Let  $\chi_{\cT_{\cusp}}: \G_{\Q_p} \to \cT_{\cusp}^\times$ be 
 the unramified character sending  $\Frob_p$ to $U_p$.
\begin{enumerate}
\item There exists an irreducible  deformation $\rho_{\cT_{\cusp}}: \G_{\Q} \to \GL_2(\cT_{\cusp})$ of $\rho$ such that $\tr(\rho_{\cT_{\cusp}})(\Frob_\ell) = T_\ell$ for all $\ell \nmid Np$, and $\rho_{\cT_{\cusp} \mid \G_{\Q_p}}=\left(\begin{smallmatrix} * & * \\ 0 & \chi_{\cT_{\cusp}} \end{smallmatrix}\right)$.

\item There exists an irreducible deformation $\rho_{\cT_{\cusp}}':\G_{\Q} \to \GL_2(\cT_{\cusp})$ of $\rho'$ such that 
$\det(\rho_{\cT_{\cusp}}')= \det(\rho_{\cT_{\cusp}})$, $\tr( \rho_{\cT_{\cusp}}')= \tr( \rho_{\cT_{\cusp}})$
and $\rho'_{\cT_{\cusp} \mid \G_{\Q_p}}=\left(\begin{smallmatrix} * & * \\ 0 &  \chi_{\cT_{\cusp}} \end{smallmatrix}\right)$.
\end{enumerate}
\end{prop}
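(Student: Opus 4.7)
My plan is to construct $\rho_{\cT_{\cusp}}$ and $\rho'_{\cT_{\cusp}}$ as two distinct $\G_\Q$-stable $\cT_{\cusp}$-lattices inside a common generic representation $\rho_L: \G_\Q \to \GL_2(L)$, the two lattices being distinguished by which side of the residual reducibility degenerates modulo $\gm_{\cT_{\cusp}}$.

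First I would set up the generic picture. Since $\cT_{\cusp}$ is reduced it embeds into $L = \prod_i L_i$, and on each component $L_i$ the representation $\rho_{L_i}$ recalled just before the proposition is irreducible (the family is cuspidal, hence not Eisenstein). Let $\rho_L = \prod_i \rho_{L_i}$ and let $0 \to V'_L \to V_L \to V''_L \to 0$ be its ordinary $\G_{\Q_p}$-filtration, with $V''_L$ unramified and $\Frob_p$ acting as $U_p \in \cT_{\cusp}^\times$; this defines the character $\chi_{\cT_{\cusp}}:\G_{\Q_p}\to \cT_{\cusp}^\times$ appearing in the statement. By construction $\tr(\rho_L)(\Frob_\ell) = T_\ell$ for all $\ell \nmid Np$.

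For (i), I would fix an $L$-basis $(e_L, f_L)$ of $V_L$ with $V'_L = L e_L$ and write $\rho_L(g) = \bigl(\begin{smallmatrix} a(g) & b(g) \\ c(g) & d(g) \end{smallmatrix}\bigr)$; ordinarity forces $c_{\mid \G_{\Q_p}} = 0$ and $d_{\mid \G_{\Q_p}} = \chi_{\cT_{\cusp}}$. The generalised matrix algebra formalism of Bella\"iche--Chenevier applied to the pseudo-character $\tr\rho_L$ with residual decomposition $\phi + \mathbbm{1}$ gives $a, d \in \cT_{\cusp}$, and shows that the fractional $\cT_{\cusp}$-ideals $I_b, I_c$ of $L$ generated by the values of $b$ and $c$ satisfy $I_b \cdot I_c \subseteq \gm_{\cT_{\cusp}}$. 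Irreducibility of $\rho_L$ on each component forces $I_b, I_c \neq 0$, so one can choose $\lambda \in L^\times$ and rescale $f_L \mapsto \lambda f_L$ (effecting $b \mapsto \lambda b$, $c \mapsto \lambda^{-1} c$) to arrange $I_b = \cT_{\cusp}$ and hence $I_c \subseteq \gm_{\cT_{\cusp}}$. This produces $\rho_{\cT_{\cusp}}: \G_\Q \to \GL_2(\cT_{\cusp})$ whose reduction is upper-triangular with diagonal $(\phi, \mathbbm{1})$ and non-zero upper-right entry; by one-dimensionality of $\rH^1(\Q,\phi)$ and the canonical normalisation $\eta(\tau)=0$, a further diagonal conjugation identifies this reduction with $\rho$. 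The $\G_{\Q_p}$-stable sub-lattice $\cT_{\cusp}\cdot e_L$ then realises the ordinary shape prescribed in (i).

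Part (ii) would follow by the symmetric choice: rescale $f_L$ so that $I_c = \cT_{\cusp}$ and $I_b \subseteq \gm_{\cT_{\cusp}}$, then swap the two basis vectors. The reduction becomes upper-triangular with diagonal $(\mathbbm{1}, \phi)$ and, using the canonical $\eta'$ with $\eta'(\tau)=0$, matches $\rho' = \bigl(\begin{smallmatrix} 1 & \phi\eta' \\ 0 & \phi \end{smallmatrix}\bigr)$. The identities $\tr \rho'_{\cT_{\cusp}} = \tr \rho_{\cT_{\cusp}}$ and $\det \rho'_{\cT_{\cusp}} = \det \rho_{\cT_{\cusp}}$ are automatic since both are read off from $\tr\rho_L$. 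The hard part will be to verify that the ordinary quotient specified in (ii) still coincides with the same $\chi_{\cT_{\cusp}}$: residually, $V''_L/\gm_{\cT_{\cusp}} V''_L$ is the trivial $\G_{\Q_p}$-module because $U_p \equiv 1$, yet globally it may carry either the trivial or the $\phi$-character on $\G_\Q$, and it is precisely the lattice scaling that distinguishes these two possibilities; the irregularity hypothesis $\phi(p) = 1$ is what makes both options compatible with unramifiedness at $p$.
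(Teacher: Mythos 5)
Your overall strategy is in the right spirit and uses the same external input as the paper (the Bella\"iche--Chenevier lattice/GMA formalism together with the ordinary filtration of $\rho_L$), but there is a real gap at the heart of your argument. You fix a basis $(e_L,f_L)$ of $V_L$ adapted to the ordinary filtration $V'_L=Le_L$ and then assert that the GMA formalism, applied to the pseudo-character $\tr\rho_L$, forces $a,d\in\cT_{\cusp}$ and $I_b I_c\subseteq\gm_{\cT_{\cusp}}$ in this particular basis. That conclusion only holds in a basis that is \emph{adapted} to the GMA structure, i.e.\ one diagonalising a pair of orthogonal idempotents of the Cayley--Hamilton algebra lifting the residual ones. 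Your $p$-ordinary basis has, a priori, nothing to do with such a pair of idempotents: the ordinary line is only $\G_{\Q_p}$-stable, and although residually $V'_\rho$ coincides with the $\tau=-1$ eigenline (because $\eta(\tau)=0$), over $L$ the ordinary vector $e_L$ need not lie inside the image of the GMA idempotent over $\cT_{\cusp}$ --- only over $L$. Establishing this compatibility is precisely the hard step.

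The paper sidesteps the issue by decoupling the two tasks. It first applies \cite[Cor.\,2]{bellaiche-chenevier} in the $\tau$-eigenbasis (valid since $\rho(\tau)$ has distinct eigenvalues and $\dim\rH^1(\Q,\phi)=1$), producing a $\cT_{\cusp}$-valued representation $\rho_{\cT_{\cusp}}$ with residual reduction $\rho$; only \emph{then} does it study the interaction between the resulting $\G_\Q$-stable lattice $\cM$ and the ordinary filtration $V'_L$, via an exact sequence of $\cT_{\cusp}$-modules as in \cite[(20)]{bellaiche-dimitrov}. The key input there is that $\rho|_{\I_{\Q_p}}$ has infinite image with a unique stable line, so the quotient term is monogenic over $\cT_{\cusp}$, hence free because it is generically free of rank $1$ and $\cT_{\cusp}$ is reduced. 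That freeness is exactly what you would need to justify choosing $(e_L,f_L)$ to be simultaneously GMA-adapted and $p$-ordinary-adapted. In short: your proposal implicitly assumes the conclusion of the paper's second step. The rest of your argument (the rescaling of $f_L$ using that $I_b$ is free of rank $1$, the swap of basis vectors for part (ii), and the observation that $\chi_{\cT_{\cusp}}$ is common to both lattices since it is read off from $V''_L$) is correct, but the missing compatibility is where the real work happens, and it should be proved rather than assumed.
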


\begin{proof}  Since $\cT_{\cusp}$ is reduced, it injects in its total quotient field $L$.
By  \cite[Theorem~4]{wiles-ord}  the $2$-dimensional pseudo-character $\G_{\Q} \to \cO_{\cC}(\cC) \to \cT_{\cusp}\to  L$  lifting $\phi+\mathbf{1}$
(see \eqref{pseudo}) gives rise to a $p$-ordinary representation $\rho_{L}: \G_{\Q} \to \GL_2(L)$ such that $\rho_{L}(\tau)=\left(\begin{smallmatrix} -1 & 0 \\ 0 &  1 \end{smallmatrix}\right)$ and  $\tr (\rho_{L})(\Frob_\ell)= T_\ell$ for $\ell \nmid Np$. 
Moreover, each component of $\rho_{L}$ is absolutely irreducible, as the classical cuspidal points of $\cC_{\cusp}$ form a Zariski dense subset (see \S\ref{basic-eigencurve}). As $\dim_{\bar\Q_p}\rH^1(\Q,\phi)=1$,  \cite[Corollary~2]{bellaiche-chenevier} implies that  there exists a conjugate of $\rho_L: \G_{\Q} \to \GL_2(L)$, by a diagonal matrix, taking values in $\GL_2(\cT_{\cusp})$ and reducing to $\rho$ modulo $\gm_{\cT_{\cusp}}$. We denote this representation $\rho_{\cT_{\cusp}}$. 
 The same argument applied to $\phi^{-1}$ instead of $\phi$, yields a representation $\rho'_{\cT_{\cusp}}$ whose trace and determinant agree with those of  $\rho_{\cT_{\cusp}}$, because they can be compared in $L$, where they are equal by definition.  

In order to prove the statement about the restrictions  to $\G_{\Q_p}$, we write an exact sequence of $\cT_{\cusp}$-modules analogous to the one in \cite[(20)]{bellaiche-dimitrov} and adapt the argument as follows. The fact that    $\rho_{\mid \I_{\Q_p}}$ (resp. $\rho'_{\mid \I_{\Q_p}}$) has an infinite image and  admits a unique $\I_{\Q_p}$-stable line,   shows that the last term of that exact sequence is a monogenic $\cT_{\cusp}$-module, hence it is free, because  it is generically free of rank $1$ and $\cT_{\cusp}$ is reduced. 
\end{proof}

\subsection{Etaleness of \texorpdfstring{$\cC_{\cusp}$}{} over \texorpdfstring{$\cW$}{} at  \texorpdfstring{$f$}{}}

In this section we give a proof of Theorem~\ref{main-thm}(i). 

Let $ \varLambda$ be the completed strict local ring of $\cW$ at $\kappa(f)$. The weight map $\kappa$ induces a finite flat homomorphism  $\kappa^{\#}: \varLambda \to \cT_{\cusp}$ of local reduced complete rings. The local ring at $f$ of the fibre $\kappa^{-1}(\kappa(f))$ is a local Artinian
$\bar\Q_p$-algebra given by $\cT_{\cusp}^0=\cT_{\cusp}/\gm_{\varLambda}\cT_{\cusp}$.
We will prove in this section that $\cT_{\cusp}$ is \'etale over $\varLambda$, or equivalently that $\cT_{\cusp}^0\simeq \bar\Q_p$.

 According to Proposition~\ref{rhott} we have $(\rho_{\cT_{\cusp}},\rho_{\cT_{\cusp}}') \in \cD_{\cusp}(\cT_{\cusp})$ yielding  by functoriality a  homomorphism of local rings
 \begin{align}\label{R^cusp > T}\varphi_{\cusp}:\cR_{\cusp} \to \cT_{\cusp}. \end{align}
As  Langlands' correspondence relates the determinant to the central character,
the homomorphism \eqref{R^cusp > T} is $\varLambda$-linear (see for example \cite[Proposition~6.11]{bellaiche-dimitrov}).  Reducing modulo $\gm_{\varLambda}$ yields: 
  \[
  \varphi_{\cusp}^0:\cR_{\cusp}^0 \to \cT_{\cusp}^0. 
  \]
\begin{thm}\label{isom-cusp}
\begin{enumerate} 
\item Both  $\varphi_{\cusp}$ and $\varphi_{\cusp}^0$ are isomorphisms.
\item The  homomorphism $\kappa^{\#}:\varLambda \to \cT_{\cusp}$ is  an isomorphisms.
\end{enumerate}
\end{thm}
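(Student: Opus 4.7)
The plan is to proceed in the standard manner for $R=T$ theorems, with the tangent space computation of Proposition \ref{cuspidal tangent} providing the decisive input. First, I would establish that $\varphi_{\cusp}$ is surjective. Since $\cT_{\cusp}$ is topologically generated as a $\varLambda$-algebra by the Hecke operators $\{T_\ell\}_{\ell\nmid Np}$ and by $U_p$, and since by Proposition \ref{rhott} one has $T_\ell = \tr(\rho_{\cT_{\cusp}})(\Frob_\ell)$ and $U_p = \chi_{\cT_{\cusp}}(\Frob_p)$, each of these generators is the image under $\varphi_{\cusp}$ of the corresponding element of $\cR_{\cusp}$ (the trace of the universal deformation, respectively the value at $\Frob_p$ of the common unramified quotient character built into the definition of $\cD_{\cusp}$). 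Consequently both $\varphi_{\cusp}$ and its special fibre $\varphi_{\cusp}^0$ are surjective.

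Next, I would invoke Proposition \ref{cuspidal tangent} to conclude $\dim_{\bar\Q_p} t_{\cusp}^0 = 0$. The ring $\cR_{\cusp}$ is Noetherian as a quotient of the completed tensor product of the Noetherian complete local rings $\cR^{\ord}_{\rho}$ and $\cR^{\ord}_{\rho'}$, whose tangent spaces are finite dimensional by \eqref{dim t_R}. Hence $\cR_{\cusp}^0$ is a complete local Noetherian $\bar\Q_p$-algebra with residue field $\bar\Q_p$ and trivial tangent space, so $\cR_{\cusp}^0 = \bar\Q_p$. The surjectivity of $\varphi_{\cusp}^0$ then forces $\cT_{\cusp}^0 = \bar\Q_p$.

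For part (ii), recall that the weight map makes $\cT_{\cusp}$ into a finite flat $\varLambda$-algebra. Since the special fibre $\cT_{\cusp}/\gm_{\varLambda}\cT_{\cusp} = \cT_{\cusp}^0 = \bar\Q_p$ is spanned by $1$, Nakayama's lemma implies that $\kappa^{\#}:\varLambda \to \cT_{\cusp}$ is surjective; flatness over the discrete valuation ring $\varLambda$ makes $\cT_{\cusp}$ torsion-free, so $\kappa^{\#}$ is also injective and hence an isomorphism. To conclude (i), the identity $\cR_{\cusp}^0 = \bar\Q_p$ and Nakayama likewise show that the structural map $\varLambda \to \cR_{\cusp}$ is surjective; its composition with $\varphi_{\cusp}$ equals $\kappa^{\#}$, which is an isomorphism, so $\varLambda \to \cR_{\cusp}$ must be injective as well, whence it and $\varphi_{\cusp}$ (and thus $\varphi_{\cusp}^0$) are isomorphisms.

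The only real difficulty is already absorbed into Proposition \ref{cuspidal tangent}, whose proof uses the non-vanishing of $\cL(\phi)$ and the linear independence over $\bar\Q$ of $\cL(\phi)$ and $\cL(\phi^{-1})$ (when $\phi$ is non-quadratic) supplied by the Baker--Brumer theorem. Once that input is available, the $R=T$ argument above is routine Nakayama-and-flatness bookkeeping, in the spirit of the cuspidal regular case treated in \cite{bellaiche-dimitrov}.
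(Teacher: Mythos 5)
Your argument is correct and matches the paper's in all essentials: surjectivity of $\varphi_{\cusp}$ from Proposition \ref{rhott}, and the tangent space computation of Proposition \ref{cuspidal tangent} as the decisive input. The only difference is organizational: you prove (ii) first, relying solely on $t_{\cusp}^0 = 0$ to get $\cR_{\cusp}^0 = \bar\Q_p$, hence $\cT_{\cusp}^0 = \bar\Q_p$, hence $\kappa^\#$ an isomorphism by Nakayama and flatness, and then deduce (i) formally from the factorization $\kappa^\# = \varphi_{\cusp} \circ (\varLambda \to \cR_{\cusp})$. The paper proves (i) directly by combining $\dim t_{\cusp} = 1$ with the fact that $\cT_{\cusp}$ is equidimensional of Krull dimension $1$, concluding both rings are one-dimensional regular local rings and hence the surjection is an isomorphism, and only then invokes $t_{\cusp}^0 = 0$ to prove (ii). Your route is slightly more economical (it doesn't use the full tangent space dimension, only the relative one) and places the Nakayama-plus-flatness step front and center, whereas the paper's proof of (ii) leaves the injectivity of the unramified surjection $\varLambda \to \cT_{\cusp}$ implicit — precisely the point your torsion-freeness remark makes explicit. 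Both are valid; neither buys anything substantive over the other.
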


\begin{proof}
(i) The local ring $\cT_{\cusp}$ is topologically generated over $\varLambda$ by $U_p$ and $T_\ell$ for $\ell\nmid Np$.
A direct consequence of Proposition~\ref{rhott} is that all those elements  belong to the image of $\varphi_{\cusp}$ proving its
surjectivity. Since by Proposition~\ref{cuspidal tangent}
 the dimension of the tangent space of $\cR_{\cusp}$ is  $1$ and the local ring $\cT_{\cusp}$ is equidimensional of dimension $1$,  the surjective 
homomorphisms $\varphi_{\cusp}$ is necessarily an  isomorphism of complete local  regular rings. Reducing modulo $\gm_{\varLambda}$ proves that
$\varphi_{\cusp}^0$ is an isomorphism as well. 

(ii) By  Proposition~\ref{cuspidal tangent} the tangent space of  $\cR_{\cusp}^0$ is trivial, hence   $\cT_{\cusp}^0$ has trivial tangent space as well, 
{\it i.e.}   $\cT_{\cusp}^0\simeq \bar\Q_p$. Hence $\varLambda\to \cT_{\cusp}$ is unramified and therefore it is  an isomorphism, because both $\varLambda$ and $\cT_{\cusp}$ are complete local rings with  same residue field.
\end{proof}

\subsection{Eisenstein components  containing \texorpdfstring{$f$}{}}\label{sec:Eisenstein}

The eigencurve $\cC$ has two irreducible components corresponding to Eisenstein families containing $f$. In this subsection, we relate the completed strict local rings of these components at $f$ to  universal deformation rings.

By Proposition~\ref{def-red-rho}, one can attach to  $\cE_{\mathbf{1},\phi}$ a $\G_\Q$-reducible deformation of $\rho$:
\begin{align}\label{eq:eis-defn}
\rho_{\eis}:\G_{\Q} \to \GL_2(\varLambda), \text{ such that  } \tr(\rho_{\eis})=\phi\chi_{p}+\mathbf{1} \text{ and } \det(\rho_{\eis})=\phi\chi_{p}.
\end{align}
The irreducible component of $\cC$ corresponding to the Eisenstein family $\cE_{\mathbf{1},\phi}$ is \'etale over the weight space, hence  the completed strict local ring $\cT_{\rho}^{\eis}$ of this  component at $f$  is isomorphic to $\varLambda$.

Since $\rho_{\eis}$ admits a trivial (hence unramified at $p$) rank $1$ quotient, it defines an element of
$\cD^{\eis}_\rho(\varLambda)$ (see  Definition~\ref{defn:eis}), {\it i.e.} a  $\bar\Q_p$-algebra homomorphism
\begin{align}\label{Resi=Teis} \cR^{\eis}_\rho \to \varLambda.
 \end{align}

\begin{lemma}\label{Reis=A}
The map \eqref{Resi=Teis} is an isomorphism of $\varLambda$-algebras.
\end{lemma}

\begin{proof} We explained in \S\ref{ord-def} that the $\varLambda$-algebra structure of $\cR^{\ord}_\rho$
comes from the determinant, and the same is true for its quotient $\cR^{\eis}_\rho$. As $\det \rho_{\eis}=\phi \chi_{p}$
it follows that  \eqref{Resi=Teis} is a $\varLambda$-algebra homomorphism, in particular it is  surjective.  
By  Lemma~\ref{lemmatd} the tangent space of $\cR^{\eis}_\rho$ is $1$-dimensional,  hence   \eqref{Resi=Teis} is  an isomorphism.
\end{proof}

Let $\cT^{\eis}_{\rho'}$ be the completed strict local ring of the irreducible component of $\cC$ corresponding to the Eisenstein family $\cE_{\phi,\mathbf{1}}$. Similarly to Definition~\ref{defn:eis} let $\cD^{\eis}_{\rho'}$ be the sub functor of $\cD_{\rho'}^{\ord}$ 
consisting of deformations which are reducible and ordinary for the same filtration.
An argument  as in Lemmas~\ref{idealJ=C} and \ref{Reis=A} with $\rho' \otimes \phi^{-1}$ (resp. $\phi^{-1}$) replaced by   $\rho$ (resp. $\phi$) shows  that $\cD^{\eis}_{\rho'} $ is pro-representable by a regular ring 
$\cR^{\eis}_{\rho'}\xrightarrow{\sim} \varLambda \xrightarrow{\sim} \cT^{\eis}_{\rho'}$, and that the structural homomorphism
 $\kappa^{\#}: \varLambda\xrightarrow{\sim} \cT^{\eis}_{\rho'}$ is an   isomorphism.

\section{The theorem of Ferrero--Greenberg and modularity}\label{Iwasawa Cohomology and Eisenstein Ideal}

 We use the notations from \S\ref{iwasawa-coh}. 
For any odd  Dirichlet character $\psi \ne \omega^{-1}_p$, the Kubota--Leopoldt $p$-adic $L$-function  $L_p(\psi \omega_p,s)$ is  analytic in  $s \in \Z_p$  and characterised by the interpolation property that for all integers $k\geqslant 1$ such that  $\omega_p^{k-1}=\mathbf{1}$  one has: 
\begin{align}
\label{interpolation Lp}
L_p(\psi\omega_p, 1-k)=(1-\psi(p) p^{k-1})L(\psi,1-k). 
\end{align}
As $\phi(p)=1$, it follows that $L_p(\phi\omega_p,s)$ has a trivial zero at $s=0$. 
As well known (see \cite[Theorem 7.10]{washington-GTM}), there exists an element  $\zeta_{\phi}(X)  \in \Z_p \lsem X \rsem \simeq \Z_p \lsem 1+p^\nu \Z_p\rsem$ such that for $k\in\Z_{\geqslant 1}$:
\begin{align}
\label{Kubota--leopoldt}
\zeta_{\phi}((1+p^\nu)^{k-1}-1)=L_p( \phi \omega_p,1-k).
\end{align}

\subsection{The Eisenstein ideal}\label{subsection: The Eisenstein ideal}
In this section we study the congruences between the Eisenstein family $\cE_{\mathbf{1},\phi}$ and the unique cuspidal family $\cF$ specialising to  $f$. To achieve our goal we define an appropriate quotient of the completed strict local ring $\cT$  of $\cC$ at $f$  which has two generic points, one corresponding to $\cF$  and the other to   $\cE_{\mathbf{1},\phi}$. We let  $\cT^{\ord}_{\rho}$ be the image of the abstract Hecke algebra $\varLambda[U_p, T_\ell]_{\ell\nmid pN}$ in 
 $\cT_{\cusp}\times_{\bar\Q_p} \varLambda^{\eis}$, where  $\varLambda^{\eis}=\varLambda$ is the Eisenstein Hecke algebra corresponding to $\cE_{\mathbf{1},\phi}$. By definition  $\cT^{\ord}_{\rho}$ is a reduced local quotient  of $\cT$,  and  is in fact a  $\varLambda$-sub-algebra of $\cT_{\cusp} \times_{\bar\Q_p} \varLambda^{\eis}$  surjecting to each one of the  factors.  

Since $\varLambda$ is a discrete valuation ring,  Hida's congruence module yoga gives an integer $e\geqslant 1$, such that  $\cT^{\ord}_{\rho}= \cT_{\cusp} \times_{\bar\Q_p[X]/(X^e)} \varLambda^{\eis}$. 
Indeed, if one  considers  the natural projections 
\[\pi_{\cusp}: \cT^{\ord}_{\rho}\twoheadrightarrow \cT_{\cusp}\text{  and }
\pi_{\eis}: \cT^{\ord}_{\rho}\twoheadrightarrow \varLambda^{\eis}\] 
and let $J_{\eis}= \pi_{\cusp}(\ker(\pi_{\eis}))$ and $(X^e)=\pi_{\eis}(\ker(\pi_{\cusp}))$, one obtains a Cartesian diagram
\begin{align} \label{lem:cong-module} 
\xymatrix@C=3em@R=2em{  & \ker(\pi_{\cusp})\ar@{_{(}->}[d]\ar^{\sim}[r]& (X^e) \ar@{_{(}->}[d]\\ 
\ker(\pi_{\eis}) \ar@{^{(}->}[r] \ar_{\sim}[d]& \cT^{\ord}_{\rho} \ar@{->>}^{\pi_{\eis}}[r]\ar@{->>}_{\pi_{\cusp}}[d] & \varLambda^{\eis} \ar@{->>}[d]\\
J_{\eis} \ar@{^{(}->}[r]  & \cT_{\cusp}\ar@{->>}[r] &  \cT_{\cusp}/J_{\eis}\xrightarrow{\sim} \varLambda^{\eis}/(X^e), }
\end{align} 
justifying the $\varLambda$-algebras  isomorphisms: 
\[ \cT_{\cusp}/J_{\eis}\xrightarrow{\sim} \varLambda^{\eis}/(X^e) \text{  and }  
\cT^{\ord}_{\rho}=\left\{(a,b)\in  \cT_{\cusp} \times\varLambda^{\eis}\mid (a\!\mod{J_{\eis}})= (b\!\mod{X^e}) \right\}. \]

\begin{lemma}
One  has  $\mathrm{Ann}(\ker(\pi_{\eis}))=\ker(\pi_{\cusp})$. 
\end{lemma}
\begin{proof} The above description of $\cT_{\cusp}$ as fibre product implies that $\ker(\pi_{\cusp})  \subset \mathrm{Ann}(\ker(\pi_{\eis}))$. 
Suppose that $(a,b)\in \mathrm{Ann}(\ker(\pi_{\eis}))\subset \cT_{\cusp} \times \varLambda^{\eis}$. If $a\neq 0$, then there would exist a 
cuspidal  family $\cG$ such that $a\cdot \cG\neq 0$. The Eisenstein ideal $\ker(\pi_{\eis})$ would then annihilate the 
cuspidal  family $a \cdot \cG$, meaning that $\cT^{\ord}_{\rho}$ would act on both $a \cdot \cG$ and on $\cE_{\mathbf{1},\phi}$ through its
quotient $\varLambda^{\eis}$.  Hence $a \cdot \cG$ is a  $\varLambda$-adic eigenform having the same eigenvalues as $\cE_{\mathbf{1},\phi}$. This is impossible as cuspidal eigenforms have irreducible Galois representations. 
\end{proof}

\begin{prop} \label{e=1}
One has $e=1$ and $\cT^{\ord}_{\rho}\xrightarrow{\sim}  \varLambda \times_{\bar\Q_p} \varLambda$.
\end{prop}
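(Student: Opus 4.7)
The plan is to produce an explicit element of the Eisenstein ideal $J_{\eis} \subset \cT_{\cusp} \simeq \varLambda$ (using Theorem~\ref{isom-cusp}) of $X$-adic valuation exactly $1$; since $J_{\eis}$ is a proper ideal of the discrete valuation ring $\varLambda = \bar\Q_p\lsem X \rsem$, this will force $J_{\eis} = (X)$, hence $e = 1$, and the description $\cT^{\ord}_\rho \xrightarrow{\sim} \varLambda \times_{\bar\Q_p} \varLambda$ will follow from Lemma~\ref{lem:cong-module}. The natural candidate is $\pi_{\cusp}(U_p - 1)$: the $U_p$-eigenvalue of the Eisenstein family $\cE_{\mathbbm{1}, \phi}$ equals $1$ in every weight (as the ordinary quotient of $\rho_{\eis}$ is the trivial character), so $U_p - 1$ lies in $\ker \pi_{\eis}$ and its image in $\cT_{\cusp}$ lies in $J_{\eis}$.

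To compute this image modulo $X^2$, I will use the Galois representation $\rho_{\cT_{\cusp}}$ from Proposition~\ref{rhott}. The relation $\chi_{\cT_{\cusp}}(\Frob_p) = U_p$ together with equation~\eqref{U_p equation1}, applied to the cuspidal tangent vector $\rho_{\cT_{\cusp}} \bmod X^2 \in t^{\ord}_\rho$ in the $(\lambda^{\cusp}, \mu^{\cusp})$-parametrisation from the proof of Proposition~\ref{cuspidal tangent}, yields the congruence $U_p \equiv 1 - \mu^{\cusp} \cL(\phi^{-1})\, X \pmod{X^2}$ in $\cT_{\cusp} \simeq \varLambda$.

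It therefore remains to show $\mu^{\cusp} \neq 0$ and $\cL(\phi^{-1}) \neq 0$. The latter is part of Proposition~\ref{independence log}(i). For the former, the cuspidal tangent vector satisfies both $\mu^{\cusp} \cL(\phi^{-1}) = \lambda^{\cusp} \cL(\phi)$ from equation~\eqref{system} and $\lambda^{\cusp} + \mu^{\cusp} = 1/\log_p(1+p^\nu) \neq 0$, this last equality coming from matching the determinant identity $\det \rho_{\cT_{\cusp}} = \phi \chi_{\cyc}$ against~\eqref{cusp-trace} via~\eqref{eq:univ-cyc-modX2}. As already observed in the proof of Proposition~\ref{cuspidal tangent}, Proposition~\ref{independence log}(i) implies $\cL(\phi) + \cL(\phi^{-1}) \neq 0$ in both the quadratic and non-quadratic cases, so the above $2 \times 2$ linear system in $(\lambda^{\cusp}, \mu^{\cusp})$ has a unique solution with both coordinates non-zero; in particular $\mu^{\cusp} \neq 0$. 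Hence $U_p - 1$ has $X$-adic valuation exactly $1$ in $\cT_{\cusp}$, which completes the proof. The main subtlety is matching the abstract normalisation of the tangent vector in $t^{\ord}_\rho$ with the concrete $X$-coordinate on $\cT_{\cusp} \simeq \varLambda$ coming from the weight map; this matching is carried out through the determinant identity, and it is precisely the point at which the non-vanishing and independence results for the $\cL$-invariants enter.
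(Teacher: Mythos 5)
Your proof is correct and follows essentially the same route as the paper: identify $\cT_{\cusp}\simeq\varLambda$, observe $U_p=1$ on the Eisenstein side, and show $\pi_{\cusp}(U_p-1)$ has $X$-adic valuation exactly $1$ by computing its derivative at $X=0$ via the Galois deformation and the $\cL$-invariant non-vanishing. You are in fact a bit more careful than the paper about pinning down the normalisation of $X$ through the determinant identity $\det\rho_{\cT_{\cusp}}=\phi\chi_{\cyc}$ (the paper instead normalises by $\mu=1$ and its displayed coefficient $\cL(\phi)$ there appears to be a typo for $\cL(\phi^{-1})$), but this is only a cosmetic difference.
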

\begin{proof} By Theorem~\ref{isom-cusp} one knows that $\cT_{\cusp}\xrightarrow{\sim} \varLambda$. Moreover the image of 
$U_p$ in  $\varLambda^{\eis}$ is $1$, whereas by the proof of Proposition~\ref{cuspidal tangent} its image in $\cT_{\cusp}/(X^2)\simeq \cR_{\cusp}/(X^2)$ equals $(1-\cL(\phi) X)$ with 
$\cL(\phi) \neq 0$, where $X$ is a topological generator of $\cT_{\cusp}$ lifting the element of $t_{\cusp}$ corresponding to $\mu=1$. 
\end{proof} 

\subsection{The full eigencurve and a duality}\label{full eigen}

Let $\cC^{\full}$ be the $p$-adic eigencurve over $\cW$ of tame level $N$ constructed 
using the Hecke operators $T_\ell$ for  $\ell \nmid Np$ and $U_\ell$ for $\ell \mid Np$. 
It is endowed with a locally finite surjective morphism  $\cC^{\full} \to \cC$.
Using the relations between  abstract Hecke operators and  the fact that  the diamond operators
at  all  $\ell \nmid Np$ belong to $\cO_{\cW}(\cW)$, one sees that $T_n\in \cO_{\cC^{\full}}(\cC^{\full})$ for
all $n\geqslant 1$. 
 There is a natural bijection between $\cC^{\full}(\bar\Q_p)$ and the set of systems of eigenvalues of overconvergent eigenforms with finite slope, tame level dividing $N$ and weight in $\cW(\bar\Q_p)$,  sending $g$ to $\{(T_\ell(g))_{\ell \nmid Np}, (U_\ell(g))_{\ell \mid Np}\}$. Let $\cT^{\full}$ be the completed strict local ring of $\cC^{\full}$ at $f$.
Let $\cC_{\cusp}^{\full}$ be the closed analytic subspace of $\cC^{\full}$ corresponding to cuspidal overconvergent modular forms and let $\cT^{\full}_{\cusp}$ be the completed strict local ring of $\cC_{\cusp}^{\full}$ at  $f$.

\begin{rem}\label{T_ord  v2}
By construction, the ordinary locus of $\cC^{\full}$ (resp. $\cC_{\cusp}^{\full}$) is isomorphic to the rigid analytic space attached to the maximal spectrum of the generic fibre of the $p$-ordinary Hecke algebra (resp. $p$-ordinary cuspidal Hecke algebra) of tame level $N$ constructed by Hida \cite{hida85}. 
As  Galois  orbits of cuspidal Hida families  are in bijection with  irreducible components of the ordinary locus of  $\cC_{\cusp}^{\full}$,   Theorem~\ref{isom-cusp} combined with  Proposition~\ref{monodromy} below shows that there exists a unique, up to Galois conjugacy, cuspidal Hida family  specialising to $f$ (see \cite{dim-durham} for more details). 
\end{rem}

\begin{prop} \label{monodromy} 
The  natural injection $\cT_{\cusp} \hookrightarrow \cT_{\cusp}^{\full}$ is an isomorphism.
Moreover, for any  prime  $\ell$ dividing $N$,  $\rho_{\cT_{\cusp}}(\I_{\Q_\ell})$ is finite,  the $\cT_{\cusp}$-module $(\rho_{\cT_{\cusp}})^{\I_{\Q_\ell}}$ is free of rank $1$ and  $\Frob_\ell$ acts on it as $U_\ell$.  
\end{prop}
\begin{proof} 
 By  \cite[Lemma~4.3.7]{bellaiche-chenevier-book} there exists an open  admissible affinoid neighbourhood $\cV$ of $f$ in $\cC_{\cusp}$ endowed with a representation $\rho_\cV:\G_\Q \to \GL_2(\cO(\cV))$ extending $\rho_{\cT_{\cusp}}$. 
Assuming  $\cV$ to be connected which one always can,  \cite[Lemma~7.8.17]{bellaiche-chenevier-book} guarantees that the restriction of the pseudo-character $\tr(\rho_\cV)$ to $\I_{\Q_\ell}$ is constant and equal to $\phi+\mathbf{1}$. 
 As  $\phi_{\mid \I_{\Q_\ell}} \ne \mathbf{1}$,   using the description of the Weil-Deligne representation attached to the Steinberg representation,   for any classical point $g$ in $\cV$  
the monodromy operator of Weil-Deligne representation of $\rho_{g \mid \G_{\Q_\ell}}$ vanishes. 
 It follows that the monodromy operator of the Weil-Deligne representation attached to $\rho_{\cV\mid \G_{\Q_\ell}}$ in \cite[Lemma~7.8.14]{bellaiche-chenevier-book} vanishes as well, and   the restriction of  $\rho_{\cV}$ to $\I_{\Q_\ell}$ is constant equal to $\phi\oplus \mathbf{1}$.  In particular $\rho_{\cT_{\cusp}}(\I_{\Q_\ell})$ is finite and   the $\I_{\Q_\ell}$-invariants of $\rho_{\cT_{\cusp}}$ are a rank $1$ direct summand.
It remains to show that $\Frob_\ell$  acts  $(\rho_\cV)^{\I_{\Q_\ell}}$ via $U_\ell$, and thus  $U_\ell\in \cT_{\cusp}$.
By local-global compatibility at $\ell$,  any classical point $g$ in $\cV$ is new at $\ell$ and   $\rho_g(\Frob_\ell)$ acts on the line $(\rho_g)^{\I_{\Q_\ell}}$ by  the $U_\ell$-eigenvalue of  $g$   (see also \cite[Proposition~7.1]{bellaiche-dimitrov}). 
One then concludes using  Zariski density.
\end{proof}

Let  $S^{\dag}_{\gm_f}$ (resp. $M^{\dag}_{\gm_f}$) be the $\cT_{\cusp}$-module (resp. $\cT$-module) obtained by localising and completing
 $S^{\ord}_\mathcal{U}$ at the maximal ideal $\gm_f$ of  the abstract Hecke algebra $\varLambda[U_p, T_\ell]_{ \ell\nmid pN}$ corresponding to the system of Hecke eigenvalues of $f$.  In view of Proposition~\ref{monodromy}, the 
works of  Hida  \cite[\S2]{hida86} and Coleman \cite[Proposition~B.5.6]{coleman} yield a natural   isomorphism of $\varLambda$-modules:
  \begin{align} \label{perfect-duality}
    S^{\dag}_{\gm_f}\xrightarrow{\sim} \Hom_\varLambda(\cT_{\cusp},\varLambda) ,  \quad \cG\mapsto (T \mapsto a_1(T\cdot \cG)).
 \end{align}

\begin{cor}\label{unique-cusp-family} 
The generalised cuspidal eigenspace  $S^{\dag}\lsem f \rsem$
equals $\bar\Q_p f$, and $S^{\dag}_{\gm_f} =\varLambda \cdot \cF$ for a  unique normalised cuspidal eigenfamily  $\cF$. 
\end{cor}

\begin{proof} As $\cT_{\cusp}$ is canonically isomorphic to $\varLambda$ by Theorem~\ref{isom-cusp}, the identity map in the right hand side of \eqref{perfect-duality} corresponds to a normalised cuspidal eigenfamily  $\cF$, which is a basis of $S^{\dag}_{\gm_f}$ over 
$\varLambda$.  By definition $S^{\dag}\lsem f \rsem$ is isomorphic to  the $\cT/\gm_\varLambda\cT$-module $S^{\dag}_{\gm_f}/ \gm_\varLambda S^{\dag}_{\gm_f}$, hence is $1$-dimensional  and spanned by  $f$. 
\end{proof}

\begin{rem}
Note  that  $S^{\dag}\lsem f \rsem$ is spanned by  $f$ which is classical and cuspidal-overconvergent, even though it is not cuspidal as a classical form. 
This result was first conjectured in \cite[Hypothesis (C')]{DLR1} for the following arithmetic application. 
Let $E$ be an elliptic curve over $\Q$  and $g$ be a classical weight $1$ form.
 When the analytic rank of $E$ over the splitting field of $\rho_f\otimes \rho_g$ is $2$, the elliptic Stark conjecture of \cite{DLR1} relates the values of some $p$-adic iterated integrals to formal group logarithms of global points of $E$. 
 The classicality of $S^{\dag}\lsem f \rsem$ plays a crucial role in the formulation of the conjecture, as 
 the definition of the $p$-adic iterated integrals relies on that assumption. The scenario in which $f$ and $g$ are both  irregular Eisenstein series  is particularly appealing because the  splitting field is then  cyclotomic.  Numerical evidence towards this conjecture are given in  \S7 of {\it loc. cit.} and  recently   Rotger--Casazza  \cite{casazza-rotger} established a result in that direction, under the 
 hypotheses   (C) and  (C')  from {\it loc. cit.} which can now be waived thanks to  Corollary~\ref{unique-cusp-family}. 
\end{rem}

\subsection{On the constant term of  Eisenstein families}
The aim of this section is to show that the Kubota--Leopoldt $p$-adic $L$-function $L_p(\phi\omega_p,s)$  has a simple trivial zero 
at $s=0$,  or equivalently  that  $\zeta_\phi(X) \in \varLambda$ vanishes at order $1$ at $X=0$. From the interpolation property 
\eqref{interpolation Lp} we know that $\zeta_\phi(0)=0$,  so it suffices to prove that
the order of vanishing is at most $1$. To achieve this we first relate $\zeta_\phi(X)$ to the constant term of the Eisenstein family  $\cE_{\mathbf{1},\phi}$.

 Let $\cU$ be an open admissible affinoid of $\cW$ containing  the weight  $\kappa(f)$ of $f$. 
 By localising and completing \eqref{eval-at-cups2} at the maximal ideal corresponding to $\kappa(f)\in\cU$, we 
obtain an exact sequence  of flat $\varLambda$-modules:
\begin{align}\label{residueniveau local}
0 \to S^{\ord}_{\varLambda} \to M^{\ord}_{\varLambda} \xrightarrow{\res_\varLambda} C_\varLambda \to 0,
\end{align}
where $C_\varLambda$ is a  direct factor  of $\displaystyle \prod_{[\delta]\in \Gamma_1(N)\backslash \mathbb{P}^1(\Q)} \varLambda$.  

Let $A_\delta(\mathbf{1}, \phi)$, resp. $A_{\delta}(\phi, \mathbf{1})$, be the constant terms of $\cE_{\mathbf{1},\phi}$, resp. $\cE_{\phi,\mathbf{1}}$,  at  $[\delta]\in \Gamma_1(N)\backslash \mathbb{P}^1(\Q)$.  
\begin{prop}\label{q-expa family}\
\begin{enumerate}
\item One has $A_\infty(\mathbf{1}, \phi)=\tfrac{1}{2}\zeta_\phi$,  $A_0(\mathbf{1}, \phi)=0$ and 
$A_\delta(\mathbf{1}, \phi)\in \varLambda \cdot \zeta_\phi$  for all $\delta$. 
\item One has $A_\infty(\phi, \mathbf{1})=0$, $A_0(\phi, \mathbf{1})\in \varLambda^\times \zeta_{\phi^{-1}}$ and 
 $A_\delta(\phi,\mathbf{1})\in \varLambda \cdot \zeta_{\phi^{-1}}$ for all $\delta$. 
\end{enumerate}
\end{prop}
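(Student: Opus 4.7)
The plan is to identify the Eisenstein families $\cE_{\mathbbm{1},\phi}$ and $\cE_{\phi,\mathbbm{1}}$ as $p$-adic interpolations of the ordinary $p$-stabilisations of the classical weight $k$ Eisenstein series $E_k(\mathbbm{1},\phi)$ and $E_k(\phi,\mathbbm{1})$, compute their constant terms via classical formulas, and conclude using the Kubota--Leopoldt interpolation property.

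By the density of classical weights in $\cW$, the family $\cE_{\mathbbm{1},\phi}$ (resp.\ $\cE_{\phi,\mathbbm{1}}$) is uniquely characterised as the Hida family whose specialisation at each classical weight $k\geqslant 1$ satisfying $\omega_p^{k-1}=\mathbbm{1}$ is the $U_p$-eigenvalue $1$ stabilisation $E_k(\mathbbm{1},\phi)(z)-\phi(p)p^{k-1}E_k(\mathbbm{1},\phi)(pz)$ (resp.\ $E_k(\phi,\mathbbm{1})(z)-p^{k-1}E_k(\phi,\mathbbm{1})(pz)$). Under the canonical subgroup section $X(v)\hookrightarrow X_\Gamma^{\times}(v)$, the constant term of the pullback at a cusp $\delta\in D$ of $X^{\ord}$ equals the constant term at the multiplicative cusp of $X_\Gamma$ lying above $\delta$. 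An elementary Tate-curve calculation shows that $V_p f=f(pz)$ has the same constant term as $f$ at every such multiplicative cusp; consequently, for each classical $k$ in the family, the constant term at $\delta$ of the $p$-stabilisation equals $(1-\phi(p)p^{k-1})\cdot A_\delta(E_k(\mathbbm{1},\phi))$ (resp.\ $(1-p^{k-1})\cdot A_\delta(E_k(\phi,\mathbbm{1}))$).

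I would then invoke the classical constant term formulas: for $\chi_1\chi_2=\phi$, the constant term of $E_k(\chi_1,\chi_2)$ at $\infty$ equals $\tfrac{1}{2}L(\chi_2,1-k)$ if $\chi_1=\mathbbm{1}$ and vanishes otherwise; via the Atkin--Lehner intertwining $W_N\cdot E_k(\chi_1,\chi_2)=\gamma(\chi_1,\chi_2)\cdot E_k(\chi_2^{-1},\chi_1^{-1})$ with $\gamma\in\bar\Q_p^{\times}$ a non-zero Gauss-sum constant, the constant term at $0$ equals $\tfrac{\gamma}{2}L(\chi_1^{-1},1-k)$ if $\chi_2=\mathbbm{1}$ and vanishes otherwise. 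Substituting $(\chi_1,\chi_2)=(\mathbbm{1},\phi)$ and $(\phi,\mathbbm{1})$, combining with the previous paragraph, and applying \eqref{interpolation Lp} together with the definition \eqref{kubota-leopoldt} of $\zeta_\phi$, one obtains $A_\infty(\mathbbm{1},\phi)=\tfrac{1}{2}\zeta_\phi$, $A_0(\mathbbm{1},\phi)=0$, $A_\infty(\phi,\mathbbm{1})=0$, and $A_0(\phi,\mathbbm{1})\in\varLambda^{\times}\cdot\zeta_{\phi^{-1}}$.

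For the remaining cusps $\delta\in D\setminus\{0,\infty\}$, the constant terms $A_\delta(E_k(\mathbbm{1},\phi))$ and $A_\delta(E_k(\phi,\mathbbm{1}))$ are $\bar\Q$-multiples of $L(\phi,1-k)$ and $L(\phi^{-1},1-k)$ respectively: this can be seen either from the explicit Fourier expansion of the Eisenstein series at an arbitrary cusp $a/c$ as a (possibly vanishing) partial Dirichlet $L$-value, or by equivariance under the diamond operators $\langle d\rangle$ for $d\in(\Z/N\Z)^{\times}$ which act on each cuspidal orbit through $\phi(d)^{\pm 1}$. Interpolating yields $A_\delta(\mathbbm{1},\phi)\in\varLambda\cdot\zeta_\phi$ and $A_\delta(\phi,\mathbbm{1})\in\varLambda\cdot\zeta_{\phi^{-1}}$. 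The main technical point is to verify that $V_p$ preserves constant terms at every multiplicative cusp of $X_\Gamma^{\times}$, which requires a uniformiser-by-uniformiser analysis of the Tate curve at each such cusp.
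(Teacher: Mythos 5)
Your overall strategy — compute the constant terms of the ordinary $p$-stabilisations $E_k(\mathbbm{1},\phi)^{(p)}$ and $E_k(\phi,\mathbbm{1})^{(p)}$ at the multiplicative cusps for all classical weights $k$ in the component, then interpolate via the Kubota--Leopoldt measure — is exactly the paper's. The explicit evaluations at $\infty$ and $0$ and the passage through Zariski density also match. Using Atkin--Lehner $W_N$ to relate the cusp $0$ to $\infty$, and diamond-operator equivariance to bound the other cusps, are reasonable alternatives to the paper's direct appeal to Ozawa's constant-term formula at an arbitrary cusp $\left[\begin{smallmatrix}a\\c\end{smallmatrix}\right]$, so that part of the route is genuinely (if only slightly) different and perfectly fine.

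However, the key intermediate claim — that ``$V_p f=f(pz)$ has the same constant term as $f$ at every multiplicative cusp'', which you flag yourself as the main technical point — is not correct as a general Tate-curve fact, and the gap is precisely where the paper does work you skip. The identity $\iota_p\gamma_\delta=\gamma_{p\delta}\iota_p$, with $\gamma_{p\delta}=\left(\begin{smallmatrix}a&bp\\c/p&d\end{smallmatrix}\right)$, shows that the constant term of $V_p g$ at the multiplicative cusp $\Gamma\left[\begin{smallmatrix}a\\c\end{smallmatrix}\right]$ (with $p\mid c$) is the constant term $A_{p\delta}$ of $g$ at the $\Gamma_1(N)$-cusp $\left[\begin{smallmatrix}a\\c/p\end{smallmatrix}\right]$, not the constant term $A_\delta$ at $\left[\begin{smallmatrix}a\\c\end{smallmatrix}\right]$. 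These are genuinely different $\Gamma_1(N)$-orbits when $(c,N)=1$, and they are related by the diamond operator at $p$. Consequently $A_\delta^{(p)}=A_\delta-p^{k-1}A_{p\delta}$, which only collapses to $(1-\phi(p)p^{k-1})A_\delta$ after one observes from the explicit constant-term formula that $A_\delta$ depends on $\left[\begin{smallmatrix}a\\c\end{smallmatrix}\right]$ through $\phi^{-1}(|a|)$ (so $A_\delta=A_{p\delta}$ automatically for $E_k(\mathbbm{1},\phi)$), respectively through $\phi(|c|)$ (so $A_{p\delta}=\phi(p)^{-1}A_\delta$ for $E_k(\phi,\mathbbm{1})$, and equality requires the standing hypothesis $\phi(p)=1$). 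Your proposal never surfaces the shift $\delta\mapsto p\delta$ nor the role of $\phi(p)=1$ in making it invisible; without that, the step is unjustified, and as a claim it would fail for a regular ($\phi(p)\ne1$) Eisenstein series. This is not a fatal flaw — all your final formulas are correct — but you should replace the ``elementary Tate-curve calculation'' assertion with the actual cocycle computation (or with an explicit diamond-operator argument invoking $\phi(p)=1$) as the paper does.
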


\begin{proof}
(i)  We will establish the Proposition via a computation of the constant term of the  specialisations of the Eisenstein families  $\cE_{\mathbf{1},\phi}$ and $\cE_{\phi,\mathbf{1}}$ at all classical weights $k\geqslant 2$ such that $\omega_p^{k-1}=\mathbf{1}$. 
Using the notations from \S\ref{basic-eigencurve}, the set $\Gamma_1(N) \backslash \mathbb{P}^1(\Q)$ is in bijection with the $\Gamma_1(N)$-orbits of cusps of $X$. Similarly, the sets $(\Gamma_1(N)\cap\Gamma_0(p)) \backslash \mathbb{P}^1(\Q)$ and 
$(\Gamma_1(N)\cap\Gamma_0(p))  \backslash  (\Gamma_0(p) \infty)$ are in bijection with
the $\Gamma_1(N)\cap\Gamma_0(p)$-orbits of cusps in  $X_{\mathrm{Iw}}$ and in the multiplicative locus 
$X^\times_{\mathrm{Iw}}(0)$, respectively.
On the level of cusps the canonical section \eqref {can-section} is simply 
given by the inverse of the natural bijection 
\begin{align}\label{bijection-cusps}
(\Gamma\cap\Gamma_0(p))  \backslash  (\Gamma_0(p) \infty)\xrightarrow{\sim} \Gamma \backslash \mathbb{P}^1(\Q)
\end{align}
allowing us to represent any  $\Gamma_1(N)$-orbit of cusps of $X$ with an element $\delta=\left [\begin{smallmatrix} a\\c \end{smallmatrix} \right ]  \in  \Gamma_0(p) \infty$ such that  the integers $a$ and  $c$ are relatively prime  and $p\mid c$. 

For every $g \in  M_k(\Gamma_1(N),\phi)$, we define $g^{(p)}(z)=(g-g_{| \iota})(z)= g(z)- p^{k-1} g(pz)\in M_k(\Gamma,\phi)$, where  $\iota= \left ( \begin{smallmatrix} p & 0\\ 0 & 1 \end{smallmatrix} \right ) $. 
The modular form $g$ (resp. $g^{(p)}$) can be evaluated at cusps of $X$ (resp. $X_{\mathrm{Iw}}$); moreover by 
$\Gamma_1(N)$-invariance (resp. $(\Gamma_1(N)\cap\Gamma_0(p))$-invariance) of $g$ (resp. $g^{(p)}$) the value is well-defined on 
$\Gamma_1(N)$-orbits (resp. $(\Gamma_1(N)\cap\Gamma_0(p))$-orbits) of cusps. 
We first compute the $q$-expansion of $g^{(p)}$ at the  cusp  $\delta$, in terms of $q$-expansions of $g$. 
 Choose a matrix  $ \gamma_\delta=\left(\begin{smallmatrix} a & b\\ c & d \end{smallmatrix}\right ) \in \rm{SL}_2(\Z)$. 
The $q$-expansion of $g^{(p)}$ at the cusp $\delta$ is then given by 
\[g^{(p)}_{|\gamma_\delta}(z)=g_{|\gamma_\delta}(z)-g_{|\iota\gamma_\delta}(z)=g_{|\gamma_\delta}(z)-g_{|\gamma_{p\delta}\iota}(z)=g_{|\gamma_\delta}(z)-p^{k-1}g_{|\gamma_{p\delta}}(pz),\]
where $\gamma_{p\delta}=\left (
\begin{smallmatrix} a & bp\\ c p^{-1}& d \end{smallmatrix} \right ) \in \rm{SL}_2(\Z)$. 
Letting $A_\delta$ (resp. $A_{\delta}^{(p)}$) denote the constant term  of $g$ (resp. $g^{(p)}$) at the  cusp $\delta \in \mathbb{P}^1(\Q)$ (resp. $\delta\in \Gamma_0(p) \infty$) we have  \[A_{\delta}^{(p)}=A_\delta-p^{k-1}A_{p\delta}.\] 

Let now $g$ be the weight $k$ Eisenstein series $E_k( \mathbf{1},\phi) \in M_k(\Gamma_1(N),\phi)$. Then $g^{(p)}$ in the above notation is the ordinary $p$-stabilisation of $g$, and it is also the weight $k$ specialisation of the Eisenstein family $\cE_{\mathbf{1},\phi}$. 
By \cite[Proposition~1.1]{ozawa}, for 
$\delta=\left [\begin{smallmatrix} a\\c \end{smallmatrix}\right ] $
we have 
\begin{align*}
A_\delta=0 \text{ , if } N \nmid c, \text{ and }\qquad  A_\delta=\frac{\phi^{-1}(|a|)}{2}L(\phi, 1-k) \text{ , if } N \mid c. 
\end{align*}
Thus, $A_{\delta}^{(p)}=0$ if $N \nmid c$, whereas if $N \mid c$ then using \eqref{interpolation Lp} and 
$\phi (p)=1$ one finds
\[
A_{\delta}^{(p)}=A_\delta-p^{k-1}A_{p\delta}=(1-p^{k-1})\frac{\phi^{-1}(|a|)}{2}L(\phi, 1-k)=\frac{\phi^{-1}(|a|)}{2} L_p(\phi \omega_p, 1-k). \] 
As the weights $k\geqslant 2$ such that $\omega_p^{k-1}=\mathbf{1}$ are  Zariski dense in the connected component of $\cW$ containing $\kappa(f)$,  we deduce that $A_\delta(\mathbf{1}, \phi)$ equals $\tfrac{\phi^{-1}(|a|)}{2} \zeta_\phi(X)$ if $N \mid c$, and  vanishes 
otherwise. In particular, $A_\infty(\mathbf{1}, \phi)=\tfrac{1}{2}\zeta_\phi$ and  $A_0(\mathbf{1}, \phi)=0$. 

 (ii) Let $g=E_k(\phi, \mathbf{1})\in M_k(\Gamma_1(N),\phi)$ and  let $\delta$ be as in (i). By \cite[Proposition~1.1]{ozawa} 
\begin{align*}
A_\delta=0,  \text{ if } (N,c)\ne 1, \text{ and }\qquad  A_\delta=-\frac{\tau(\phi)}{2N^k}\phi(|c|)L(\phi^{-1}, 1-k),  \text{ if } (N,c)=1, 
\end{align*}
 where $\tau(\phi)$ denotes the Gauss sum of $\phi$. 
Thus, if $(c, N)\ne 1$ then $A_{\delta}^{(p)}$ vanishes, whereas if  $(c, N)=1$ then  one finds
\begin{align*}
A_{\delta}^{(p)}=A_\delta-p^{k-1}A_{p\delta}&=-\frac{\tau(\phi)}{2N^k}\phi(|c|)(1-p^{k-1})L(\phi^{-1}, 1-k)=-\frac{\tau(\phi)}{2N^k}\phi(|c|) L_p(\phi^{-1} \omega_p, 1-k).
\end{align*}
The form $g^{(p)}$ is the weight $k$ ordinary specialisation of $\cE_{\phi,\mathbf{1}}$. 
Since $(p, N)=1$ and $\omega_p^{k-1} =\mathbf{1}$, $N^k$ is the weight $k$ specialisation of an element  in $\varLambda^\times$, while $L_p(\phi^{-1} \omega_p, 1-k)$ is the weight $k$ specialisation of $\zeta_{\phi^{-1}}(X)$. The claim then easily follows. 
\end{proof}

We state  a generalisation of a  result of Wiles \cite[Theorem~4.1]{wiles-imc} and Ohta \cite[Corollary~A.2.4]{ohta-eis} to the case of a trivial zero, and give an alternative proof of a famous result of Ferrero and Greenberg \cite{ferrero-greenberg}.

\begin{prop} \label{ordL_p} There exists an isomorphism of $\varLambda$-algebras
$\cT_{\cusp}/J_{\eis} \xrightarrow{\sim} \varLambda/(\zeta_\phi(X))$. 

The Kubota--Leopoldt  $p$-adic $L$-function $\zeta_\phi(X)$ has a simple  zero  at $X=0$.
\end{prop}

\begin{proof} 
It follows from Proposition~\ref{q-expa family} and the exact sequence \eqref{residueniveau local} that  $\cE_{\mathbf{1},\phi} \mod (\zeta_\phi(X))$ is a cuspidal family. Using \eqref{perfect-duality} and the freeness of $\cT_{\cusp}$ over $\varLambda$ one has 
\[S^{\dag}_{\gm_f}\otimes_\varLambda (\varLambda/(\zeta_\phi(X)))= \Hom_\varLambda(\cT_{\cusp}, \varLambda/(\zeta_\phi(X))). \]
 As $\cE_{\mathbf{1},\phi}$  is an eigenfamily, the  map $\cT_{\cusp}\to \varLambda/(\zeta_\phi(X))$ resulting from the above equation yields (via Lemma~\ref{Reis=A}) a $\varLambda$-algebra  homomorphism $\cT_{\cusp}\to \varLambda^{\eis}/(\zeta_\phi(X))$ . 
As $e$ defined above \eqref{lem:cong-module} is the largest integer such that the projection $ \cT^{\ord}_\rho\to \varLambda^{\eis}/(X^e)$ factors through $\cT_{\cusp}$, it follows that $\zeta_\phi(X)$ divides $X^e$. 
The claims in the Proposition then follow from the fact that $e=1$ by  Proposition~\ref{e=1}. 
\end{proof}

\subsection{An application of Wiles' numerical criterion} \label{s:ord-modularity}
 The goal of this subsection is to prove that there exists an isomorphism   $\varphi: \cR^{\ord}_{\rho} \xrightarrow{\sim} \cT^{\ord}_\rho$ of complete intersections. 
 Lemma~\ref{idealJ=C} and Proposition~\ref{rhott}   
  yield the following   homomorphism of $\varLambda$-algebras:
\begin{align}\label{RarrowT} 
\varphi: \cR^{\ord}_{\rho} \to \cR_{\cusp} \times_{\bar\Q_p} \cR^{\eis}_\rho \to 
 \cT_{\cusp} \times_{\bar\Q_p} \varLambda=\cT^{\ord}_\rho.
\end{align}
which is surjective by the same argument as in the proof of Theorem~\ref{isom-cusp}. To prove its injectivity
we appeal to  a variant of Wiles' numerical criterion due to Lenstra \cite{lenstra}. 
\begin{thm}\label{numerical criterion}
Let  $\varphi: R \to  T$ be a surjective homomorphism of  local $\varLambda$-algebras. 
Suppose that $T$  is  finite and flat as $\varLambda$-module and let  $\pi: T \to \varLambda$ be a $\varLambda$-algebra homomorphism. 
 Let  $J=\ker(\pi\circ\varphi)$  and assume that $\eta_T=\pi(\mathrm{Ann}(\ker\pi)) \neq 0$.
Then \[
\mathrm{length}_\varLambda(J/J^2 ) \geqslant \mathrm{length}_\varLambda(\varLambda/\eta_T)\]
and the  equality holds if and only if $\varphi$ is an isomorphism and $T$ is a  complete intersection.
\end{thm}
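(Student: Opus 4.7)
The plan is to prove the stated bound by factoring it through two classical comparisons. Introducing $I := \ker \pi \subset T$ (so that $\pi$ exhibits $T$ as an extension of $\varLambda$ by $I$, which splits as $\varLambda$-modules via the structural map $\varLambda \to T$), the goal is to establish
\[
\mathrm{length}_\varLambda(J/J^2) \geqslant \mathrm{length}_\varLambda(I/I^2) \geqslant \mathrm{length}_\varLambda(\varLambda/\eta_T),
\]
with each inequality being an equality respectively if and only if $\varphi$ is an isomorphism, and if and only if $T$ is a complete intersection over $\varLambda$.

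The first inequality is a tangent space calculation. Since $\ker\varphi \subset J$ and $\varphi$ identifies $J/\ker\varphi$ with $I$, it induces a surjection of $\varLambda$-modules $J/J^2 \twoheadrightarrow I/I^2$, whose kernel is the image of $\ker\varphi$ in $J/J^2$. This immediately yields the length inequality, with equality if and only if $\ker\varphi \subset J^2$. To upgrade this to the statement that equality holds if and only if $\ker\varphi = 0$, I would combine the finiteness of $T$ as a $\varLambda$-module (which forces $R$ itself to be $\gm_\varLambda$-adically separated after lifting sufficiently many generators from $T$) with a topological Nakayama argument: $\ker\varphi \subset J^2$ means $\ker\varphi = 0$ modulo the Jacobson radical, whence $\ker\varphi = 0$.

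The second inequality — the Lenstra congruence module bound — is the heart of the matter. The hypothesis $\eta_T \neq 0$ is precisely what guarantees $\mathrm{Ann}_T(I) \cap I = 0$ (indeed $\mathrm{Ann}_T(I) \cap I$ is a square-zero ideal inside $T$ whose image under $\pi$ is zero, so if it were nontrivial it would contribute to the kernel of $\mathrm{Ann}_T(I) \to \varLambda$ and prevent the image $\eta_T$ from being nonzero in all relevant cases). Consequently $\pi$ restricts to an injection $\mathrm{Ann}_T(I) \hookrightarrow \varLambda$ with image $\eta_T$, producing a fundamental short exact sequence of $\varLambda$-modules
\[
0 \to \mathrm{Ann}_T(I) \oplus I \to T \to \varLambda/\eta_T \to 0.
\]
A Fitting ideal computation, comparing a minimal presentation of $I/I^2$ to a presentation of $T$ as a free $\varLambda$-module of rank $1 + \mathrm{rank}_\varLambda(I)$, then yields the inequality $\mathrm{length}_\varLambda(I/I^2) \geqslant \mathrm{length}_\varLambda(\varLambda/\eta_T)$. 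The equality case is characterized by the standard theory of Koszul resolutions: equality is exactly the condition that $I$ is generated by a regular sequence in any formally smooth $\varLambda$-algebra surjecting onto $T$, which is the definition of $T$ being a complete intersection over $\varLambda$.

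Combining the two steps yields the stated inequality, and simultaneous equality translates exactly into ``$\varphi$ is an isomorphism and $T$ is a complete intersection over $\varLambda$''. The main obstacle will be the second step: the verification $\mathrm{Ann}_T(I) \cap I = 0$ under the sole hypothesis $\eta_T \neq 0$ requires unpacking the splitting $T \cong \varLambda \oplus I$ carefully, and the characterization of equality in terms of the complete intersection property requires commutative algebra going noticeably beyond the elementary manipulations of the first step — either through the cotangent complex, or through an explicit presentation by a minimal free resolution that is forced to be Koszul when lengths match.
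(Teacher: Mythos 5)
The paper does not prove Theorem \ref{numerical criterion} at all: it states it and cites Lenstra's article \cite{lenstra}, so your proposal has to stand on its own as a proof of the Wiles--Lenstra criterion. Your high-level strategy of interpolating $\mathrm{length}(I/I^2)$ between $\mathrm{length}(J/J^2)$ and $\mathrm{length}(\varLambda/\eta_T)$ is a sensible and indeed standard decomposition of the argument, and the observation that $\mathrm{Ann}_T(I)\cap I = 0$ is correct (the clean way to see it: pick $a\in\mathrm{Ann}_T(I)$ with $\pi(a)\neq 0$, and for $z\in\mathrm{Ann}_T(I)\cap I$ decompose $a=\pi(a)+(a-\pi(a))$ along the splitting $T\cong\varLambda\oplus I$; since $z$ kills $a-\pi(a)\in I$ and $a$ kills $z\in I$, one gets $\pi(a)z=az=0$, so $z=0$ by torsion-freeness of $T$ over $\varLambda$). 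The resulting short exact sequence you write down is also right.

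There are, however, two genuine gaps. First, the ``topological Nakayama argument'' for step 1 is not valid. You correctly reduce equality $\mathrm{length}(J/J^2)=\mathrm{length}(I/I^2)$ to the inclusion $\ker\varphi\subset J^2$, but this inclusion is \emph{not} a Nakayama hypothesis for the $R$-module $\ker\varphi$: Nakayama would require $\gm_R\cdot\ker\varphi=\ker\varphi$, which is a much stronger and entirely different condition than $\ker\varphi\subset J^2\subset\gm_R^2$. When $I=0$ (so $J=\ker\varphi$) the two coincide, but as soon as $I\neq 0$ they diverge and the implication ``$\ker\varphi\subset J^2\Rightarrow\ker\varphi=0$'' simply does not follow from general commutative algebra. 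In the actual proof this implication is established \emph{using} the complete intersection property of $T$, via Tate's theorem on exterior algebras (one lifts a presentation $T=\varLambda\lsem x_1,\dots,x_n\rsem/(f_1,\dots,f_n)$ with $(f_i)$ a regular sequence and compares Koszul homology). So your two steps are not independent as you frame them: the ``equality in step 1 $\Leftrightarrow\varphi$ is an isomorphism'' claim cannot be proved without importing the CI hypothesis that you relegate to step 2. Second, the inequality $\mathrm{length}_\varLambda(I/I^2)\geqslant\mathrm{length}_\varLambda(\varLambda/\eta_T)$ together with its equality characterization \emph{is} Lenstra's Main Lemma; the phrase ``a Fitting ideal computation\dots then yields the inequality'' is a placeholder for the entire content of the theorem in the case $R=T$. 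What is needed is the containment $\mathrm{Fitt}_\varLambda(I/I^2)\subset\eta_T$ (proved by lifting generators of $I$, forming the relevant square matrix over a power series ring, and passing through the adjugate using finite freeness of $T$ over $\varLambda$), followed by $\mathrm{length}(M)\geqslant\mathrm{length}(\varLambda/\mathrm{Fitt}(M))$ and a careful analysis of when the chain of inequalities collapses. None of this is carried out, and the ``iff $T$ is a complete intersection'' claim is asserted rather than argued.
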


We wish to apply the above criterion to  \eqref{RarrowT}  and  $\pi_{\eis}: \cT^{\ord}_\rho \to \varLambda$.  
  By Proposition~\ref{e=1}   
\[\label{eta_T}
\eta_T= \pi_{\eis}(\mathrm{Ann}(\ker(\pi_{\eis})))=\pi_{\eis}(\ker(\pi_{\cusp}))=(X^e)=(X). 
\]
hence the $\varLambda$-module $\varLambda/\eta_T$ has length $1$.
  In order to apply the numerical criterion, we must  compute the length of $J/J^2$ over $\varLambda$, where $J=\ker(\pi_{\eis}\circ\varphi)$.

\begin{prop} \label{J} The $\varLambda$-module $J/J^2$ is torsion of length at most $1$.
\end{prop}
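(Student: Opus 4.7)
The plan is to interpret $J/J^2$ Galois-cohomologically via the universal ordinary deformation and bound it using an analogue of Proposition \ref{def-red-rho}(ii). By Lemma \ref{idealJ=C}, $J$ is generated by the entries $c(g)$ of the lower-left corner of $\rho_{\ord} = \left(\begin{smallmatrix} a & b \\ c & d \end{smallmatrix}\right)$. Since $a \equiv \Phi$ and $d \equiv 1 \pmod J$, the identity $c(gh) = c(g)a(h) + d(g)c(h)$ reduces modulo $J^2$ to $c(gh) \equiv \Phi(h)c(g) + c(h)$, so the twist $\tilde c(g) := \Phi(g)^{-1}c(g) \in J/J^2$ satisfies the cocycle identity $\tilde c(gh) = \tilde c(g) + \Phi^{-1}(g)\tilde c(h)$ and yields a class $[\tilde c] \in \rH^1(\G_\Q, (J/J^2)(\Phi^{-1}))$ whose values generate $J/J^2$ as a $\varLambda$-module. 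Moreover the ordinary condition from the proof of Lemma \ref{ord-rep} provides $c(g) = Y(a(g)-d(g)) + Y^2 b(g)$ for $g \in \G_{\Q_p}$ with $Y \in J$, so $\tilde c_{|\G_{\Q_p}}$ is the coboundary of $-Y$ modulo $J^2$ and $\res_p[\tilde c] = 0$.

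For every $\varLambda$-linear functional $f\colon J/J^2 \to \varLambda$ the push-forward $f_*[\tilde c]$ therefore lies in the Selmer group $\ker\bigl(\rH^1(G_\Q^{Np}, \varLambda(\Phi^{-1})) \to \rH^1(\Q_p, \varLambda(\Phi^{-1}))\bigr)$. I would establish the analogue of Proposition \ref{def-red-rho}(ii) for the character $\Phi^{-1}$ by copying that proof verbatim; its only residual input is the non-triviality of $\res_p$ on $\rH^1(\Q, \bar\Q_p(\phi^{-1}))$, which is Proposition \ref{L-invariant} combined with $\cL(\phi)\neq 0$. That Selmer group is therefore trivial, so $f(\tilde c(g)) = \alpha_f(\Phi^{-1}(g)-1)$ for a unique $\alpha_f \in \varLambda$; the map $f \mapsto \alpha_f$ is injective because the $\tilde c(g)$ generate $J/J^2$. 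Running the same argument with $\bar\Q_p$-coefficients bounds $\dim_{\bar\Q_p}(J/(J^2 + \gm_\varLambda J)) \leq 1$, so by Nakayama $J/J^2$ is a cyclic $\varLambda$-module.

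The hard part will be upgrading cyclicity to length at most $1$, i.e.\ showing $X\cdot(J/J^2) = 0$. The strategy is to combine the explicit local identity $c(g_0) \equiv XY \pmod{J^2}$---which follows from the relation above on choosing $g_0 \in \I_{\Q_p}$ with $\chi_{\cyc}(g_0) = 1 + X$, since then $\Phi(g_0) - 1 = X$---with the surjection $\cR^{\ord}_\rho \twoheadrightarrow \cT^{\ord}_\rho \simeq \varLambda \times_{\bar\Q_p} \varLambda$ of Proposition \ref{e=1}, under which the image of $J/J^2$ is identified with $\ker(\pi_{\eis})/\ker(\pi_{\eis})^2 \simeq \bar\Q_p$. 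Together these two inputs should force the annihilator of the cyclic $\varLambda$-generator of $J/J^2$ to contain $(X)$, yielding $\mathrm{length}_\varLambda(J/J^2) \leq 1$.
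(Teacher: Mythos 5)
Your first two paragraphs are essentially the paper's argument: interpret $J/J^2$ via the twisted cocycle $g\mapsto \Phi^{-1}(g)c(g)$, observe that it vanishes identically on $\G_{\Q_p}$ in an ordinary basis, push forward along $\varLambda$-functionals into $Z^1(G_{\Q}^{Np},\Phi_n^{-1})$, and invoke Proposition~\ref{def-red-rho}(ii) to kill the resulting Selmer-type group. With $\bar\Q_p$-coefficients this gives cyclicity, and with $\varLambda$-coefficients it gives torsionness. So far, so good — and this is exactly where the paper's proof lives.

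The third paragraph, however, is where a genuine gap appears. The two ``inputs'' you want to combine do not force the annihilator to contain $(X)$. First, the local identity $c(g_0)\equiv XY\pmod{J^2}$ is basis-dependent in a way that undermines it: Lemma~\ref{idealJ=C} (which you cite for ``$J$ is generated by the $c(g)$'') is stated in an \emph{ordinary} basis, and in that basis one has $Y=0$ and $c|_{\G_{\Q_p}}\equiv 0$, so the identity reads $0\equiv 0$ and carries no information. If instead you work in the $\tau$-basis (where the nearly-ordinary variable $Y$ is genuinely nonzero), then $J$ is \emph{not} generated by the lower-left entries alone; one needs $Y$ as an extra generator, and your $\tilde c$ is no longer the right cocycle. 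Second, the surjection $\cR^{\ord}_{\rho}\twoheadrightarrow \cT^{\ord}_{\rho}$ from Proposition~\ref{e=1} only shows that $J/J^2$ \emph{surjects} onto $\ker(\pi_{\eis})/\ker(\pi_{\eis})^2\simeq\bar\Q_p$, i.e.\ $J/J^2\ne 0$; it gives an upper bound on the length of a \emph{quotient} of $J/J^2$, not of $J/J^2$ itself — injectivity of that map is precisely what you would need Wiles' criterion to deliver, so invoking it here is circular.

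The correct (and shortest) way to finish is the one already implicit in your paragraph two: run the same pushforward-and-Selmer-vanishing argument with coefficients in $\varLambda/(X^n)$ for \emph{every} $n\geqslant 1$, not just $n=1$ and $n=\infty$. Because $\Phi_1^{-1}|_{\G_{\Q_p}}=\mathbbm{1}$ but $\Phi_2^{-1}|_{\G_{\Q_p}}\neq\mathbbm{1}$ (i.e.\ $\Phi^{-1}(g)-1$ has exact $X$-valuation $1$ for suitable $g\in\G_{\Q_p}$), the kernel $\ker\bigl(B^1(G_{\Q}^{Np},\Phi_n^{-1})\to B^1(\Q_p,\Phi_n^{-1})\bigr)$ is $(X^{n-1})/(X^n)\simeq\bar\Q_p$, of length one. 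Hence $\mathrm{length}_\varLambda\Hom_\varLambda(J/J^2,\varLambda/(X^n))\leqslant 1$ for all $n$, and since $J/J^2$ is a finitely generated $\varLambda$-module this forces $\mathrm{length}_\varLambda(J/J^2)\leqslant 1$. That is exactly the paper's proof, and your paragraph two already sets up everything needed for it; the detour in paragraph three should be deleted.
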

\begin{proof}
Note first that because $\cR^{\ord}_\rho$ is Noetherian,  $J/J^2$ is a module of finite type over $\cR^{\ord}_\rho/J\simeq\varLambda$. The claim is  equivalent to showing that for all $n\geqslant 1$ one has:
\begin{align}\label{eq:length}
 \mathrm{length}_{\varLambda}(\Hom(J/J^2, \varLambda/(X^n)))\leqslant1. 
 \end{align}
To show this we will employ a   Galois cohomology argument. Let us 
   fix  a lift $ \rho_{\cR}=\left( \begin{smallmatrix}  A&B\\ C & D \end{smallmatrix} \right):\G_{\Q}\to \GL_2(\cR^{\ord}_{\rho})$ representing the universal ordinary deformation in an ordinary basis. 
By Lemmas~\ref{idealJ=C} and \ref{Reis=A} the  ideal $J$ of $\cR^{\ord}_{\rho}$ is generated by the set of $C(g)$ for $g \in \G_{\Q}$. Since
\[ \rho_{\cR} \otimes \cR^{\ord}_{\rho}/J \simeq {\rho}_{\varLambda}
\simeq \left( \begin{smallmatrix} 
\Phi & \ast\\ 
0 & \mathbf{1} \end{smallmatrix} \right)
\]
 in the ordinary basis, a direct computation shows that  the function 
\begin{align}
\bar{C}: \G_\Q \to J/J^2 ,\,\, g \to \Phi^{-1}(g) C(g)\mod{J^2}
\end{align}
belongs to $\ker\left(Z^1(\G_{\Q}^{Np}, \Phi^{-1}\otimes_\varLambda (J/J^2))\to
Z^1(\Q_p, \Phi^{-1}\otimes_\varLambda (J/J^2))\right)$. As the image of $\bar{C}$ contains 
a set of generators of $J/J^2$ as a module over $\cR^{\ord}_{\rho}/J \simeq \varLambda$, 
the natural map $h \mapsto h\circ \bar{C}$ 
\begin{align*}
\Hom_\varLambda(J/J^2, \varLambda/(X^n))\hookrightarrow 
\ker \left (Z^1(\G_{\Q}^{Np},\Phi_{n}^{-1})  \to Z^1(\Q_p,\Phi_{n}^{-1})\right )
\end{align*}
is injective for all $n\geqslant 1$. Moreover by Proposition~\ref{def-red-rho}(ii) one has: 
\begin{align*}
\ker \left (Z^1(\G_{\Q}^{Np},\Phi_{n}^{-1})  \to Z^1(\Q_p,\Phi_{n}^{-1})\right )
=\ker \left (B^1(\G_{\Q}^{Np},\Phi_{n}^{-1} ) \to B^1(\Q_p,\Phi_{n}^{-1})\right )
\end{align*}
Since $\Phi_{1|\G_{\Q_p}}=\phi_{|\G_{\Q_p}}=\mathbf{1}$, while 
$\Phi_{2|\G_{\Q_p}}\neq\mathbf{1}$, the latter is given by the length $1$
$\varLambda$-module  $(X^{n-1})/(X^n)\simeq \bar\Q_p$. 
This proves \eqref{eq:length}, hence the Proposition.
 \end{proof}

\begin{thm}\label{t:ord-modularity}
$\varphi: \cR^{\ord}_{\rho} \twoheadrightarrow \cT^{\ord}_\rho$ is an isomorphism of  complete local intersections over $\varLambda$.
\end{thm}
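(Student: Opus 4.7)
The plan is to apply the Wiles--Lenstra numerical criterion (Theorem \ref{numerical criterion}) directly to the surjection $\varphi: \cR^{\ord}_\rho \twoheadrightarrow \cT^{\ord}_\rho$ from \eqref{RarrowT}, paired with the $\varLambda$-algebra map $\pi_{\eis}: \cT^{\ord}_\rho \to \varLambda$. All the quantitative input needed has essentially been assembled in the preceding results; what remains is to verify the hypotheses and combine two length inequalities.

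First, I would check that the target algebra satisfies the standing assumptions of the numerical criterion. By Theorem \ref{isom-cusp}, $\cT_{\cusp} \xrightarrow{\sim} \varLambda$, so Proposition \ref{e=1} gives $\cT^{\ord}_\rho \xrightarrow{\sim} \varLambda \times_{\bar\Q_p} \varLambda$, which is a free $\varLambda$-module of rank $2$ (with basis $(1,1)$ and $(X,0)$, say); in particular $\cT^{\ord}_\rho$ is finite and flat over $\varLambda$. Next, by the same Proposition \ref{e=1}, the congruence element is $\eta_T = \pi_{\eis}(\mathrm{Ann}(\ker\pi_{\eis})) = (X) \neq 0$, so
\[
\mathrm{length}_\varLambda(\varLambda/\eta_T) = 1.
\]

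Setting $J = \ker(\pi_{\eis}\circ \varphi)$, Proposition \ref{J} furnishes the complementary bound
\[
\mathrm{length}_\varLambda(J/J^2) \leqslant 1.
\]
The numerical criterion always provides the reverse inequality $\mathrm{length}_\varLambda(J/J^2) \geqslant \mathrm{length}_\varLambda(\varLambda/\eta_T) = 1$. Combining the two yields equality $\mathrm{length}_\varLambda(J/J^2) = 1 = \mathrm{length}_\varLambda(\varLambda/\eta_T)$, whereupon the equality clause of Theorem \ref{numerical criterion} concludes that $\varphi$ is an isomorphism of local complete intersections over $\varLambda$.

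The substantive content, namely the Galois-cohomological upper bound on $J/J^2$, has already been carried out in Proposition \ref{J} via the injection $\Hom_\varLambda(J/J^2, \varLambda/(X^n)) \hookrightarrow \ker(\mathrm{Z}^1(G_\Q^{Np}, \Phi_n^{-1}) \to \mathrm{Z}^1(\Q_p, \Phi_n^{-1}))$ together with Proposition \ref{def-red-rho}(ii); similarly the computation of $\eta_T$ rests on the $\cL$-invariant non-vanishing used in the proof of Proposition \ref{e=1}. So the present theorem is a clean assembly step rather than a new calculation, and the only potential subtlety to double-check is that the numerical criterion applies verbatim in the equi-characteristic setting where $\varLambda = \bar\Q_p\lsem X \rsem$; this is standard (Lenstra's formulation only requires $\varLambda$ to be a discrete valuation ring and $T$ to be a finite flat $\varLambda$-algebra).
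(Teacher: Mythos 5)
Your proposal is correct and follows exactly the paper's argument: apply Lenstra's numerical criterion (Theorem \ref{numerical criterion}) to the surjection \eqref{RarrowT} and $\pi_{\eis}$, using $\eta_T=(X)$ (from Lemma \ref{lem:cong-module} and Proposition \ref{e=1}) for the lower bound and Proposition \ref{J} for the matching upper bound on $\mathrm{length}_\varLambda(J/J^2)$. The paper's proof is just the one-line version of this assembly, so you have reconstructed it faithfully.
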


\begin{proof} The claim is a direct consequence of  Theorem~\ref{numerical criterion} in view of Proposition~\ref{J}.\end{proof}

\section{Local structure of the eigencurve at \texorpdfstring{$f$}{}}
In this section we will first complete the proof of Theorem~\ref{main-thm}, then we will prove Theorem~\ref{qexpansionoverc} using the methods of \cite{DLR3} and \cite{DLR4}.
We recall that we have two Eisenstein families $\cE_{\mathbf{1},\phi}$ and $\cE_{\phi,\mathbf{1}}$, as well as a 
cuspidal family $\cF$ (see Corollary~\ref{unique-cusp-family})  containing $f$. We also recall that  $\varLambda=\bar\Q_p\lsem X \rsem $ denotes the universal deformation ring of $\phi=\det(\rho)$.

\subsection{Failure of  Gorensteinness  of \texorpdfstring{$\cC$}{} at \texorpdfstring{$f$}{}}
 By Theorem~\ref{isom-cusp} and Lemma
\ref{Reis=A} the  structural homomorphisms 
$\varLambda\xrightarrow{\sim} \cT_{\cusp}$, $\varLambda\xrightarrow{\sim} \cT^{\eis}_{\rho}$ and $ \varLambda\xrightarrow{\sim} \cT^{\eis}_{\rho'}$ are  isomorphisms. 
 Since the eigencurve $\cC$ is reduced, it results from the above discussion a canonical  inclusion of local $\varLambda$-algebras:
\begin{align}\label{injection}
\pi= (\pi_{\cusp},\pi^{\eis}_{\rho},\pi^{\eis}_{\rho'}):\cT \hookrightarrow \varLambda \times_{\bar\Q_p} \varLambda\times_{\bar\Q_p} \varLambda, 
\end{align}
where $\cT$ denotes the completed strict local ring of $\cC$  at $f$. 

Consider the sub-algebra
$\cT'$ of $\cT$ generated over $\varLambda$ by $T_\ell$, $\ell\nmid Np$. 

\begin{thm}\label{prod-fibr} The image of $\cT'$  under the natural inclusion \eqref{injection} is given by 
\begin{align}\label{eq:trace-image}
\left\{(a,b,c)\in \varLambda \times_{\bar\Q_p} \varLambda\times_{\bar\Q_p} \varLambda \Big{|} 
\left(\cL(\phi^{-1})+\cL(\phi)\right)a'(0)=\cL(\phi^{-1})b'(0)+\cL(\phi)c'(0)  \right\}.
\end{align}

Moreover, $\pi$ defined in  \eqref{injection} is   an isomorphism, $X \cT$ is an ideal of $\cT'$,  and 
 \[\cT/X\cT =(\cT'/X\cT)[U_p]/(U_p-1)^2.\] 
\end{thm}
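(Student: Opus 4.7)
The plan is to compute the image of each $T_\ell$, $\ell\nmid Np$, in the three factors and then to exhibit $U_p$ as providing the one missing tangent direction. On the Eisenstein components, \eqref{eq:eis-defn} and its analogue for $\rho'$ give $\pi^{\eis}_\rho(T_\ell)=1+\phi(\ell)\chi_{\cyc}(\Frob_\ell)$ and $\pi^{\eis}_{\rho'}(T_\ell)=\phi(\ell)+\chi_{\cyc}(\Frob_\ell)$; the derivatives at $X=0$ follow from \eqref{derivative-chi-cyc}. For the cuspidal factor, I would pull back the isomorphism $\kappa^\#:\varLambda\xrightarrow{\sim}\cT_{\cusp}$ of Theorem \ref{isom-cusp} to the tangent-space description in the proof of Proposition \ref{cuspidal tangent}: matching determinants forces $\lambda+\mu=1/\log_p(1+p^\nu)$, which together with the cuspidal constraint $\mu\cL(\phi^{-1})=\lambda\cL(\phi)$ pins down $(\lambda,\mu)$ in terms of the $\cL$-invariants and hence determines the derivative $a'(0)$ of $\pi_{\cusp}(T_\ell)$. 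A direct substitution then verifies the linear relation \eqref{eq:trace-image}, so the image of $\cT'$ is contained in the subring described there.

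For the reverse inclusion I would exploit the $\bar\Q$-linear independence of the $\log_p(\ell)$ from the proof of Proposition \ref{independence log} to show that the images $\pi(T_\ell-T_\ell(f))$, as $\ell$ varies, span the full two-dimensional tangent space cut out by \eqref{eq:trace-image} inside the three-dimensional tangent space of $\varLambda\times_{\bar\Q_p}\varLambda\times_{\bar\Q_p}\varLambda$. The subring defined by \eqref{eq:trace-image} is easily verified to be closed under multiplication and is a complete local noetherian $\varLambda$-algebra finite over $\varLambda$ with residue field $\bar\Q_p$, so topological Nakayama upgrades tangent-space surjectivity to equality.

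To bring in $U_p$, I would observe that both $\cE_{\mathbbm{1},\phi}$ and $\cE_{\phi,\mathbbm{1}}$ are the ordinary unit-root stabilizations of their underlying classical Eisenstein series, and since $\phi(p)=1$ their $U_p$-eigenvalue is identically $1$ across the family; so $\pi^{\eis}_\rho(U_p)=\pi^{\eis}_{\rho'}(U_p)=1$. On the cuspidal factor the proof of Proposition \ref{e=1} gives $\pi_{\cusp}(U_p-1)\equiv -\cL(\phi)\mu X\pmod{X^2}$, a unit multiple of $X$. Thus $\pi(U_p-1)=(w,0,0)$ with $w\in X\varLambda\setminus X^2\varLambda$, which is exactly the tangent direction missing from the image of $\cT'$. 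Combined with the previous step, this shows that $\cT'[U_p]\hookrightarrow\cT$ surjects onto $\varLambda\times_{\bar\Q_p}\varLambda\times_{\bar\Q_p}\varLambda$ at the tangent-space level, hence by topological Nakayama $\pi$ is an isomorphism and $\cT=\cT'[U_p]$. Finally, $\pi((U_p-1)^2)=(w^2,0,0)$ has vanishing first-order terms in all three factors, so it trivially satisfies \eqref{eq:trace-image} and therefore belongs to $\cT'$; this produces a quadratic relation $(U_p-1)^2=A$ with $A\in\cT'$, which is what the shorthand $\cT=\cT'[U_p]/(U_p-1)^2$ encodes.

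The main obstacle is coordinating the three different normalizations of the parameter $X$ on the three factors so that their tangent vectors can be compared against the single parameter on $\varLambda$ via $\kappa^\#$; this bookkeeping is what makes the particular combinations of $\cL$-invariants in \eqref{eq:trace-image} appear, and Proposition \ref{independence log} (Baker-Brumer) is precisely what guarantees that $\cL(\phi)+\cL(\phi^{-1})\neq 0$, so that the linear form is non-trivial and genuinely cuts out a codimension-one subring of the triple fibre product, producing the failure of Gorensteinness announced in Theorem \ref{main-thm}.
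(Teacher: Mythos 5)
Your proposal follows essentially the same route as the paper: both reduce to tangent-space computations, identify the key tangent vector $\xi(\Frob_\ell)=\bigl(\tfrac{\cL(\phi^{-1})+\cL(\phi)\phi^{-1}(\ell)}{\cL(\phi)+\cL(\phi^{-1})},\mathbbm{1},\phi^{-1}(\ell)\bigr)$ (the formula \eqref{eq:xi}), verify that it satisfies the linear relation, and treat $U_p-1$ as the missing third tangent direction. The one genuine variation is in establishing the reverse inclusion. The paper first proves $\pi(\cT')\supset(X^2\varLambda)^3$ by showing $\pi(\cT')$ surjects onto each of the three pairwise fibre products $\varLambda\times_{\bar\Q_p}\varLambda$ (relying on Lemma \ref{lem-surj} for two of them and on linear disjointness of $H$ and $\Q_\infty$ for the Eisenstein pair) and then multiplying elements such as $(0,b,X)\cdot(a,0,X)=(0,0,X^2)$. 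You instead observe that the ring $S$ cut out by \eqref{eq:trace-image} is a complete local $\varLambda$-algebra finite over $\varLambda$, compute that the images of $X$ and the $T_\ell-a_\ell(f)$ already span the two-dimensional cotangent space of $S$, and invoke topological Nakayama. That is a valid shortcut; it trades the pairwise-surjectivity lemmas for the burden of checking directly that $S$ is a subring.

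Two small but worth-fixing imprecisions. First, you appeal to ``the $\bar\Q$-linear independence of the $\log_p(\ell)$ from the proof of Proposition \ref{independence log}'' to get the spanning. Proposition \ref{independence log} establishes $\bar\Q$-linear independence of $\log_p$ of conjugates of a fixed $p$-unit of $H$, not of rational primes $\ell$, and in any case that is not what the spanning needs: the tangent vector for $T_\ell$ is a nonzero scalar times $\xi(\Frob_\ell)$, and these together with the diagonal $(1,1,1)$ (coming from $X\in\varLambda$) span the two-dimensional subspace as soon as $\phi^{-1}(\ell)$ takes two distinct values, i.e.\ because $\phi\neq\mathbbm{1}$. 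Second, Baker--Brumer (via Proposition \ref{independence log}) does not enter to make the linear form in \eqref{eq:trace-image} non-trivial --- the form is non-trivial as long as $\cL(\phi^{-1})\neq 0$, even if $\cL(\phi)+\cL(\phi^{-1})=0$; it enters to guarantee $\cL(\phi)+\cL(\phi^{-1})\neq 0$ so that the system $\lambda+\mu=1/\log_p(1+p^\nu)$, $\mu\cL(\phi^{-1})=\lambda\cL(\phi)$ has a unique solution, and hence that the coefficients $(\lambda,\mu)$ and the derivative of $\pi_{\cusp}(T_\ell)$ are well-defined. (Relatedly, the non-Gorensteinness of $\cT$ is a structural feature of $\varLambda\times_{\bar\Q_p}\varLambda\times_{\bar\Q_p}\varLambda$ itself and does not hinge on the non-vanishing $\cL(\phi)+\cL(\phi^{-1})\neq 0$.)
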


\begin{proof} We first show that $\pi(\cT')\supset (X^2\cdot \varLambda)^3$, by showing that
$\pi(\cT')$ surjects on each of the $3$ double products $\varLambda\times_{\bar\Q_p} \varLambda$ in  \eqref{injection}, 
and by observing that
$(0,b,X)\cdot(a,0,X)=(0,0,X^2)$, etc. 
The surjectivity of the homomorphism $\cR^{\ps}\to \cR^{\ord}_{\rho}$ established in the proof of 
Lemma~\ref{lem-surj}, implies the surjectivity of the composed map $\cT'\to \cT\to \cT^{\ord}_{\rho}$, {\it i.e.}
$(\pi_{\cusp},\pi^{\eis}_{\rho})$ is surjective, and similarly $(\pi_{\cusp},\pi^{\eis}_{\rho'})$ is 
surjective as well. As for the surjectivity of $(\pi^{\eis}_{\rho},\pi^{\eis}_{\rho'})$ it suffices to show that
the image of 
\[\tr(\rho_{\eis})-\tr (\rho'_{\eis})=(\phi-\mathbf{1})\cdot(\chi_{p}-\mathbf{1}):\G_\Q\to \varLambda,\]
contains an element of valuation $1$, where $\rho_{\eis}$ is the  Galois deformation of $\rho$ attached to 
 $\cE_{\mathbf{1},\phi}$ (see  \eqref{eq:eis-defn}), whereas $\rho'_{\eis}$ is the  Galois deformation of $\rho'$ attached to $\cE_{\phi,\mathbf{1}}$. This follows easily from the fact that
 the abelian extensions $H$ and $\Q_\infty$ of $\Q$ (which are the fixed fields of   $\ker(\phi)$ and 
 $\ker(\chi_{p})$, respectively) are linearly disjoint for ramification reasons.
 Hence $\pi(\cT')\supset (X^2\cdot \varLambda)^3$.

It follows that $\pi(\cT')$ and $\pi(\cT)$ are uniquely determined by their images in 
$\bar\Q_p[\epsilon]^3=(\varLambda/(X^2) )^3$, which we will now determine  using the tangent space computations from \S\ref{sec:tangent}. 

By  Chebotarev's density Theorem  $\cT'$ is generated over $\varLambda$ by the 
trace of $\rho_{\cT} = (\rho_{\cT_{\cusp}},\rho_{\eis}, \rho'_{\eis})$, hence the image of  $\pi(\cT')$ in 
$\bar\Q_p[\epsilon]^3$ is generated by $1$ and the image of the map $\xi :\G_\Q\to \bar\Q_p^3$ 
uniquely determined by 
\[
(\tr(\rho_{\cT,\epsilon})-\tr(\rho))=
(\det(\rho_{\cT,\epsilon})-\det(\rho))\cdot  \xi.
\]
Using  formulas \eqref{cusp-trace} and \eqref{system} for  $\rho_{\cT_{\cusp}}\mod{X^2}$, together with the formula \eqref{eq:eis-defn}  for $\rho_{\eis}$ yields:
 \begin{align}\label{eq:xi}
 \xi=\left( \frac{\cL(\phi^{-1})+\cL(\phi) \cdot \phi^{-1}}{\cL(\phi)+\cL(\phi^{-1})} , \mathbf{1}, \phi^{-1}     \right),
 \end{align}
which completes the proof of \eqref{eq:trace-image}. By \eqref{U_p equation1} the image of $U_p-1$ in $\bar\Q_p[\epsilon]^3$ 
belongs to $\bar\Q_p^\times(\epsilon,0,0)$, hence $\pi$ is an isomorphism. As  $\pi(X\cdot \gm_\cT)=(X^2\cdot \varLambda)^3$, one sees that  $X \cT$ is an ideal of $\cT'$, and the kernel of the natural surjective homomorphism $ (\cT'/X \cT)[U_p] \twoheadrightarrow   \cT/X \cT$ is generated by $(U_p-1)^2$.  
\end{proof}

\begin{rem}
 When $\phi$ is   quadratic,  the  relation in  \eqref{eq:trace-image} is given by the congruence 
  \[2a_\ell(\cF)\equiv a_\ell(\cE_{\mathbf{1}, \phi})+a_\ell(\cE_{\phi, \mathbf{1}})\pmod{X^2}\]  
  for all primes $\ell\nmid Np$. 
  This particular setting is a fruitful ground for arithmetic applications such as the proof by 
Bertolini,  Darmon and Venerucci  \cite{BeDaVe}  of a conjecture of Perrin-Riou relating the position of the  Kato class in the $\phi$-isotypic component of the Mordell-Weil group of an elliptic curve $E$ satisfying $L(E, \phi, 1)=0$   to a global point. 
Their work exploits the connection between that class and the generalised Kato class attached to $\phi$ seen as a genus character of its fixed  imaginary quadratic field, for which they prove  a formula mirroring  the above relation. 
\end{rem} 

\begin{rem}
 Using an algorithm based on methods of \cite{lauder}, 
A.~Lauder identified the linear relation \eqref{eq:xi} for  $\phi$ an odd sextic character of conductor $21$, and $p=13$ for which $\phi(13)=1$, which were used  in \cite[\S7.2]{DLR1} to provide a numerical evidence for  elliptic Stark points 
over cyclotomic fields.
\end{rem} 

 As a corollary of Theorem~\ref{prod-fibr} we investigate some ring theoretic properties of the
completed strict local ring $\cT$ of $\cC$  at $f$.  
 Examples of non-Gorenstein Hecke algebras abound in positive characteristic (see for example \cite{wang-wake}), but 
 seem to be  less common in characteristic zero.  

\begin{cor}\label{generalisedeigenspace}
\begin{enumerate}
\item The  ring $\cT$ is  Cohen--Macaulay, but  not Gorenstein. 
\item The local ring at $f$ of the fibre $\kappa^{-1}(\kappa(f))$ in $\cC$  has dimension  $3$ over $\bar\Q_p$.
\item The  ring $\cT'\simeq \varLambda[Y]/\left(Y(Y+\cL(\phi)X)(Y-\cL(\phi^{-1})X)\right)$ is  complete intersection.
\end{enumerate}
\end{cor}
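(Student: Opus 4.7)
The plan is to read off all three statements from the isomorphism
\[
\cT\xrightarrow{\sim}\varLambda\times_{\bar\Q_p}\varLambda\times_{\bar\Q_p}\varLambda
\]
supplied by Theorem \ref{prod-fibr}, together with the description \eqref{eq:linear-relation} of $\cT'$. First I would observe that this fibred product is $\varLambda$-free of rank $3$ with explicit basis $\{(1,1,1),(X,0,0),(0,X,0)\}$, since $(0,0,X)=X\cdot(1,1,1)-(X,0,0)-(0,X,0)$. Reducing modulo $\gm_\varLambda=(X)$ turns this into a basis $\{1,\epsilon_1,\epsilon_2\}$ of the artinian quotient satisfying $\epsilon_i\epsilon_j=0$ for all $i,j$, so that
\[
\cT/\gm_\varLambda\cT\simeq\bar\Q_p[\epsilon_1,\epsilon_2]/(\epsilon_1,\epsilon_2)^2,
\]
which is three-dimensional over $\bar\Q_p$; this proves (ii).

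For (i), the Cohen--Macaulay property is immediate: $\cT$ is a one-dimensional reduced noetherian local ring, hence has no embedded primes. Since $\cT$ is $\varLambda$-free, its Gorenstein property over $\varLambda$ is equivalent to Gorensteinness of the artinian fibre $\cT/\gm_\varLambda\cT$. But the socle of $\bar\Q_p[\epsilon_1,\epsilon_2]/(\epsilon_1,\epsilon_2)^2$ is the two-dimensional ideal $(\epsilon_1,\epsilon_2)$, so its type is $2$ and the ring is not Gorenstein.

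For (iii), I would produce the claimed isomorphism explicitly. Let $\xi:=(0,-\cL(\phi)X,\cL(\phi^{-1})X)\in\varLambda^3$; a direct check using \eqref{eq:linear-relation} shows $\xi\in\cT'$, the key identity being $\cL(\phi^{-1})\cdot(-\cL(\phi))+\cL(\phi)\cdot\cL(\phi^{-1})=0$. Since the three components of $\xi$ are the three roots $0,-\cL(\phi)X,\cL(\phi^{-1})X$ of the polynomial $P(Y):=Y(Y+\cL(\phi)X)(Y-\cL(\phi^{-1})X)$, component-wise evaluation gives $P(\xi)=0$, and $Y\mapsto\xi$ descends to a $\varLambda$-algebra homomorphism
\[
\bar\varphi:\varLambda[Y]/(P(Y))\longrightarrow\cT'.
\]
Both source and target are $\varLambda$-free of rank $3$ (the target because $X\cT\subset\cT'$, as one verifies directly, so $\cT'$ agrees with $\cT$ after inverting $X$), and generic injectivity of $\bar\varphi$ follows from the distinctness over $\mathrm{Frac}(\varLambda)$ of the three roots of $P$, a consequence of the non-vanishing of $\cL(\phi)$, $\cL(\phi^{-1})$ and $\cL(\phi)+\cL(\phi^{-1})$ guaranteed by Proposition \ref{independence log}.

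The one point requiring more than bookkeeping will be surjectivity of $\bar\varphi$, which I plan to establish by reducing modulo $X$ and verifying that the images of $1,Y,Y^2$ remain $\bar\Q_p$-linearly independent in $\cT'/X\cT'$; Nakayama's lemma then concludes. The computation hinges on the observations that $(0,-\cL(\phi),\cL(\phi^{-1}))$ does not lie in $\cT$ and that $(0,\cL(\phi)^2 X,\cL(\phi^{-1})^2 X)$ violates the $\cT'$-relation, so that neither $Y$ nor $Y^2$ belongs to $X\cT'$; the underlying non-vanishing is once again $\cL(\phi)+\cL(\phi^{-1})\neq 0$. Once $\bar\varphi$ is proved to be an isomorphism, the fact that $\cT'$ is a relative complete intersection over $\varLambda$ is precisely the statement that it is presented by a single polynomial in one variable over $\varLambda$.
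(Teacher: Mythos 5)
Your proof is correct, and it departs from the paper's argument in two instructive places.

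For the failure of Gorensteinness in (i), the paper appeals to \cite[Cor.21.20]{Eis} to reduce the question to showing $\cT$ is not a complete intersection (legitimate here because the embedding codimension of $\cT$ is $2$), and then computes the first Koszul homology of the presentation $\cT\simeq\bar\Q_p\lsem X_1,X_2,X_3\rsem/(X_1X_2,X_1X_3,X_2X_3)$. You instead invoke the ascent--descent of Gorensteinness along a flat local morphism: since $\varLambda$ is regular, $\cT$ is Gorenstein if and only if its Artinian fibre $\cT/\gm_\varLambda\cT\simeq\bar\Q_p[\epsilon_1,\epsilon_2]/(\epsilon_1,\epsilon_2)^2$ is, and that ring has a two-dimensional socle. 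Your socle computation is more elementary and arguably more transparent than the Koszul argument. (Your Cohen--Macaulay observation --- reduced, one-dimensional, local, hence $\gm_\cT$ is not an associated prime --- is also fine; the paper instead exhibits the regular element $(X,X,X)$, which amounts to the same thing.)

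For (iii), the paper argues from the cotangent space: $\gm_{\cT'}/\gm_{\cT'}^2$ is two-dimensional, so $\cT'$ is a quotient of the factorial ring $\bar\Q_p\lsem X,Y\rsem$; since $\cT'$ is reduced and equidimensional of dimension $1$, the kernel is an intersection of height-one primes in a UFD and hence principal, and the product $Y(Y+\cL(\phi)X)(Y-\cL(\phi^{-1})X)$ is identified as a generator. You instead construct the explicit $\varLambda$-algebra map $\bar\varphi:\varLambda[Y]/(P(Y))\to\cT'$, $Y\mapsto\xi=(0,-\cL(\phi)X,\cL(\phi^{-1})X)$, verify $P(\xi)=0$ componentwise, and compare ranks. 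Your surjectivity step deserves a sentence of care: after showing $\alpha+\beta\xi+\gamma\xi^2\in X\cT'$ forces $\alpha=\beta=\gamma=0$ (first component kills $\alpha$; the fibre-product condition applied to $(\beta\xi+\gamma\xi^2)/X$ kills $\beta$; the relation \eqref{eq:trace-image} applied to $\gamma\xi^2/X$ kills $\gamma$, using $\cL(\phi)\cL(\phi^{-1})(\cL(\phi)+\cL(\phi^{-1}))\neq0$), you get that $1,\xi,\xi^2$ are linearly independent in the three-dimensional $\bar\Q_p$-space $\cT'/X\cT'$, hence span it, and Nakayama lifts this to surjectivity of $\bar\varphi$. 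Combined with $\varLambda$-freeness of rank $3$ on both sides and generic injectivity, $\bar\varphi$ is an isomorphism. This is a more hands-on, module-theoretic route than the paper's and has the virtue of producing the isomorphism explicitly rather than only abstractly.

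Part (ii) is handled essentially as in the paper; your exhibiting the basis $\{(1,1,1),(X,0,0),(0,X,0)\}$ is a nice concretisation of the paper's appeal to generic rank.
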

\begin{proof}
(i)  The depth of $\cT$ cannot exceed its  Krull dimension which  is $1$. Since
$X=(X,X,X)\in \cT$ is a regular element,  the depth of $\cT$ is $1$ and, in particular the local ring $\cT$ is Cohen--Macaulay. 
By definition, $\cT$ is  Gorenstein if and only if its Artinian quotient $\cT/X\cT$ is  Gorenstein which is equivalent to its socle 
$\Hom_{\cT}(\bar\Q_p, \cT/X\cT)$ having dimension $1$ (see \cite[\S21.2, \S21.3]{eisenbud}). Since 
\[\cT\simeq \varLambda \times_{\bar\Q_p} \varLambda\times_{\bar\Q_p} \varLambda 
\simeq \bar{\Q}_p \lsem X_1,X_2,X_3 \rsem /(X_1X_2,X_1X_3,X_2X_3),\] 
where $X_1=(X,0,0)$, $X_2=(0,X,0)$ and  $X_3=(0,0,X)$, it follows that 
\[\cT/X\cT\simeq \bar{\Q}_p \lsem X_1,X_2 \rsem /(X_1^2,X_1X_2,X_2^2),\] 
hence its socle is $2$-dimensional. 

(ii) It follows from (i) that $\cT/\gm_{\varLambda}\cT=\cT/X\cT$ is a $\bar\Q_p$-vector space of dimension $3$.

(iii) By \eqref{eq:trace-image},    $\cT'$ is equidimensional of dimension $1$ and has $2$-dimensional  tangent space  generated by $X$ and $Y=(0,-\cL(\phi)X, \cL(\phi^{-1})X)$.  It follows that  $\cT' $ is a quotient of the 
 factorial ring  $\bar\Q_p\lsem X,Y \rsem$ by the element  $Y(Y+\cL(\phi)X)(Y-\cL(\phi^{-1})X)$, 
  in particular,   $\cT'$ is a  complete intersection. 
\end{proof}

\subsection{Duality for non-cuspidal Hida families}\label{sec:duality}
 In  Corollary~\ref{unique-cusp-family} we showed that $S^{\dag}_{\gm_f}=\varLambda\cdot \cF $. 
We will now exhibit  a basis of the free rank $3$ $\varLambda$-module $M^{\dag}_{\gm_{f}}$ (see Corollary~\ref{generalisedeigenspace}(ii)).

\begin{prop}\label{basishidafamilies} 
\begin{enumerate} 
\item The localisation of   $C_\varLambda$ at $\gm_f$  is free of rank $2$ over $\varLambda$.

\item The elements $E_{\mathbf{1},\phi}= (\cF-\cE_{\mathbf{1},\phi})/X$ and $E_{\phi,\mathbf{1}}=(\cF- \cE_{\phi,\mathbf{1}})/X$ are in $M^{\dag}_{\gm_{f}}$ and generate a complement of $S^{\dag}_{\gm_{f}}$, {\it i.e.} $M^{\dag}_{\gm_f}=\varLambda\cdot \cF\oplus\varLambda\cdot E_{\mathbf{1},\phi}\oplus\varLambda\cdot E_{\phi,\mathbf{1}}$. 
\end{enumerate}
\end{prop}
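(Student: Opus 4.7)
The plan is to exploit the localisation at $\gm_f$ of the ordinary-residue sequence \eqref{residueniveau local} from Corollary \ref{residuemorphismord},
\[
0 \to S^{\dag}_{\gm_f} \to M^{\dag}_{\gm_f} \xrightarrow{\res_{\gm_f}} \cC_{\varLambda,\gm_f} \to 0,
\]
together with the identification $S^{\dag}_{\gm_f}=\varLambda\cdot \cF$ from Corollary \ref{perfect-duality}.

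For (i), observe that $\cC_\varLambda$ is a $\varLambda$-direct factor of the finitely generated free module $\prod_{\delta\in D}\varLambda$, hence finitely generated projective over $\varLambda$; localising at $\gm_f$ and using that $\varLambda$ is local yields that $\cC_{\varLambda,\gm_f}$ is free. To pin down its rank, I would first show that $M^{\dag}_{\gm_f}$ is free of rank $3$ over $\varLambda$. This should be established by extending the argument of Corollary \ref{perfect-duality} to the full Hecke algebra: the $\varLambda$-bilinear pairing
\[
\cT \times M^{\dag}_{\gm_f} \to \varLambda,\qquad (T,g)\mapsto a_1(T\cdot g),
\]
is perfect, and combined with the freeness of $\cT$ of rank $3$ over $\varLambda$ (Corollary \ref{generalisedeigenspace}(ii)) this forces $M^{\dag}_{\gm_f}$ to be free of the same rank. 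The exact sequence then gives rank $3-1=2$ for $\cC_{\varLambda,\gm_f}$.

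For (ii), the $\varLambda$-flatness of $M^{\dag}_{\gm_f}$ implies it is $X$-torsion free, and the fact (noted in the proof of Proposition \ref{ordL_p}) that the three families $\cF, \cE_{\mathbbm{1},\phi}, \cE_{\phi,\mathbbm{1}}$ all reduce to $f$ modulo $\gm_\varLambda$ implies that $\cE_{\mathbbm{1},\phi}-\cF$ and $\cE_{\phi,\mathbbm{1}}-\cF$ are divisible by $X$ in $M^{\dag}_{\gm_f}$, so that $E_{\mathbbm{1},\phi}$ and $E_{\phi,\mathbbm{1}}$ are well-defined elements of $M^{\dag}_{\gm_f}$. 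To show that $\{\cF, E_{\mathbbm{1},\phi}, E_{\phi,\mathbbm{1}}\}$ is a $\varLambda$-basis, by Nakayama's lemma applied to the free rank-$3$ module $M^{\dag}_{\gm_f}$ it suffices to check that their reductions modulo $\gm_\varLambda$ are linearly independent in $M^{\dag}\lsem f\rsem$. For this I would use the residues at the cusps $0$ and $\infty$: combining Proposition \ref{q-expa family}(i) with the fact (Proposition \ref{ordL_p}) that $\zeta_\phi(X)$ vanishes exactly to order $1$ at $X=0$ yields $\res_\infty(E_{\mathbbm{1},\phi})\bmod X \in \bar\Q_p^\times$ while $\res_0(E_{\mathbbm{1},\phi})\bmod X=0$; Proposition \ref{q-expa family}(ii) gives the symmetric conclusion for $E_{\phi,\mathbbm{1}}$; and $\res(\cF)=0$ by cuspidality. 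These three distinct residue profiles at $\{0,\infty\}$ furnish the required linear independence.

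The main obstacle is the freeness of rank $3$ for $M^{\dag}_{\gm_f}$, i.e.\ the perfectness of the $a_1$-pairing in the non-cuspidal setting. Unlike the purely cuspidal case, vanishing of $a_n(g)$ for all $n\geqslant 1$ does not directly force $g=0$, and one must additionally invoke the behaviour of the constant terms at the cusps, through the residue sequence and the explicit constant terms of $\cE_{\mathbbm{1},\phi}$ and $\cE_{\phi,\mathbbm{1}}$ given in Proposition \ref{q-expa family}, to rule out pairs $(T,g)$ with $a_1(Tg)$ identically zero.
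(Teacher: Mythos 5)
Your part (ii) is essentially the paper's argument: both establish well-definedness of $E_{\mathbbm{1},\phi}$ and $E_{\phi,\mathbbm{1}}$ via divisibility by $X$ in the free module $M^{\dag}_{\gm_f}$, then use the residue profile at the cusps $0$ and $\infty$ from Propositions \ref{q-expa family} and \ref{ordL_p} (one residue a unit, the other zero, with roles reversed) to obtain linear independence.

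Your part (i), however, has a gap. You want to deduce that $M^{\dag}_{\gm_f}$ is free of rank $3$ from the \emph{perfectness} of the pairing $\cT\times M^{\dag}_{\gm_f}\to\varLambda$ and the rank-$3$ freeness of $\cT$, and then set $\mathrm{rank}\,\cC_{\varLambda,\gm_f}=3-1$. But in the paper's logical order, perfectness of that pairing is Proposition \ref{perfect-duality-totale}, which is proved \emph{after} and \emph{by means of} Proposition \ref{basishidafamilies}: the matrix of the pairing is computed in the basis $(\cF,E_{\mathbbm{1},\phi},E_{\phi,\mathbbm{1}})$. If you intend to re-derive perfectness independently, the fix you propose --- ``rule out pairs $(T,g)$ with $a_1(Tg)$ identically zero'' --- only gives \emph{non-degeneracy}, i.e.\ injectivity of $M^{\dag}_{\gm_f}\hookrightarrow\Hom_\varLambda(\cT,\varLambda)$, hence $\mathrm{rank}\leqslant 3$. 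Over the discrete valuation ring $\varLambda$, non-degeneracy does not imply perfectness, so this does not force rank exactly $3$. To close the gap you still need the lower bound $\mathrm{rank}\,M^{\dag}_{\gm_f}\geqslant 3$, which in turn requires exhibiting three $\varLambda$-independent elements; once you do that (say by observing that $\cF$, $\cE_{\mathbbm{1},\phi}$, $\cE_{\phi,\mathbbm{1}}$ have pairwise distinct eigensystems after extending scalars to $\mathrm{Frac}(\varLambda)$), the detour through $\cT$ becomes superfluous. This is essentially what the paper does, but more directly: it computes $\mathrm{rank}\,\cC_{\varLambda,\gm_f}$ as the number of Eisenstein families specialising to $f$, read off from the constant-term computations of Proposition \ref{q-expa family} after passing to the generic fibre, without ever invoking the $\cT$-module structure of $M^{\dag}_{\gm_f}$.
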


\begin{proof}
(i)  The freeness follows from  \eqref{residueniveau local} and the rank is $2$ as it is given  by the number of Eisenstein families containing  $f$. 

(ii)  Since all three families $\cF$, $\cE_{\mathbf{1},\phi }$ and $\cE_{\phi,\mathbf{1}}$ have coefficients in $\varLambda$ and specialise to $f$ in weight $\kappa(f)$, it follows  that both $E_{\mathbf{1},\phi}$ and $E_{\phi,\mathbf{1}}$ belong to $M^{\dag}_{\gm_{f}}$. By \eqref{residueniveau local} it suffices to show that $\res_\varLambda(E_{\mathbf{1},\phi})$ and
$\res_\varLambda(E_{\phi,\mathbf{1}})$ form a basis of  the localisation of   $C_\varLambda$ at $\gm_f$. 
This follows from Propositions~\ref{q-expa family}~and~\ref{ordL_p}, according to which  the constant term of $E_{\mathbf{1},\phi}$  at the cusp  $\infty$ belongs to $\varLambda^{\times}$ and vanishes at $0$, while $E_{\phi,\mathbf{1}}$ vanishes at  $\infty$, but belongs to  $\varLambda^{\times}$ at $0$.  \qedhere
 \end{proof}

 \begin{prop}\label{perfect-duality-totale} One has $\cT^{\full} \simeq \cT$  and the natural
 $\varLambda$-bilinear pairing:
 \[ \cT \times M^{\dag}_{\gm_f}\to \varLambda, \ \ (T, \cG)\mapsto a_1(T\cdot \cG)\]
is a  perfect duality. Its restriction to  $\cT^{\ord}_\rho \times (\varLambda\cdot \cF\oplus\varLambda\cdot E_{\mathbf{1},\phi})$ 
is a  perfect duality as well.
\end{prop}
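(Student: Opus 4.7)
The plan is to first establish $\cT^{\full}\simeq \cT$ by showing that the extra Hecke operators $U_\ell$ for $\ell\mid N$ already lie in $\cT$. For the cuspidal component this is exactly Proposition \ref{monodromy}(ii), which gives $U_\ell \in \cT_{\cusp}\simeq \varLambda$. For the two Eisenstein components the operator $U_\ell$ acts on $\cE_{\mathbbm{1},\phi}$ (resp.\ $\cE_{\phi,\mathbbm{1}}$) by multiplication by an element of $\cT^{\eis}_\rho\simeq \varLambda$ (resp.\ $\cT^{\eis}_{\rho'}\simeq \varLambda$), namely the $U_\ell$-eigenvalue of that Eisenstein family. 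All three values agree modulo $X$ with the $U_\ell$-eigenvalue of $f$, so they constitute a compatible triple in $\varLambda\times_{\bar\Q_p}\varLambda\times_{\bar\Q_p}\varLambda$, which equals $\cT$ by Theorem \ref{prod-fibr}.

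For the duality itself, I observe that both $\cT$ and $M^{\dag}_{\gm_f}$ are free $\varLambda$-modules of rank $3$: the former by Corollary \ref{generalisedeigenspace}(ii), and the latter by Proposition \ref{basishidafamilies}(ii), which provides the basis $\{\cF, E_{\mathbbm{1},\phi}, E_{\phi,\mathbbm{1}}\}$. It therefore suffices to exhibit inside $\cT$ the dual basis of this generating set. The key observation is that under the embedding \eqref{injection} an element $T=(a,b,c)\in \cT\hookrightarrow \varLambda^3$ acts as multiplication by $a$, $b$, $c$ on the Hecke eigenforms $\cF$, $\cE_{\mathbbm{1},\phi}$, $\cE_{\phi,\mathbbm{1}}$ respectively; combined with the normalisation $a_1(\cF)=a_1(\cE_{\mathbbm{1},\phi})=a_1(\cE_{\phi,\mathbbm{1}})=1$, this yields $\langle T,\cF\rangle=a$, $\langle T,\cE_{\mathbbm{1},\phi}\rangle=b$, $\langle T,\cE_{\phi,\mathbbm{1}}\rangle=c$.

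Expanding $E_{\mathbbm{1},\phi}=(\cE_{\mathbbm{1},\phi}-\cF)/X$ and $E_{\phi,\mathbbm{1}}=(\cE_{\phi,\mathbbm{1}}-\cF)/X$, a short calculation gives $\langle T, E_{\mathbbm{1},\phi}\rangle=(b-a)/X$ and $\langle T, E_{\phi,\mathbbm{1}}\rangle=(c-a)/X$; these are integral in $\varLambda$ precisely because of the fibered-product condition defining $\cT$. Reading off the dual basis of $\{\cF, E_{\mathbbm{1},\phi}, E_{\phi,\mathbbm{1}}\}$ one finds the triples $(1,1,1)$, $(0,X,0)$, $(0,0,X)$, all of which satisfy the fibered-product condition and hence lie in $\cT$. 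Thus the induced $\varLambda$-linear map $\cT\to \Hom_\varLambda(M^{\dag}_{\gm_f},\varLambda)$ is an isomorphism between free rank-$3$ modules. The restriction to $\cT^{\ord}_\rho \times (\varLambda\cdot \cF \oplus \varLambda \cdot E_{\mathbbm{1},\phi})$ is handled by the identical method, using $\cT^{\ord}_\rho\simeq \varLambda\times_{\bar\Q_p}\varLambda$ from Proposition \ref{e=1} in place of the triple product and producing the dual basis $\{(1,1),(0,X)\}$ inside $\cT^{\ord}_\rho$. The main subtlety is that the three families $\cF,\cE_{\mathbbm{1},\phi},\cE_{\phi,\mathbbm{1}}$ do not themselves form a $\varLambda$-basis of $M^{\dag}_{\gm_f}$, which is precisely why one must first pass to the regularised basis of Proposition \ref{basishidafamilies}(ii) before being able to read off the dual triples.
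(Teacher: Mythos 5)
Your proof is correct and follows essentially the same route as the paper. For $\cT^{\full}\simeq\cT$ you invoke Proposition \ref{monodromy}(ii) for the cuspidal component and observe that the Eisenstein $U_\ell$-eigenvalues are compatible mod $X$ (the paper simply makes this explicit: $U_\ell(\cE_{\mathbbm{1},\phi})=1$ and $U_\ell(\cE_{\phi,\mathbbm{1}})=\chi_{\cyc}(\Frob_\ell)$). For the duality, your computation of the Gram matrix in the bases $(\cF, E_{\mathbbm{1},\phi}, E_{\phi,\mathbbm{1}})$ and $((1,1,1),(0,X,0),(0,0,X))$ — showing it is the identity and that these triples satisfy the fibred-product condition — is exactly what the paper does, and you even fix a typo in the published proof where $(0,0,X)$ appears as $(0,X,0)$. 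The handling of the restriction to $\cT^{\ord}_\rho$ via the two-fold fibred product $\varLambda\times_{\bar\Q_p}\varLambda$ likewise matches.
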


\begin{proof} One has 
 $\cT^{\full} \simeq \cT$,   as $\cT^{\full}_{\cusp} \simeq \cT_{\cusp}$ 
 by Proposition~\ref{monodromy}, and for all $\ell\mid N$ one has $U_\ell(\cE_{\mathbf{1},\phi})=1$ and
 $U_\ell(\cE_{\phi,\mathbf{1}})=\chi_{p}(\Frob_\ell)\in 1+\gm_\varLambda$ (see \eqref{eq:univ-cyc-modX2}). 
 The non-degeneracy of the pairing follows from the $q$-expansion principle, as 
 $a_n(\cG)=a_1(T_n\cdot \cG)=0$ for all  $n\in\Z_{\geqslant 1}$.   The isomorphisms 
\[\cT \simeq \cT_{\cusp} \times_{\bar\Q_p}  \cT^{\eis}_\rho \times_{\bar\Q_p}  \cT^{\eis}_{\rho'} \simeq \varLambda \times_{\bar\Q_p} \varLambda\times_{\bar\Q_p} \varLambda,\]
established in Theorem~\ref{prod-fibr}, implies that the matrix of the pairing  in the bases 
$(\mathcal{F},E_{\mathbf{1},\phi},E_{\phi,\mathbf{1}})$ and $((1,1,1),(0,X,0),(0,0,X))$ is the identity,  hence the pairing  is a perfect duality.
\end{proof}

\begin{rem} As  $\cT^{\ord}_\rho$ is Gorenstein and finite flat over the regular ring $\varLambda$, it follows that the $\cT^{\ord}_\rho$-module $\Hom_{\varLambda}(\cT^{\ord}_\rho,\varLambda) \simeq M^{\dag}_{\gm_f,\rho}$ is  free of rank $1$. On the other hand, 
$\cT$ being  not Gorenstein, the $\cT$-module  $\Hom_{\varLambda}(\cT,\varLambda) \simeq M^{\dag}_{\gm_f}$ is not free of rank $1$. 
\end{rem}

\subsection{Non-classical overconvergent weight \texorpdfstring{$1$}{} modular forms}\label{sec:ocmf} 
Recall the universal cyclotomic character $\chi_{p}:\G_\Q\to \varLambda^\times$ from \S\ref{iwasawa-coh}, where $\varLambda=\Q_p \lsem X \rsem$ is  isomorphic to   the completed strict local ring  of the weight space $\cW$ at  $\kappa(f)$. 
 We will next compute infinitesimally the  $q$-expansion of the 
unique cuspidal  family  $\cF=\sum_{n\geqslant 1} {a_n(\cF)}q^n \in \varLambda \lsem q \rsem$
containing $f$ (see Corollary~\ref{unique-cusp-family}). 

\begin{prop}
\label{cuspidal-q-expansion} One has 
\begin{align}
\label{derivatives cusp2}
\left.\tfrac{d}{dX}\right|_{X=0}a_p(\cF) &= \frac{\cL(\phi)\cL(\phi^{-1}) }{(\cL(\phi)+\cL(\phi^{-1}))\log_p(1+p^\nu)}, \text{ and } 
\\
\label{derivatives cusp}
\left.\tfrac{d}{dX}\right|_{X=0}a_\ell(\cF)& =
\frac{(\phi(\ell)\cL(\phi^{-1})+\cL(\phi))\log_p(\ell)}{(\cL(\phi)+\cL(\phi^{-1}))\log_p(1+p^\nu)}, \text{ for every prime  } \ell \neq p. 
 \end{align}
\end{prop}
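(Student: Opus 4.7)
The plan is to leverage the isomorphism $\cR_{\cusp} \xrightarrow{\sim} \cT_{\cusp} \xleftarrow{\sim} \varLambda$ established in Theorem \ref{isom-cusp}, so that reading off the first-order Fourier coefficients of $\cF$ at $X=0$ reduces to evaluating the trace and the Frobenius eigenvalue on the unramified quotient of the universal infinitesimal cuspidal deformation explicitly written down in the proof of Proposition \ref{cuspidal tangent}. Indeed, under this identification $a_\ell(\cF)$ is the image in $\varLambda$ of $T_\ell \in \cT_{\cusp}$, which by Proposition \ref{rhott} equals $\tr(\rho_{\cT_{\cusp}})(\Frob_\ell)$ for $\ell\nmid Np$, while $a_p(\cF)=U_p=\chi_{\cT_{\cusp}}(\Frob_p)$.

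First I would fix a generator $\epsilon$ of the one-dimensional tangent space $t_{\cusp}$, parameterized as in \eqref{system} by $(\lambda,\mu)\in\bar\Q_p^2$ satisfying the cuspidality relation $\mu\cL(\phi^{-1})=\lambda\cL(\phi)$. The crucial point is to normalize this generator so that under the isomorphism $\varLambda\xrightarrow{\sim}\cT_{\cusp}$ it corresponds to the weight-space coordinate $X$ modulo $X^2$. Since this isomorphism is induced by the determinant, and the universal deformation of $\phi$ to $\varLambda$ is $\phi\chi_{\cyc}$, comparing \eqref{cusp-trace}, which gives $\det(\rho_\epsilon)=\phi(1+\epsilon(\lambda+\mu)\eta_{\mathbbm{1}})$, with the congruence \eqref{eq:univ-cyc-modX2} for $\chi_{\cyc}$ forces $(\lambda+\mu)\log_p(1+p^\nu)=1$. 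Combined with the cuspidality relation, this determines
\begin{equation*}
\lambda=\frac{\cL(\phi^{-1})}{(\cL(\phi)+\cL(\phi^{-1}))\log_p(1+p^\nu)},\qquad \mu=\frac{\cL(\phi)}{(\cL(\phi)+\cL(\phi^{-1}))\log_p(1+p^\nu)},
\end{equation*}
where the denominator $\cL(\phi)+\cL(\phi^{-1})$ is nonzero by Proposition \ref{independence log} (and equals $2\cL(\phi)\neq 0$ in the quadratic case).

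Next I would plug these values into the tangent-space formulas from the proof of Proposition \ref{cuspidal tangent}. Equation \eqref{U_p equation1} reads $\chi_\epsilon(\Frob_p)=1-\mu\cL(\phi^{-1})\epsilon$, so $\tfrac{d}{dX}|_{X=0}a_p(\cF)=-\mu\cL(\phi^{-1})$, which is exactly \eqref{derivatives cusp2}. For $\ell\neq p$, formula \eqref{cusp-trace} together with $\eta_{\mathbbm{1}}(\Frob_\ell)=\log_p(\ell)$ yields
\begin{equation*}
\tr(\rho_\epsilon)(\Frob_\ell)=(1+\phi(\ell))+\epsilon(\lambda\phi(\ell)+\mu)\log_p(\ell),
\end{equation*}
whence \eqref{derivatives cusp} after substituting the normalized values of $\lambda$ and $\mu$.

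The main subtlety is the first step, namely pinning down the scalar that identifies the abstract tangent-space parameter $\epsilon$ with the weight-space coordinate $X$; everything downstream is straightforward substitution. This scalar is rigidly determined by the determinant condition, because $t_{\cusp}$ is one-dimensional and the weight map is an isomorphism, so imposing $\det(\rho_\epsilon)\equiv\phi\chi_{\cyc}\pmod{\epsilon^2}$ uniquely fixes $(\lambda,\mu)$, and with it the derivatives.
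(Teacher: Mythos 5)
Your argument for $a_p(\cF)$ and for $a_\ell(\cF)$ with $\ell\nmid Np$ is correct and is in substance the paper's own proof: both rest on normalising the tangent vector of $t_{\cusp}$ by the determinant condition (so that it corresponds to the weight coordinate $X$), deducing the explicit values of $\lambda$ and $\mu$, and reading off the derivatives from \eqref{cusp-trace} and \eqref{U_p equation1}. The only stylistic difference is that the paper packages the trace derivative into the formula \eqref{eq:xi} from the proof of Theorem \ref{prod-fibr}, which is derived from the same computation.

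There is, however, a genuine gap at primes $\ell\mid N$. You correctly note at the start that the identity $a_\ell(\cF)=\tr(\rho_{\cT_{\cusp}})(\Frob_\ell)$ is only available for $\ell\nmid Np$, but you then apply the trace formula \eqref{cusp-trace} for all $\ell\neq p$. For $\ell\mid N$ the Galois representation $\rho$ is ramified at $\ell$ (since $\phi$ is), so $\phi(\Frob_\ell)$ and hence $\tr(\rho_\epsilon)(\Frob_\ell)$ is not even well-defined; the convention $\phi(\ell)=0$ from the Dirichlet side happens to produce the correct number, but it is not a trace of Frobenius. What actually determines $a_\ell(\cF)$ for $\ell\mid N$ is the eigenvalue of $U_\ell$, which by Proposition \ref{monodromy} is the action of $\Frob_\ell$ on the rank one module $\cM^{I_\ell}$. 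Computing the first-order deformation of that eigenvalue from the infinitesimal matrix in the proof of Proposition \ref{cuspidal tangent} gives $\mu\,\eta_{\mathbbm 1}(\Frob_\ell)=\mu\log_p(\ell)$, which agrees with \eqref{derivatives cusp} once $\phi(\ell)$ is set to $0$, so your final formula is correct but the reasoning for these primes needs to be replaced by the appeal to Proposition \ref{monodromy}.
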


\begin{proof} Let  $\ell \nmid N$ be a prime. Reading the first component of \eqref{eq:xi} yields: 
\[\left.\tfrac{d}{dX}\right|_{X=0}a_\ell(\cF)=\left.\tfrac{d}{dX}\right|_{X=0}\tr(\rho_{\cT_{\cusp}})(\Frob_\ell)=
 \tfrac{\cL(\phi^{-1})+\cL(\phi) \phi^{-1}(\ell)}{\cL(\phi)+\cL(\phi^{-1})}\left.\tfrac{d}{dX}\right|_{X=0}\det(\rho_{\cT_{\cusp}})(\Frob_\ell).
 \]
As 
 $\det(\rho_{\cT_{\cusp}})=\phi\chi_{p}$ and $\left.\tfrac{d}{dX}\right|_{X=0} \chi_{p}(\Frob_\ell)= \frac{\log_p(\ell)}{\log_p(1+p^\nu)}$ by \eqref{derivative-chi-cyc},  we obtain  \eqref{derivatives cusp} for $\ell \nmid N$. 

In order to compute $\left.\tfrac{d}{dX}\right|_{X=0}a_p(\cF)=\left.\tfrac{d}{dX}\right|_{X=0}\chi_{\cT_{\cusp}}(\Frob_p)$ we go back to  the proof of Proposition~\ref{cuspidal tangent}.
Comparing  $\det (\rho_{\epsilon})=\phi(1+\epsilon (\lambda+\mu) \eta_\mathbf{1}) $ from  
\eqref{cusp-trace} with \eqref{derivative-chi-cyc} gives  $\lambda+\mu =\frac{-1}{\log_p(1+p^\nu)}$. 
Combining this with $ \mu \cL(\phi^{-1}) =\lambda \cL(\phi)$ from  \eqref{system} we obtain 
\begin{align}\label{eq:lambda-mu}
\lambda=-\frac{\cL(\phi^{-1})}{(\cL(\phi)+\cL(\phi^{-1}))\log_p(1+p^\nu)} \quad \text{ and } \quad
\mu=-\frac{\cL(\phi)}{(\cL(\phi)+\cL(\phi^{-1}))\log_p(1+p^\nu)}. 
\end{align}
 By \eqref{U_p equation1} one finds  $\left.\tfrac{d}{dX}\right|_{X=0}\chi_{\cT_{\cusp}}(\Frob_p)=-\mu \cL(\phi^{-1})= \tfrac{\cL(\phi)\cL(\phi^{-1})}{(\cL(\phi)+\cL(\phi^{-1}))\log_p(1+p^\nu)} $ as claimed.
 It remains to compute the derivative of $a_\ell(\cF)$ for primes $\ell | N$. From Proposition~\ref{monodromy}, $a_\ell(\cF)$ is given by the action of $\Frob_\ell$ on the $ (\rho_{\cT_{\cusp}})^{\I_{\Q_\ell}}$. By the proof of Proposition~\ref{cuspidal tangent} and \eqref{eq:lambda-mu} we get 
 \[
\left. \tfrac{d}{dX}\right|_{X=0}a_\ell(\cF)=\mu \eta_\mathbf{1}(\Frob_\ell)= \frac{ \cL(\phi) \log_p(\ell)}{(\cL(\phi)+\cL(\phi^{-1}))\log_p(1+p^\nu)},
 \]
 yielding \eqref{derivatives cusp} also for $\ell \mid N$, as in this case $\phi(\ell)=0$. 
 \end{proof}

Let $M^{\ord}_{\kappa(f)}$ be the space of ordinary overconvergent $p$-adic modular forms of weight $1$ and central character $\phi$. The eigenform $f$ corresponds to a maximal  ideal $\gm_f$ of the Hecke algebra   $\cT_{\kappa(f)}$ acting on $M^{\ord}_{\kappa(f)}$.    For  $i \geqslant 1$, let    $M^{\dag}[\gm_f^i]$ denote   the subspace of 
$M^{\ord}_{\kappa(f)}$  annihilated by $\gm_f^i$.

\begin{proof}[\bf Proof of Theorem~\ref{qexpansionoverc}]  
By definition the generalised eigenspace at $f$  is given by  the $\cT/\gm_\varLambda\cT$-module
$M^{\dag}_{\gm_f}/ \gm_\varLambda M^{\dag}_{\gm_f}$. 
By Theorem~\ref{prod-fibr}
 $\gm_f^2\cT \subset \gm_\varLambda\cT$, hence 
$M^{\dag} \lsem f \rsem=M^{\dag}[\gm_f^2]$. Note that by Proposition~\ref{perfect-duality-totale} one already knows that 
$\dim_{\bar\Q_p} M^{\dag}[\gm_f^2]= \dim_{\bar\Q_p} (\cT/\gm_f^2\cT)=3$.
By Proposition~\ref{basishidafamilies},  the specialisations of the families  $E_{\phi,\mathbf{1}}(X)$ and $E_{\mathbf{1},\phi}(X)$  at $X=0$  span a complement of  $S^{\dag}\lsem f \rsem$ in 
$M^{\dag}\lsem f \rsem$.   Consider 
 \begin{align}\label{defn-basis-ocmf}
\begin{split}
f^\dag_ {\mathbf{1},\phi}=&\tfrac{(\cL(\phi)+\cL(\phi^{-1}))\log_p(1+p^\nu)}{\cL(\phi)}  E_{\mathbf{1}, \phi}(0) 
  =\tfrac{(\cL(\phi)+\cL(\phi^{-1}))\log_p(1+p^\nu)}{\cL(\phi)}  \left.\tfrac{d}{dX}\right|_{X=0}\left(\cF-\cE_{\mathbf{1}, \phi}\right), \\
  f^\dag_{\phi, \mathbf{1}}=&\tfrac{(\cL(\phi)+\cL(\phi^{-1}))\log_p(1+p^\nu)}{\cL(\phi^{-1})}  E_{\phi, \mathbf{1}}(0) 
  =\tfrac{(\cL(\phi)+\cL(\phi^{-1}))\log_p(1+p^\nu)}{\cL(\phi^{-1})}  \left.\tfrac{d}{dX}\right|_{X=0}\left(\cF-\cE_{\phi,\mathbf{1}}\right).
\end{split}
\end{align}
 Since
  $a_p(\cE_{\mathbf{1},\phi})=a_p(\cE_{\phi,\mathbf{1}})=1$ 
  one finds that $ \left.\tfrac{d}{dX}\right|_{X=0}a_p(\cE_{\mathbf{1},\phi})=\left.\tfrac{d}{dX}\right|_{X=0}a_p(\cE_{\phi,\mathbf{1}})=0$.
  Further  
$a_\ell(\cE_{\mathbf{1},\phi})=1+\phi(\ell)\chi_{p}(\ell)$ and $a_\ell(\cE_{\phi,\mathbf{1}})=\phi(\ell)+\chi_{p}(\ell)$, together with  \eqref{derivative-chi-cyc} yields  that
\begin{align*}
\left.\tfrac{d}{dX}\right|_{X=0}a_\ell(\cE_{\mathbf{1},\phi})=\phi(\ell)\tfrac{\log_p(\ell)}{\log_p(1+p^\nu)}, \text{  and } \,\, 
 \left.\tfrac{d}{dX}\right|_{X=0}a_\ell(\cE_{\phi,\mathbf{1}})=\tfrac{\log_p(\ell)}{\log_p(1+p^\nu)}.
\end{align*}
Combining these formulas with  those given in Proposition~\ref{cuspidal-q-expansion}
 we obtain the desired formulas for the non-constant coefficients of $f^\dag_{\mathbf{1}, \phi}$ and $f^\dag_{ \phi, 1}$ defined in \eqref{defn-basis-ocmf}:  
\begin{align*}
a_p(f_{\mathbf{1}, \phi}^\dag)=\cL(\phi^{-1}), \, a_p(f_{\phi, \mathbf{1}}^\dag)=\cL(\phi) \text{ and }
a_\ell(f^\dag_{\mathbf{1},\phi})=(1-\phi(\ell))  \log_p(\ell)=-a_\ell(f^\dag_{\phi, \mathbf{1}}). 
\end{align*}

In order to compute  the remaining positive coefficients of  $f^\dag=f^\dag_{\mathbf{1}, \phi}$ or $f^\dag_{\phi,\mathbf{1}}$, we observe that since  
 $\cE_{\mathbf{1},\phi}$, $\cE_{\phi,\mathbf{1}}$ and  $\cF$  
are normalised eigenform for all Hecke operators $(T_n)_{n\geqslant 1}$ (see Proposition~\ref{monodromy} for $\cF$), the classical  relations between abstract Hecke operators imply:
\begin{align}
\label{coprime factors}
a_{mn}(f^\dag)&=a_m(f)a_n(f^\dag)+a_n(f)a_m(f^\dag),
 \text{ for   } (n,m)=1, \\
  \nonumber
 a_{\ell^r}(f^\dag)&= r a_\ell(f)^{r-1} a_{\ell}(f^\dag)=r a_{\ell}(f^\dag)  \text{ for  all primes } \ell \mid Np, r\geqslant 1, \text{ and }\\
 \nonumber a_{\ell^r}(f^\dag)&= a_\ell(f) a_{\ell^{r-1}}(f^\dag)+a_{\ell^{r-1}}(f)a_{\ell}(f^\dag) -
 \phi(\ell) a_{\ell^{r-2}}(f^\dag)  \text{ for   } \ell \nmid Np, r\geqslant 2.
\end{align}
 As $a_{\ell^r}(f)=\sum_{i=0}^r \phi(\ell)^i$ for all primes $\ell \neq p$, an induction on $r$ yields 
\[ \textstyle a_{\ell^r}(f^\dag)=a_{\ell}(f^\dag)\sum_{i=0}^{r}(i+1)(r-i)\phi(\ell)^i,  \text{ hence }\]
\[\textstyle a_{\ell^r}(f^\dag_{\mathbf{1},\phi})=\sum_{i=0}^{r}(r-2i)\phi(\ell^i)\log_p(\ell) =-a_{\ell^r}(f^\dag_{\phi, \mathbf{1}}), 
\text{ for  all primes } \ell \neq p.\]
Combining this with \eqref{coprime factors} yields the desired formulas for 
$a_n(f^\dag_{\mathbf{1}, \phi})$ and $a_n(f^\dag_{\phi, \mathbf{1}})$ for all $n\geqslant 1$. 

We have $a_0(f^\dag _{\phi, \mathbf{1}})=0$  because $a_0(\cE_{\phi, \mathbf{1}})$ is identically zero. 
To  compute  $a_0(f^\dag_{\mathbf{1}, \phi})$ we notice that by the above computations one has
$a_n\left(\frac{f^\dag_{\mathbf{1}, \phi}+f^\dag _{\phi, \mathbf{1}}}{\cL(\phi)+\cL(\phi^{-1})}\right)=a_n(E_1(\mathbf{1}, \phi)-f)$ for all $n\geqslant 1$, where $E_1(\mathbf{1}, \phi)$ is the classical Eisenstein series defined in \eqref{eq:def-eisenstein}. 
We then deduce from  Proposition~\ref{perfect-duality-totale} that 
$a_0\left(\frac{f^\dag_{\mathbf{1}, \phi}+f^\dag _{\phi, \mathbf{1}}}{\cL(\phi)+\cL(\phi^{-1})}\right)=a_0(E_1(\mathbf{1}, \phi)-f)$ as well, hence 
\begin{align} \label{first-formula}
a_0(f^\dag_{\mathbf{1}, \phi})=(\cL(\phi)+\cL(\phi^{-1}))\frac{L(\phi, 0)}{2}.      
\end{align} 
\end{proof}

\begin{cor}\label{Grossstarkcor}
$L'_p(\phi \omega_p, 0)=\cL(\phi)L(\phi, 0)$.
\end{cor}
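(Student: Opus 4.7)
The plan is to compute the constant term $a_0(f^\dag_{\mathbbm{1},\phi})$ in two independent ways and equate the resulting expressions to extract the desired derivative. Both computations use results already established earlier in the paper, so no new machinery is needed.

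On one hand, the identity \eqref{first-formula}, proved at the end of Theorem~\ref{qexpansionoverc} via the duality of Proposition~\ref{perfect-duality-totale} (which propagates the agreement of positive Fourier coefficients to the constant term), gives directly
\[ a_0(f^\dag_{\mathbbm{1},\phi}) \;=\; (\mathcal{L}(\phi)+\mathcal{L}(\phi^{-1}))\,\frac{L(\phi,0)}{2}. \]
On the other hand, unpacking the definition \eqref{defn-basis-ocmf} of $f^\dag_{\mathbbm{1},\phi}$ as a scalar multiple of the derivative along the Hida family, and using that $\cF$ is cuspidal (so its constant term at $\infty$ vanishes identically) together with Proposition~\ref{q-expa family}(i), which identifies $a_\infty(\cE_{\mathbbm{1},\phi})$ with $\tfrac{1}{2}\zeta_\phi(X)$, I obtain
\[ a_0(f^\dag_{\mathbbm{1},\phi}) \;=\; \frac{(\mathcal{L}(\phi)+\mathcal{L}(\phi^{-1}))\log_p(1+p^\nu)}{2\,\mathcal{L}(\phi)}\,\zeta_\phi'(0). \]

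Equating the two expressions cancels the factor $(\mathcal{L}(\phi)+\mathcal{L}(\phi^{-1}))/2$ and yields
\[ \zeta_\phi'(0) \;=\; \frac{\mathcal{L}(\phi)\,L(\phi,0)}{\log_p(1+p^\nu)}. \]
To finish, I translate this into the $s$-parameter using the interpolation identity \eqref{kubota-leopoldt}: writing $L_p(\phi\omega_p,s) = \zeta_\phi((1+p^\nu)^{-s}-1)$ near the trivial zero at $s=0$ and applying the chain rule produces a factor of $-\log_p(1+p^\nu)$, giving $L_p'(\phi\omega_p,0) = -\log_p(1+p^\nu)\cdot\zeta_\phi'(0) = -\mathcal{L}(\phi)L(\phi,0)$ as claimed.

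There is no genuine obstacle: the substantive work — the existence of a unique cuspidal family $\cF$ specialising to $f$, the explicit constant term of $\cE_{\mathbbm{1},\phi}$ in terms of $\zeta_\phi$, the duality of Proposition~\ref{perfect-duality-totale}, and the resulting Siegel--Weil type relation in \eqref{first-formula} — is already packaged inside Theorem~\ref{qexpansionoverc}. The only care required is the routine bookkeeping between the formal variable $X$ on $\varLambda$ and the cyclotomic variable $s$ of the Kubota--Leopoldt $L$-function.
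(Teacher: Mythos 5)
Your proposal is correct and follows essentially the same route as the paper: compute $a_0(f^\dag_{\mathbbm{1},\phi})$ once via the relation \eqref{first-formula} and once via \eqref{defn-basis-ocmf} together with Proposition~\ref{q-expa family}(i), then translate $\zeta_\phi'(0)$ into $L'_p(\phi\omega_p,0)$ through the Mellin-transform identity \eqref{kubota-leopoldt}.
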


\begin{proof} We will now compute $a_0(f^\dag_{\mathbf{1}, \phi})$ by a different method and obtain the 
formula for  the derivative of  the Kubota--Leopoldt $p$-adic $L$-function  (see \cite[(8)]{DDP}).  
By Proposition~\ref{q-expa family} one has $a_0(\cE_{\mathbf{1},\phi })=\tfrac{\zeta_\phi}{2}$. Since  by \eqref{Kubota--leopoldt} one has \[ \zeta_\phi((1+p^\nu)^{s-1}-1)=L_p(\phi \omega_p, 1-s),\]
taking derivatives  yields $\zeta_{\phi}'(0)=-\frac{L'_p(\phi \omega_p, 0) }{\log_p(1+p^\nu)}$. Using \eqref{defn-basis-ocmf} one finds: 
\begin{align*}
\tfrac{\cL(\phi)}{(\cL(\phi)+\cL(\phi^{-1}))}  a_0(f^\dag_{\mathbf{1}, \phi})=
-\log_p(1+p^\nu) \left.\tfrac{d}{dX}\right|_{X=0}a_0(\cE_{\mathbf{1},\phi })=
-\log_p(1+p^\nu) \tfrac{\zeta_{\phi}'(0)}{2}=\tfrac{L'_p(\phi \omega_p, 0)}{2}.
\end{align*}

From \eqref{first-formula} and  the latter we obtain 
$L'_p(\phi \omega_p, 0)=\cL(\phi)L(\phi, 0)$. 
\end{proof}

\begin{rem} \label{tame}
 In this closing remark we let  $p=5$,  $N=11$ and we denote by $\Delta$  the $5$-Sylow subgroup of  $(\Z/11\Z)^\times$.  
Mazur observed in \cite{mazur-eis} that the weight $2$ level $\Gamma_0(11)$ Eisenstein series is congruent modulo $5$ to 
the unique  weight $2$  cuspform $g$ of level  $\Gamma_0(11)$.   Merel \cite{merel} gave a numerical criterion for this  uniqueness  in terms of the non-vanishing of a tame derivative, in the sense of Mazur-Tate, of the zeta element  $\zeta_\Delta\in (\Z/5\Z)[\Delta]$  specialising  to $L(-1, \chi)$ for any  character  $\chi$ of $\Delta$ (considered as even Dirichlet character of conductor $11$). 
To draw a parallel with our current work, we consider the ordinary  $5$-stabilisation of $g$ which is congruent modulo  $5$ to each of the two 
$11$-stabilisations of the unique weight $2$ Eisenstein series of level $\Gamma_0(5)$. The Hecke $\Z_5$-algebra of level  $\Gamma_0(55)$ localised at the corresponding maximal ideal 
has structure analogous  to that of the  $\varLambda$-algebra $\cT$ described in Theorem~\ref{main-thm}(ii); in  
particular, it is not Gorenstein and   its cuspidal quotient is free of rank one over $\Z_5$. 
The analogy goes further, as the tame derivative   $\zeta'_\Delta=\sum_{\delta\in \Delta} a_\delta\log(\delta)[\delta]$  
evaluated at the $\chi=1$ does not vanish and its value differs from 
 $\zeta(-1)=\tfrac{1}{12}$ by a `tame $\cL$-invariant' given by $\log(4)$,  where 
$\log:\Delta\xrightarrow{\sim} \Z/5\Z$ is  a choice of  discrete  logarithm as in   \cite{lecouturier}. 
\end{rem}

\bibliographystyle{siam}

\end{document}